\newtheorem{theorem}{Theorem}[section]
\newtheorem{corollary}{Corollary}[section]
\newtheorem{lemma}{Lemma}[section]
\newtheorem{definition}{Definition}
\newtheorem*{remark}{Remark}
\newcommand{\bit}{\begin{itemize}}
\newcommand{\eit}{\end{itemize}}
\newcommand{\bd}{\begin{description}}
\newcommand{\ed}{\end{description}}
\newcommand{\ben}{\begin{enumerate}}
\newcommand{\een}{\end{enumerate}}
\newcommand{\bt}{\begin{thm}}
\newcommand{\et}{\end{thm}}
\newcommand{\bl}{\begin{lem}}
\newcommand{\el}{\end{lem}}
\newcommand{\bp}{\begin{prop}}
\newcommand{\ep}{\end{prop}}
\newcommand{\bc}{\begin{cor}}
\newcommand{\ec}{\end{cor}}
\newcommand{\bdefn}{\begin{defn}}
\newcommand{\edefn}{\end{defn}}
\newcommand{\brem}{\begin{rem}}
\newcommand{\erem}{\end{rem}}
\newcommand{\bproof}{\begin{proof}}
\newcommand{\eproof}{\end{proof}}
\newcommand{\bex}{\begin{ex}}
\newcommand{\eex}{\end{ex}}
\newcommand{\R}{\mathbb{R}}
\newcommand{\bcs}{\begin{cases}}
\newcommand{\ecs}{\end{cases}}
\newcommand{\spc}{\;\;\;}
\newcommand{\expp}{\text{e}}
\newcommand{\se}{\text{SE}(2)}
\newcommand{\SUM}{\sum\limits}
\newcommand{\eps}{\epsilon}
\newcommand{\om}{\omega}
\newcommand{\hxi}{\hat{\xi}}
\newcommand{\het}{\hat{\eta}}
\newcommand{\htt}{\hat{t}}
\newcommand{\abs}[1]{\lvert #1\rvert}
\newcommand{\abss}[1]{\big\lvert #1 \big\rvert}
\newcommand{\jetp}{\tilde{\mathcal{P}}^{2,+}}
\newcommand{\jetn}{\tilde{\mathcal{P}}^{2,-}}
\date{}
\author{E. Baspinar\thanks{University of Bologna, Department of Mathematics.} \and G. Citti\thanks{University of Bologna, Department of Mathematics.}\footnotemark[1]}
\title{
Uniqueness of viscosity mean curvature flow solution \\ in two
 sub-Riemannian structures}
\begin{document}
\maketitle
%

\section{Introduction}

Mean curvature flow describes the evolution of a surface whose points move in the normal 
direction, with speed equal to the curvature. The first results in the Euclidean setting have been provided by Gage \cite{gage1983isoperimetric, gage1984curve}, Huisken \cite{huisken1984flow}, Gage-Hamilton \cite{gage1986heat}, Grayson \cite{grayson1987heat} and Altschuler-Grayson \cite{altschuler1991shortening}, with the methods based on differential geometry. 
Since a mean curvature flow can develop singularities even for initially smooth surfaces, (see for example \cite{evans1991motion}), different notions of weak solution were proposed in order to study the flow after singularities: Brakke introduced in \cite{brakke1978motion} an approach based on the notion of varifold and geometric measure theory, Evans-Spruck \cite{evans1991motion}, \cite{evans1992motion2}, \cite{evans1992motion3}, \cite{evans1995motion} and Chen-Giga-Goto \cite{chen1991uniqueness} independently studied existence and uniqueness of viscosity solutions via level set methods. 

The level set method identifies the evolving surface at time $t$ as a level sext $M_t=\{x\in\R^n:\;u(x,t)=0  \}$ of a function $u$, which is a solution to a differential equation. In $n-$dimensional Euclidean setting the curvature can be expressed as 
 $\displaystyle K=\operatorname{div}\Big(\frac{\nabla u}{\abs{\nabla u}}\Big)$
and the mean curvature flow equation reads:
\begin{align}\label{eq:euclideanMCF}
\partial_t u(x,t)=\abs{\nabla u}K=\SUM_{i,j=1}^n\Big(\delta_{ij}-\frac{\partial_{x_i}u\,\partial_{x_j}u}{\abs{\nabla u}^2} \Big)\partial_{x_i x_j}u,
\end{align}
where $\delta_{ij}$ denotes the Kronecker delta operator, $\partial_t$ represents the derivative with respect to time variable and $\partial_{x_i}$ the derivative with respect to $i^{\text{th}}$ spatial variable. 
 
The present paper focuses on the sub-Riemannian 
analogue of the mean curvature flow in a group $G$, which can be either a Carnot group of step 2 or the group $\se$ of rigid motions. A sub-Riemannian structure on one of these groups is defined by a triple $(G,\mathcal{D},(g_{ij}))$, where $G$ is 
the group, $\mathcal{D}$ is a horizontal distribution and $(g_{ij})$ is a metric on $\mathcal{D}$. The space has the bracket generating properties at step $2$. That is, if we denote a basis of $\mathcal{D}$ by $\{X_1, X_2,\dots, X_m\}$ then $\{[X_i, X_j]\}_{i,j=1,2\dots,m}$ together with $\{X_1, X_2, \dots, X_m \}$ spans the tangent space $TG$ to $G$ at every point where $[.\,,\,.]$ is the Lie bracket. Eventually we will choose a metric $(g_{ij})$ on $\mathcal{D}$, which will make $X_1,X_2,\dots, X_m$ orthonormal. 
In the particular case where we consider $\se$, the underlying manifold will be 
$G = \R^2 \times S^1$, its elements will be expressed by
$\xi=(x, y, \theta)\in\se$ so that $x,\; y$ denote the spatial components and $\theta$ the angular (orientation) component. We will make the choice of the vector fields 
\begin{align}\label{eq:seHVfs}
X_1=\cos(\theta)\partial_x+\sin(\theta)\partial_y,\quad X_2=\partial_{\theta},
\end{align} 
at every $\xi\in G$, which satisfy the bracket generating condition, as it is easy to verify. We will denote the commutator by
\begin{equation}\label{eq:seVVfs}
X_{3}=-\sin(\theta)\partial_x+\cos(\theta)\partial_y.
\end{equation}
While studying a Carnot group $G\simeq \R^n$ of step 2 we will denote the corresponding elements by $\xi= (x,\theta)$, with $x=(x_1,x_2,\dots,x_m)$ representing the horizontal variables and $\theta=(\theta_1,\theta_2,\dots,\theta_{n-m})$ representing the variables of the second layer. It is known (see for example \cite{rothschild1976hypoelliptic}) that a basis of 
bracket generating vector fields can be represented by using $m\times m$ real matrices $W^{(i)}$ in this setting as:
\begin{align}\label{eq:VFs_representation_carnot}
\begin{split}
X_i=\partial_{x_i}+\langle(W x)_i, \nabla_\theta \rangle=\partial_{x_i}+\SUM_{k=1}^{n-m}\SUM_{l=1}^m w_{il}^{(k)} x_l\partial_{\theta_k},\quad i=1,\dots,m,
\end{split}
\end{align}
with $\nabla_\theta=(\partial_{\theta_1},\dots,\partial_{\theta_m})$, $(W x)_i=((W^{(1)}x)_i,\dots,(W^{(m)}x)_i)$ where $(W^{(k)}x)_i$ is the $i^{\text{th}}$ component of $W^{(k)}x$  (see also \cite{bonfiglioli2007stratified}, \cite{capogna2009generalized} and \cite{bonfiglioli2005note}). Note that up to a Lie group isomorphism the matrices $W^{(k)}$ can be assumed to be skew-symmetric (see for example \cite{bonfiglioli2005note} and \cite{arena2010taylor} for more details). Therefore we will consider that
\begin{equation}\label{eq:skewsymmetric_W}
w_{ij}^{(k)}=-w_{ji}^{(k)},\quad w_{ii}^{(k)}=0.
\end{equation}
Furthermore the vertical vector fields in the second layer can be expressed as
\begin{equation}\label{eq:vertical_vfs}
X_{i}=\partial_{\theta_{i-m}},\quad i=m+1,\dots,n.
\end{equation}

We explicitly note that that the Heisenberg group can be considered as the limit structure obtained 
from $\se$ via a blow up procedure (see \cite{chen1991uniqueness}, \cite{evans1993convergence} and \cite{franchi2003regular}), hence those two structures, 
which have completely different group laws, share the same local structure. 
This is why they can be studied together, and used as models of the same types of problems. The interest in studying motion by curvature in these two groups comes from applications of image inpainting through models of the visual cortex. Recall that the first layer (V1) of the mammal visual cortex 
was modeled as a smooth sub-Riemannian surface with the local structure of the Heisenberg group in
\cite{petitot1999vers}, and with the $\se$ geometry in \cite{citti2006cortical}. As a consequence, some models of image 
completion inspired by the functionality of the cortex were proposed in \cite{merriman1992diffusion} and \cite{citti2006cortical}. Convincing completion results have been presented using the mean curvature flow 
in these groups (see \cite{sanguinetti2008implementation} and \cite{citti2016sub}). 
We also remark that numerous image processing applications can be performed in similar
sub-Riemannian geometries (see for example the algorithms proposed in \cite{franken2009crossing}, \cite{duits2010left}, \cite{duits2010left2}, \cite{duits2011left}, \cite{duits2013evolution}, \cite{bekkers2014multi}). 

The main obstacle to the development of a strong numerical theory 
for sub-Riemannian mean curvature flow is the lack of uniqueness results in those settings. Indeed the existence of sub-Riemannian mean curvature flow solutions is known in Carnot groups (see \cite{capogna2009generalized}), and in general H\"ormander structures \cite{dirr2010evolution}, but the uniqueness problem is still largely open. Furthermore, the geometry in Carnot groups of step 2 is different from the geometry in Carnot groups of a higher order step, since it was shown in \cite{capogna2009generalized} that the planes are not minimal surfaces in Carnot groups of a step strictly bigger than two, resulting in the lack of a family of functions which can be used as barrier functions. Hence it is natural to focus only on step 2 groups. Up to now  Capogna and Citti \cite{capogna2009generalized} proved the uniqueness of evolving graphs in a Carnot group by using the fact that graphs do not suffer from singularities during the mean curvature flow. In the special case of the 3-dimensional Heisenberg group, Ferrari, Liu and Manfredi \cite{ferrari2014horizontal} provided uniqueness under the assumption of axisymmetricity of solutions to the sub-Riemannian mean curvature flow equation given by \eqref{eq:orgMCF} in the sequel. 

Here we will present a complete proof of the uniqueness of the sub-Riemannian mean curvature flow solution in the generic setting of Carnot group of step 2 and in the setting of $\se$ by discarding the previous restrictions on the solution.

Differential calculus in sub-Riemannian spaces is well-established 
(see for example \cite{montgomery2006tour}). The horizontal gradient of a function is defined as
\begin{align}
\nabla_0 =(X_1,X_2,\dots, X_m). 
\end{align}
The notion of regular surface in sub-Riemannian settings has been introduced by 
Franchi, Serapioni and Serra Cassano \cite{franchi2003regular} as the zero level set $M=\{\xi \in G: u(\xi)=0 \}$ of a smooth function $u$, whose horizontal gradient does not vanish. 
However even surfaces which are regular in the Euclidean sense
have points at which the 
horizontal gradient vanishes. We call such points \emph{characteristic points} and we denote the set of those points by $\Sigma(M)=\{\xi\in M:\; \abss{\nabla_0 u(\xi,t)}=0\}$. As a result of the presence of characteristic points we lack a definition of mean curvature in sub-Riemannian settings. 

Let us recall that at non-characteristic points the horizontal normal is defined as 
$$\nu_0=\frac{\nabla_0 u}{\abs{\nabla_0 u}},$$
and the horizontal mean curvature of the manifold $M$ is given by
\begin{align}
K_0=\SUM_{i=1}^m X_i\nu_{0,i}.
\end{align}
This notion has been introduced in a general setting in \cite{danielli2007sub}. Results on the mean curvature equation in the special setting of the Heisenberg group were provided in \cite{capogna2007introduction}, \cite{capogna2009generalized}, \cite{capogna2013sub}, \cite{ferrari2014horizontal}
and in $\se$ by \cite{citti2016sub}. We refer to \cite{capogna2009generalized} for more detailed references.

Analogously to the Euclidean setting, the horizontal mean curvature flow is the evolution of a surface $M_0\subset G$ with normal speed equal to the horizontal mean curvature. If $M_0$ is the level set of a function $u_0$, the flow at time $t$ will be identified as the level set $M_t=\{\xi\in G:\; u(\xi,t)=0\}$ of the solution of the following degenerate problem:
\begin{align}\label{eq:orgMCF}
\bcs
\partial_t u=\SUM_{i,j=1}^m\Big(\delta_{ij} - \frac{X_i u X_j u}{|\nabla_0 u|^2}\Big)(X_iX_j)u\quad\text{in $G\times (0,\infty)$}\\
u(.,0)=u_0(.)\quad \text{on $G\times \{ 0 \}$}.
\ecs
\end{align}

In order to establish the uniqueness we prove the following comparison 
principle for the viscosity solutions of problem \eqref{eq:orgMCF} (see Definition \ref{defn:viscosityDefn} below):
\begin{corollary}\label{thm:comparisonPrincipleViscosity}
Assume that $u$ and $v$ are continuous viscosity solutions of 
problem \eqref{eq:orgMCF} (as defined in Definition \ref{defn:viscosityDefn}) such that 
\begin{itemize}
\item{there exists $R>0$ with $u=0$, $v=0$ for $|\xi|>R$},
\item{$u\leq v$ at time $t=0$.}
\end{itemize}
Then $u\leq v$ for every $t>0$. 
\end{corollary}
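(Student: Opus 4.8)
The plan is to derive Corollary~\ref{thm:comparisonPrincipleViscosity} from a comparison theorem for the (singular, degenerate parabolic) level-set PDE~\eqref{eq:orgMCF}, following the by-now classical viscosity-solution scheme of Chen--Giga--Goto and Evans--Spruck, adapted to the sub-Riemannian vector fields $X_1,\dots,X_m$. The key difficulty is that the operator
\[
F(\xi,p,A)=-\operatorname{trace}\!\Big(\Big(I-\tfrac{p\otimes p}{|p|^2}\Big)A\Big)
\]
(written in the horizontal frame, with $p=\nabla_0 u$, $A=(X_iX_j u)$) is singular at $p=0$, so one must use the notion of viscosity solution for singular parabolic equations (test functions with vanishing horizontal gradient at the contact point are required to have vanishing horizontal Hessian there as well, as in Definition~\ref{defn:viscosityDefn}). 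First I would recall that $F$ is degenerate elliptic and satisfies the structure condition needed for doubling: because $I-\frac{p\otimes p}{|p|^2}$ is bounded and continuous away from $p=0$, and because the first-order horizontal terms coming from the non-commutativity of the $X_i$ are Lipschitz in $\xi$ with linear growth (this is where the step-$2$ structure and the explicit forms~\eqref{eq:VFs_representation_carnot}, \eqref{eq:seHVfs} enter), the comparison machinery applies on the relevant compact region.

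The core argument is the standard doubling of variables. Suppose for contradiction that $\sup_{G\times[0,T]}(u-v)=:2\delta>0$ for some $T>0$. Using the hypothesis that $u$ and $v$ vanish for $|\xi|>R$, the supremum is attained on a compact set, so I would introduce, for $\varepsilon,\alpha>0$,
\[
\Phi(\xi,\eta,t)=u(\xi,t)-v(\eta,t)-\frac{d(\xi,\eta)^4}{\varepsilon}-\alpha\!\left(|\xi|^2+|\eta|^2\right)-\frac{\sigma}{T-t},
\]
where $d$ is a suitable (smooth, away from the diagonal) gauge-type distance adapted to $G$ — a homogeneous norm difference raised to a high even power, so that the penalization is $C^2$ and its horizontal gradient/Hessian have the correct blow-up rates. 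Let $(\xi_\varepsilon,\eta_\varepsilon,t_\varepsilon)$ be a maximum point. One shows $t_\varepsilon>0$ for $\varepsilon,\sigma$ small (using $u\le v$ at $t=0$), that $d(\xi_\varepsilon,\eta_\varepsilon)^4/\varepsilon\to0$ and $\xi_\varepsilon,\eta_\varepsilon$ converge to a common maximum point of $u-v$. Then the parabolic theorem on sums (Crandall--Ishii lemma, in the form valid for the $X_i$-frame after a local change of coordinates, or applied in Euclidean coordinates and then re-expressed via the $X_i$) produces matrices $\mathcal{X}\ge\mathcal{Y}$ and numbers $a-b=\sigma/(T-t_\varepsilon)^2>0$ such that $(a,p_\varepsilon,\mathcal{X})$ is in the closure of the parabolic superjet of $u$ and $(b,p_\varepsilon,\mathcal{Y})$ in that of $v$, with $p_\varepsilon$ the common horizontal gradient of the penalty term. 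Feeding these into the viscosity sub/supersolution inequalities and subtracting gives
\[
\frac{\sigma}{(T-t_\varepsilon)^2}\le F(\eta_\varepsilon,p_\varepsilon,\mathcal{Y})-F(\xi_\varepsilon,p_\varepsilon,\mathcal{X})+o(1),
\]
and the right-hand side is shown to be $\le o(1)$ as $\varepsilon\to0$ — this is the contradiction. The case $p_\varepsilon=0$ is handled separately using the singular-PDE definition: there both $\mathcal{X}$ and $\mathcal{Y}$ can be taken with vanishing relevant part, and the inequality $a\le 0\le b$ contradicts $a-b>0$.

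The step I expect to be the main obstacle is controlling $F(\eta_\varepsilon,p_\varepsilon,\mathcal{Y})-F(\xi_\varepsilon,p_\varepsilon,\mathcal{X})$ uniformly as $\varepsilon\to0$, for two reasons. First, the nonlinearity is singular at $p=0$, so one needs to know that either $p_\varepsilon$ stays bounded away from $0$ along a subsequence (then continuity of $F$ off the singularity plus $\mathcal{X}\le\mathcal{Y}$ does the job, since the projection matrix $I-\frac{p\otimes p}{|p|^2}$ is positive semidefinite) or $p_\varepsilon\to0$, in which case one must invoke the definition of viscosity solution for the singular equation and argue more carefully that the test-function penalty still has vanishing horizontal Hessian contribution in the limit. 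Second — and this is the genuinely sub-Riemannian point — the first-order terms generated when commuting $X_iX_j$ past the level-set operator, together with the $\xi$-dependence of the frame, must be estimated using $d(\xi_\varepsilon,\eta_\varepsilon)^4/\varepsilon\to0$ and the linear growth in~\eqref{eq:VFs_representation_carnot}; choosing the fourth power (rather than the square) of the gauge distance in the doubling is precisely what makes these error terms vanish. I would treat the Carnot step-$2$ case and the $\se$ case in parallel, since, as noted in the introduction, they share the same local structure; the only difference is the explicit form of the frame and of the homogeneous gauge, and both satisfy the required Lipschitz-with-linear-growth bounds. Finally, letting $\alpha\to0$ and then $\sigma\to0$ removes the auxiliary penalizations and yields $u\le v$ on $G\times(0,T]$ for every $T$, which is the claim.
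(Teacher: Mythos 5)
Your proposal is the classical Chen--Giga--Goto/Evans--Spruck doubling of variables applied \emph{directly} to the two viscosity solutions $u$ and $v$ of the singular equation \eqref{eq:orgMCF}. This is precisely the route that the paper identifies (following Ferrari, Liu and Manfredi) as the obstruction in the sub-Riemannian setting, and it is not the paper's proof. The concrete gap is in your treatment of $F(\eta_\varepsilon,p_\varepsilon,\mathcal{Y})-F(\xi_\varepsilon,p_\varepsilon,\mathcal{X})$: you assume the penalty term produces a \emph{common} horizontal gradient $p_\varepsilon$ for the two jets. For non-commuting frames this fails: for any reasonable gauge $\phi(\xi,\eta)$ one has $X_i^\xi\phi\neq -X_i^\eta\phi$ (see \eqref{sommaderivaphi}, where the mismatch is computed explicitly and is only \emph{small}, of order $\beta$, not zero). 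Hence the sub- and supersolution inequalities are tested at two different horizontal gradients $p_\xi$ and $p_\eta$, and since the nonlinearity $p\mapsto I-p\otimes p/\abs{p}^2$ is $0$-homogeneous and discontinuous at $p=0$, the difference of the two projection matrices is $O(1)$ whenever the gradients are small -- which is exactly the regime of interest near degenerate points. Choosing the fourth power of the gauge cures the $p=0$ singularity of the test function (the standard Euclidean trick) but does nothing for this frame mismatch, so the asserted bound ``$\leq o(1)$'' on the right-hand side is unsubstantiated at the one point where the argument is hard. Likewise, invoking the Crandall--Ishii theorem on sums ``in the $X_i$-frame after a local change of coordinates'' hides real work: the paper has to build the matrix $A$ from the exponential increments and prove Lemmas \ref{rem:fistOrderEst_both}--\ref{rem:SecondOrderEst_Lemma_both} to control it.

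The paper's actual proof of the corollary is essentially one line given Theorem \ref{thm:univiscosityTheorem}: by that theorem, $u$ and $v$ are each within $M\eps^{\alpha}$ of solutions $u^{\eps}_{\delta}$, $v^{\eps}_{\delta}$ of the regularized problem \eqref{eq:regMCF} with the respective initial data $u_0\leq v_0$; the regularized equation is non-singular, so the comparison principle of Theorem \ref{thm:comparisonForVanishingViscosity} gives $u^{\eps}_{\delta}\leq v^{\eps}_{\delta}$, whence $u\leq v+2M\eps^{\alpha}$, and letting $\eps\to 0$ concludes. The doubling of variables is performed only in the proof of Theorem \ref{thm:univiscosityTheorem}, and there it compares a viscosity solution of \eqref{eq:orgMCF} with a \emph{smooth} solution of the \emph{regularized} equation: the $\eps^2$ in the denominator tames the $0$-homogeneity, so the weaker estimate $\abs{X_i^\xi\phi+X_i^\eta\phi}\leq \tilde K\mu\eps^{1-\gamma/2}d_\beta^{\gamma-1}d_0\beta$ of \eqref{firstderivphi} suffices. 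If you want to salvage your write-up, you should prove the approximation theorem first and then deduce the comparison principle from it, rather than attempting the direct two-solution doubling.
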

Note that the role of the boundary conditions on the solutions is to restrict the solution level set to stay in a bounded set. In fact any constant $C$ could be chosen as the boundary condition without losing the generality. We choose zero as boundary condition for the sake of simplicity when we study the asymptotic behavior of the solutions.

The proof relies on a regularization procedure together with a Riemannian approximation. We will use also the general notation which includes the approximating vertical vector fields given by
\begin{equation}\label{eq:generalVFs}
X_{i\delta}=\delta^{\operatorname{deg}i-1}X_i,\quad i=1,\dots,n,
\end{equation} 
where  $\delta>0$ and $\operatorname{deg(\cdot)}$ gives the degree of its argument, i.e.,
\begin{equation}
\operatorname{deg}i=\begin{cases}
1\quad \text{if}\quad i\leq m,\\
2\quad \text{if}\quad m+1\leq i\leq n.
\end{cases}
\end{equation}
We define the gradient
\begin{equation}\label{eq:defin_approximating_gradient}
\nabla_{\delta}=(X_1, X_2,\dots,X_m, X_{(m+1)\delta},\dots,  X_{n\delta}),
\end{equation}
with $0<\delta,\eps<1$, then for an open ball $B(0, R)$ (with respect to the left-invariant metric generated by $X_1,\dots,X_m$) with radius $R$ centered at the origin we introduce the regularized problem with the Riemannian approximation 
\begin{align}\label{eq:regMCFbounded}
\bcs\partial_t u^{\eps}_{\delta}=\SUM_{i,j=1}^n\Big(\delta_{ij} - \displaystyle\frac{X_{i\delta} u^{\eps}_{\delta} X_{j\delta}u^{\eps}_{\delta}}{|\nabla_\delta u^{\eps}_{\delta}|^2 + \epsilon^2}\Big)X_{i\delta}X_{j\delta}u^{\eps}_{\delta}\quad\text{in $B(0, R)\times (0,\infty)$}\\
u^{\eps}_{\delta}(.,0)=u_0(.)\quad \text{on $B(0, R)\times \{ 0 \}$}\\
u^{\eps}_{\delta}(.,t)=0\quad \text{on $\partial B(0, R)\times  [0, T] $},
\ecs
\end{align}
and the same problem on the whole space
\begin{align}\label{eq:regMCF}
\bcs
\partial_t u^{\eps}_{\delta}=
\SUM_{i,j=1}^n\Big(\delta_{ij} - \displaystyle\frac{X_{i\delta} u^{\eps}_{\delta} X_{j\delta}u^{\eps}_{\delta}}{|\nabla_\delta u^{\eps}_{\delta}|^2 + \epsilon^2}\Big)X_{i\delta}X_{j\delta}u^{\eps}_{\delta}\quad\text{in $G\times (0,\infty)$}\\
u^{\eps}_{\delta}(.,\,0)=u_0(.)\quad \text{on $G\times \{ 0 \}$}.
\ecs
\end{align}

\begin{remark}
Here we explicitly remark that the solutions $u^{\eps}_{\delta}$ to the equations given by \eqref{eq:regMCFbounded} and \eqref{eq:regMCF} are of $C^{\infty}$ type and they are considered as the solutions in the classical sense.
\end{remark}

Then we use the following \emph{vanishing viscosity} solution definition: 
\begin{definition}\label{vvisc}
A function $u$ is called a bounded, continuous vanishing viscosity solution to \eqref{eq:orgMCF} if there exist a constant $\mathcal{C}$, for any compact set $Z$ a constant $C(Z)$, and for any $\delta,\,\eps,\,R>0$ a solution $u^{\eps, R}_{\delta}$ to the problem \eqref{eq:regMCFbounded} such that 
\begin{itemize}
\item{
$||u^{\eps, R}_{\delta}||_{\infty}  \leq \mathcal{C}, \quad |\nabla_E u^{\eps, R}_{\delta}(\xi, t)|\leq C(Z)$ where $\nabla_E$ denotes the Euclidean gradient,}
\item{there exists a function  $u^{\eps}_{\delta}$, solution to \eqref{eq:regMCF}, such that 
$u^\eps_\delta =\displaystyle\lim_{R\rightarrow +\infty}u^{\eps, R}_\delta
$ uniformly,}
\item{
$u=\displaystyle \lim_{\eps,\delta\rightarrow 0}u^{\eps}_{\delta}$ uniformly.}
\end{itemize} 
\end{definition}

This definition immediately implies that comparison principle and uniqueness are 
valid for vanishing viscosity solutions (see also Theorem \ref{thm:comparisonForVanishingViscosity} below and \cite{citti2016sub} for more details). The same result will 
be extended to viscosity solutions, proving that 
those two notions, viscosity and vanishing viscosity solutions, 
coincide. This assertion has been already known in the Euclidean setting but not in a sub-Riemannian setting. 
Indeed the crucial idea of the article is to establish the following 
approximation result:
\begin{theorem}\label{thm:univiscosityTheorem}
Let $v$ be a bounded, continuous viscosity solution to the problem \eqref{eq:orgMCF} in the sense of Definition \ref{defn:viscosityDefn} below, constantly equal to $0$ outside of a compact set, and let $u^\epsilon_{\delta}$ be a solution to the problem \eqref{eq:regMCF}, limit of problem 
\eqref{eq:regMCFbounded}. Then for every $0<T<\infty$ and $\alpha>0$ there exists a constant $M=M(u_0,T,\alpha)$ such that
\begin{align}
\sup_{\xi\in G, 0\leq t\leq T}\lvert (v-u^{\eps}_{\delta})(\xi, t)\rvert\leq M\eps^{\alpha},
\end{align}
for all $0<\eps<1$ and $\delta=\delta(\eps)$. 
\end{theorem}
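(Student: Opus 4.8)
The plan is to establish the one-sided bound $\sup_{G\times[0,T]}(v-u^\eps_\delta)\le M\eps^\alpha$ by a doubling-of-variables argument; the estimate for $\sup(u^\eps_\delta-v)$ is obtained by interchanging the two functions, and together they give the claim. Throughout, $v$ is used only through its viscosity subsolution property for \eqref{eq:orgMCF} (Definition \ref{defn:viscosityDefn}), while $u^\eps_\delta$ is a classical, hence viscosity, solution of the uniformly parabolic regularized problem \eqref{eq:regMCF}. Two preliminary reductions are made. First, since planes are minimal surfaces in Carnot groups of step $2$ and in $\se$, the half-space barriers yield finite speed of propagation, so there is an $R_0$ independent of $\eps,\delta$ with $v\equiv 0\equiv u^\eps_\delta$ for $\abs{\xi}>R_0$ on $[0,T]$; thus all suprema below are attained on a fixed compact set and the noncompactness of $G$ is irrelevant. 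Second, the comparison principle for \eqref{eq:regMCFbounded}--\eqref{eq:regMCF} gives $\Vert u^\eps_\delta\Vert_\infty\le\Vert u_0\Vert_\infty$ and, by comparing $u^\eps_\delta(\cdot,t+h)$ with $u^\eps_\delta(\cdot,t)$ and a short-time barrier estimate, a uniform bound $\Vert\partial_t u^\eps_\delta\Vert_\infty\le C_1(u_0)$; together with the Euclidean gradient bound from Definition \ref{vvisc}, these are the only estimates on $u^\eps_\delta$ uniform in $\eps,\delta$ that the argument may invoke.

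For $\beta,\gamma>0$ set
\[
\Phi(\xi,\eta,t)=v(\xi,t)-u^\eps_\delta(\eta,t)-\frac1\beta\,d_\beta(\xi,\eta)^4-\frac{\gamma}{T-t},\qquad (\xi,\eta,t)\in G\times G\times[0,T),
\]
where $d_\beta$ is a smooth, left-invariant gauge-type distance adapted to $X_1,\dots,X_m$ and regularized at scale $\beta$. The fourth power is essential: at a maximum point the horizontal part of $\nabla_\xi(\tfrac1\beta d_\beta^4)$, and on the diagonal its Hessian, vanish quadratically in $d_\beta$, providing exactly the flatness required to enter the degenerate branch of the viscosity-solution definition when $\nabla_0$ of the test function degenerates; and the left-invariance of $d_\beta$ is what makes the Hessian estimates compatible with the non-commuting vector fields. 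Routine penalization bookkeeping forces $\bar t<T$, then $\tfrac1\beta d_\beta(\bar\xi,\bar\eta)^4\le u^\eps_\delta(\bar\xi,\bar t)-u^\eps_\delta(\bar\eta,\bar t)=O(d_\beta(\bar\xi,\bar\eta))$, hence $d_\beta(\bar\xi,\bar\eta)\to0$ as $\beta\to0$, while the initial condition $v(\cdot,0)=u_0=u^\eps_\delta(\cdot,0)$ gives $\Phi(\cdot,\cdot,0)\le\omega_{u_0}(d_\beta)\to0$; so if $\sup\Phi>\omega_{u_0}(d_\beta)$ the maximum $(\bar\xi,\bar\eta,\bar t)$ has $0<\bar t<T$.

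At such an interior maximum one applies the parabolic theorem on sums (or, since $u^\eps_\delta$ is smooth, reads the relations off directly): there are symmetric matrices $X,Y$ with $X$ in the closure of the parabolic superjet of $v$ at $(\bar\xi,\bar t)$, with time-slope $\partial_t u^\eps_\delta(\bar\eta,\bar t)+\tfrac{\gamma}{(T-\bar t)^2}$ and horizontal gradient $p_\xi=\nabla_\xi(\tfrac1\beta d_\beta^4)$, and $Y=D^2u^\eps_\delta(\bar\eta,\bar t)$, $p_\eta=\nabla u^\eps_\delta(\bar\eta,\bar t)=-\nabla_\eta(\tfrac1\beta d_\beta^4)$, with $p_\xi,p_\eta$ comparable, tending to $0$ with $\beta$, and $X\le Y+E_\beta$ where the error $E_\beta=o_\beta(1)$ comes only from the distance-versus-coordinate mismatch and the variable coefficients of the $X_i$. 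Since $u^\eps_\delta$ solves \eqref{eq:regMCF} classically, its operator is monotone in the Hessian, and $v$ is a viscosity subsolution of \eqref{eq:orgMCF}, subtracting the two relations yields
\[
\frac{\gamma}{(T-\bar t)^2}\le\operatorname{tr}\!\big(B_0(\bar\xi,p_\xi)\,X\big)-\operatorname{tr}\!\big(B_{\eps,\delta}(\bar\eta,p_\eta)\,Y\big)+\mathcal R_1(\beta,\delta),
\]
where $B_0$, $B_{\eps,\delta}$ are the matrix symbols built from $\big(\delta_{ij}-p_ip_j/\abs{p}^2\big)$, $\big(\delta_{ij}-p_ip_j/(\abs{p}^2+\eps^2)\big)$ and the vector-field coefficients, and $\mathcal R_1$ gathers the first-order commutator terms, which are $o_\beta(1)+O(\delta)$ since each carries a factor $p_\xi$ or $p_\eta$. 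Writing the right-hand side as $\operatorname{tr}\big(B_0(X-Y)\big)+\operatorname{tr}\big((B_0(\bar\xi,p_\xi)-B_{\eps,\delta}(\bar\eta,p_\eta))Y\big)$, the first term is $o_\beta(1)$ by $0\le B_0$ and $X-Y\le E_\beta$, and in the second the base-point change $\bar\xi\to\bar\eta$ and gradient change $p_\xi\to p_\eta$ are $o_\beta(1)$ for $\eps$ fixed, so one is reduced to the genuine discrepancy $\operatorname{tr}\big((B_0(\bar\eta,p_\eta)-B_{\eps,\delta}(\bar\eta,p_\eta))Y\big)$. This is estimated by separating the regimes where the horizontal component of $p_\eta$ has size $\ge\eps^\theta$ and $<\eps^\theta$, for $\theta=\theta(\alpha)$: in the first regime the symbols differ in operator norm by $O(\eps^{2-2\theta})+O(\delta)$, and using the rank-one structure $A_0(p)-A_\eps(p)=-\tfrac{\eps^2}{\abs{p}^2(\abs{p}^2+\eps^2)}p\otimes p$ together with the one-sided bound $Y\ge-C\beta^{-1}d_\beta^2I$ one controls the contribution; in the near-characteristic regime the regularized operator is (for $\theta\ge1$) uniformly elliptic, and one instead uses the flatness of the quartic penalization together with the uniform bounds $\Vert u^\eps_\delta\Vert_\infty$, $\Vert\partial_t u^\eps_\delta\Vert_\infty$, $\Vert\nabla_E u^\eps_\delta\Vert_\infty$. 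Collecting, $\tfrac{\gamma}{(T-\bar t)^2}\le C(u_0,T,\alpha)\eps^\alpha+\omega(\beta)+O(\delta)$; choosing $\delta=\delta(\eps)$ polynomially small in $\eps$, then $\beta=\beta(\eps)$ small enough, contradicts $\Phi(\bar\xi,\bar\eta,\bar t)\ge\sup(v-u^\eps_\delta)-\gamma/T-\omega_{u_0}(d_\beta)>M\eps^\alpha$ as soon as $M>M(u_0,T,\alpha)$, and $\gamma\to0$ concludes.

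The main obstacle is precisely the estimate of $\operatorname{tr}\big((B_0(\bar\eta,p_\eta)-B_{\eps,\delta}(\bar\eta,p_\eta))Y\big)$: the regularization makes $u^\eps_\delta$ smooth but its second derivatives are not bounded uniformly in $\eps$, while the operator of \eqref{eq:orgMCF} is singular exactly where $\nabla_0u^\eps_\delta$ degenerates, so this term cannot be bounded by $\Vert D^2u^\eps_\delta\Vert_\infty$. Making it $O(\eps^\alpha)$ forces one to (i) choose $d_\beta$ left-invariant and adapted to $X_1,\dots,X_m$ so that the theorem-on-sums error and the base-point mismatch vanish with $\beta$ despite the non-commutativity, (ii) exploit the quartic power so the relevant jets vanish quadratically with the gradient, (iii) carry out the two-regime analysis above, tying the cut-off exponent $\theta$ to the target exponent $\alpha$, and (iv) tune $\delta=\delta(\eps)$ so that the first-order terms produced by $X_{i\delta}X_{j\delta}$ and the vertical $\delta$-rescaling are themselves $O(\eps^\alpha)$; balancing these four sources of error against $\eps^\alpha$ is the whole technical content of the proof.
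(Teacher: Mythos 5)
Your high-level strategy --- doubling of variables, theorem on sums, playing the viscosity subsolution property of $v$ against the classical regularized solution $u^\eps_\delta$, and isolating the discrepancy between the degenerate symbol and the $\eps$-regularized one --- is the same as the paper's (which follows Deckelnick). But two of your steps have genuine gaps.

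First, your opening reduction is false as stated: problem \eqref{eq:regMCF} is a uniformly parabolic (Riemannian-approximated, $\eps$-regularized) equation, so $u^\eps_\delta$ has infinite speed of propagation and is \emph{not} identically constant outside any compact set for $t>0$, no matter how $u_0$ is supported. The paper replaces this with Theorem \ref{thm:exponentialEstimateThmHeisenberg}, an exponential decay estimate $\abs{1-u^\eps_\delta(\xi,t)}\leq B\expp^{-b\abs{\xi}}$, and Lemma \ref{lemma:lem41} then shows the maximum of the doubled function is attained in a ball of radius $\tilde R_\eps$ (which grows as $\eps\to0$) rather than in a fixed compact set. You need some version of this decay; half-space barriers do not give exact constancy here.

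Second, and more seriously, your penalization $\tfrac1\beta d_\beta^4+\tfrac{\gamma}{T-t}$ is decoupled from $\eps$, and the claimed output $\tfrac{\gamma}{(T-\bar t)^2}\le C\eps^\alpha+\omega(\beta)+O(\delta)$ is asserted rather than derived at exactly the point where the proof lives or dies. In the theorem on sums the matrix $Y$ is controlled only by the Hessian of the penalization, which blows up like $\beta^{-1}$, while the symbol difference contributes $\eps^2/(\abs{p_\eta}^2+\eps^2)$ with $\abs{p_\eta}\sim\beta^{-1}d_\beta^3\to0$; the product of these two competing quantities is not $o_\beta(1)+O(\eps^\alpha)$ for free, and your two-regime split in $\theta$ does not resolve it because in the near-characteristic regime you invoke bounds ($\Vert\partial_t u^\eps_\delta\Vert_\infty$, $\Vert\nabla_E u^\eps_\delta\Vert_\infty$) that do not control $Y$. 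The paper's resolution is structural: the penalization is $\tfrac{\mu}{\gamma}\eps^{1-\gamma/2}d_\beta^{\gamma}$ with $\mu\sim M^{1-\gamma}$ and $\gamma>2$, so the maximum point automatically satisfies $d_\beta^{\gamma}\le \tfrac{2\tilde C\gamma}{\mu}\eps^{\gamma/2-1}$, every derivative of $\phi$ carries explicit powers of $\eps$, $\mu$ and $d_\beta$, and the critical term (the paper's $I_3$) is closed by a Young inequality with exponents $\tfrac{2(\gamma-1)}{\gamma}$ and $\tfrac{2(\gamma-1)}{\gamma-2}$, which \emph{determines} $\alpha=\tfrac{(\gamma-2)(1-4\sigma)}{2(\gamma-1)}$ rather than leaving it free. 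Relatedly, you pass over the failure of the identity $X_i^\xi\phi=-X_i^\eta\phi$ in these groups --- the point the paper singles out as the main sub-Riemannian obstruction --- with an unquantified ``$o_\beta(1)$''; the paper needs the precise gain $\abs{X_i^\xi\phi+X_i^\eta\phi}\le\tilde K\mu\eps^{1-\gamma/2}d_\beta^{\gamma-1}d_0\beta$ from Lemma \ref{rem:fistOrderEst_both} to absorb the resulting terms $I_4$, $I_5$. Without coupling the penalization scale to $\eps$ and $M$ and without these explicit first-order estimates, the contradiction with $M\eps^\alpha$ cannot be extracted.
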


An immediate consequence of Theorem \ref{thm:univiscosityTheorem} is the following:
\begin{corollary}Any continuous 
viscosity solution $v$ is a vanishing viscosity solution. 
\end{corollary}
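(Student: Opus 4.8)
The corollary is an immediate reformulation of Theorem \ref{thm:univiscosityTheorem} in the language of Definition \ref{vvisc}. Let $v$ be a continuous viscosity solution of \eqref{eq:orgMCF} which, consistently with the setting above, vanishes outside a compact set, and put $u_0 := v(\cdot,0)$. For every $\delta,\eps,R>0$ let $u^{\eps,R}_\delta$ denote the solution of the regularized, Riemannian-approximated problem \eqref{eq:regMCFbounded} on $B(0,R)$ with initial datum $u_0$; these solutions exist and are of class $C^\infty$ (classical solutions), as recalled in the remark following \eqref{eq:regMCF}. It then suffices to verify the three items of Definition \ref{vvisc} for this family.

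For the first item, the $L^\infty$ bound $\|u^{\eps,R}_\delta\|_\infty\le\mathcal{C}$ with $\mathcal{C}=\|u_0\|_\infty$ follows from the comparison principle for the (uniformly parabolic) problem \eqref{eq:regMCFbounded}, since the constants $\pm\|u_0\|_\infty$ are a super- and a subsolution; in particular $\mathcal{C}$ is independent of $\eps,\delta,R$. The locally uniform Euclidean gradient bound $|\nabla_E u^{\eps,R}_\delta(\xi,t)|\le C(Z)$ on a compact set $Z$ is the a priori gradient estimate for the regularized equation already used in the construction of vanishing viscosity solutions (see \cite{citti2016sub}). The second item — the existence of a solution $u^\eps_\delta$ of \eqref{eq:regMCF} that is the uniform limit of $u^{\eps,R}_\delta$ as $R\to+\infty$ — is precisely the standing hypothesis under which Theorem \ref{thm:univiscosityTheorem} is stated, so nothing new is needed.

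For the third item I invoke Theorem \ref{thm:univiscosityTheorem} with, say, $\alpha=1$: for each $0<T<\infty$ there is a constant $M=M(u_0,T)$ such that $\sup_{\xi\in G,\,0\le t\le T}|(v-u^\eps_\delta)(\xi,t)|\le M\eps$ for all $0<\eps<1$ and $\delta=\delta(\eps)$. Letting $\eps\to0$ along this coupling gives $u^\eps_\delta\to v$ uniformly on $G\times[0,T]$, for every $T>0$, which is exactly the uniform convergence $v=\lim_{\eps,\delta\to0}u^\eps_\delta$ required by Definition \ref{vvisc}. Combining the three items shows that $v$ is a bounded, continuous vanishing viscosity solution of \eqref{eq:orgMCF}.

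There is essentially no obstacle in this argument: the entire analytic content sits in Theorem \ref{thm:univiscosityTheorem} (and in the a priori estimates for \eqref{eq:regMCFbounded} quoted for the first item). The only point deserving a moment's attention is that the estimate in Theorem \ref{thm:univiscosityTheorem} is quantitative and uniform in $\xi\in G$ and $t\in[0,T]$, so it truly delivers the uniform convergence demanded by the definition rather than merely pointwise or locally uniform convergence.
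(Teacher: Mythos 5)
Your argument is correct and matches the paper's intent exactly: the paper presents this corollary as an immediate consequence of Theorem \ref{thm:univiscosityTheorem}, and your write-up simply makes explicit the verification of the three items of Definition \ref{vvisc}, the first two of which are supplied by Theorem \ref{existence} and the construction of $u^{\eps}_{\delta}$ as the limit of $u^{\eps,R}_{\delta}$. One small caution: the proof of Theorem \ref{thm:univiscosityTheorem} actually produces the rate only for $\alpha=\frac{(\gamma-2)(1-4\sigma)}{2(\gamma-1)}<\frac12$, so it is safer to invoke the theorem with some small admissible $\alpha>0$ rather than $\alpha=1$; any positive $\alpha$ still yields the required uniform convergence as $\eps\to 0$.
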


The proof of Theorem \ref{thm:univiscosityTheorem} generalizes the proof of Deckelnick \cite{deckelnick2000error} to the sub-Riemannian geometries of step 2 Carnot groups and $\se$. The comparison principle of viscosity
solutions is based on the maximization of 
\begin{equation}\label{wvmenophi}
w(\xi, \eta, t) = v(\xi,t) - u(\eta, t) - \phi(\xi,\eta, t),
\end{equation}

for a suitable 
function $\phi$ where $\xi,\eta\in G$. In Euclidean setting (see for example \cite{deckelnick2000error}) the function $\phi$ satisfies the following conditions:

i) the sum of the second order derivatives vanish 
when the first order ones do, 

ii) $X_i^\xi\phi=-X_i^\eta\phi$, where $X_i^{\xi}$ and $X_i^{\eta}$ denote the $i^\text{th}$ horizontal vector fields at $\xi,\eta\in G$.

As noted by Ferrari, Liu and Manfredi \cite{ferrari2014horizontal}, the main difficulty in extending the classical approach to the sub-Riemannian setting is to find
a function $\phi$ satisfying the above mentioned conditions.
We handle this difficulty and we consider a weaker condition required on  $\lvert X_i^\xi\phi+X_i^\eta\phi\rvert$ (see \eqref{firstderivphi}). Then instead of following the classical approach for proving uniqueness, which is based on comparing two different solutions of the same equation, we compare two different solutions of two different equations, \eqref{eq:orgMCF} and \eqref{eq:regMCF}, where the latter converges to the first one at the limit as we send the parameters $\eps$ and $\delta$ to zero.    

Note that with the approximated equation \eqref{eq:regMCF} we give a formal meaning 
to the regularized operator $|\nabla_\delta u^\eps_\delta| K_0$ at the characteristic points. Indeed, formally if $\xi$ is characteristic 
for every $\epsilon >0$, then
\begin{equation}\label{eq:characteristic_gives_Laplace}
\SUM_{i,j=1}^n\Big(\delta_{ij} - \frac{X_{i\delta} u^\eps_\delta(\xi) X_{j\delta} u^\eps_\delta(\xi)}{|\nabla_\delta u^\eps_\delta(\xi)|^2 + \epsilon^2}\Big)X_{i\delta}X_{j\delta}(u^\eps_\delta(\xi)) =\SUM_{i=1}^m X_iX_iu(\xi),
\end{equation}
which is the Laplace operator of the function $u$ in the sub-Riemannian setting of $G$ as $\delta,\eps\rightarrow 0$. 

The article is organized as follows. In Section \ref{sec:DefnsPreliminaries}, we provide some basics of $\se$ and Carnot group sub-Riemannian geometries. In particular, we provide the notions of vanishing viscosity solution and viscosity solution, and describe the generalized flow. In Section \ref{sec:asymptoticProp} we give some geometric properties of vanishing viscosity solutions. Finally in Section \ref{sec:viscosityUniqueness} we prove Theorem \ref{thm:univiscosityTheorem} in both settings of Carnot groups and $\se$. From this theorem we deduce the comparison principle and the uniqueness result for viscosity solutions in those settings.

\section{Definitions and preliminary results}\label{sec:DefnsPreliminaries}

\subsection{Definition of viscosity and vanishing viscosity solutions}\label{sec:superjetDefns}

Let us consider a vector field $X$, a point $\xi$ in $G$. 
If $\gamma_{\xi, X}$ is a solution to the following Cauchy problem:
\begin{equation}
\begin{cases}
\dot{\gamma}_{\xi, X}(t)= X \gamma_{\xi, X}(t)\\
\gamma_{\xi, X}(0)=\xi,
\end{cases}
\end{equation}
then we will define  $\exp(X)(\xi):=\gamma_{\xi, X}(1)$. 
For every fixed $\xi$ and $e=(e_1, e_2,\dots, e_n)\in \R^n$ the exponential map 
\begin{equation}\label{eq:exponential_map_displacements}
e \mapsto  \exp(\sum_{i=1}^n e_i X_i)(\xi),
\end{equation}
is a local diffeomorphism from a neighborhood of $0$ in $\R^n$ to a neighborhood of 
$\xi$. 

In Carnot groups, using the representation of the vector fields provided in  \eqref{eq:generalVFs},
a direct computation shows that the increment defined in 
\eqref{eq:exponential_map_displacements} can be expressed as
\begin{align}\label{eq:carnot_increment_exp}
\begin{split}
e_i= & x_{\xi,i}-x_{\eta,i}\quad \text{if}\quad \operatorname{deg}i=1,\\
e_i= & \theta_{\xi,i-m}-\theta_{\eta,i-m}+\frac{1}{2}\SUM_{l,j=1}^m w_{lj}^{(i-m)}(x_{\xi,j}x_{\eta,l}-x_{\xi,l}x_{\eta,j})\quad \text{if}\quad \operatorname{deg}i=2.
\end{split}
\end{align}

On the other side, in the setting of $\se$, in order to simplify the computations, 
we will consider the increments associated to the vector fields with constant coefficients.  
Starting from \cite{ambrosio2006intrinsic}, it is indeed clear how the exponential distance \eqref{eq:exponential_map_displacements} induced by a family of vector fields is  
approximated with the distance induced by vector fields with constant coefficients. 
We consider the distance induced by the vector fields with coefficients evaluated 
at the point $\xi$, then we symmetrize the distance by following a similar idea as the one proposed in \cite{citti2006implicit}, \cite{citti2013poincare}. Here we choose $\theta_0=\theta_\xi$. Then the exponential increments becomes: 
\begin{align}\label{eq:se_increment_exp}
\begin{split}
e_1=\cos(\theta_0)&(x_{\xi}-x_{\eta})+ \sin(\theta_0)(y_{\xi}-y_{\eta}), \quad e_2=\sin(\theta_{\xi}-\theta_{\eta}),\\
 & e_3=-\sin(\theta_0)(x_{\xi}-x_{\eta})+\cos(\theta_0)(y_{\xi}-y_{\eta}).
\end{split}
\end{align}

We define the superjets $\mathcal{P}^{2,+}u(\xi,t)$ and $\mathcal{P}^{2,-}u(\xi,t)$, 
as follows:
\begin{align}\label{eq:defnViscositySub}
\begin{split}
\mathcal{P}^{2,+}u(\xi,t):=\{(a,p,H)\in \R\times \R^n \times \mathcal{S}(m)\; \vert \; u\Big(\exp\big(\SUM_{i}^n e_i X_i \big),s\Big)(\xi)\leq u(\xi,t)\\
+a(s-t)+ \SUM_{i}^n  p_i e_i + \frac{1}{2}\SUM_{i}^m H_{ij}e_ie_j  + o(\abs{s-t}+\abs{e}^2)\quad \text{as}\quad(e ,s-t)\rightarrow 0   \},
\end{split}
\end{align}
and $\mathcal{P}^{2,-}u(\xi,t)=-\mathcal{P}^{2,+}(-u)(\xi,t)$ with $\mathcal{S}(n)$ representing the group of $n\times n$ symmetric matrices. 
Note that 
$(p_1,p_2,\dots,p_m)$ plays the role analogous to a horizontal gradient in this formula. However in a Carnot group of step 2, the right hand side contains the analogue of the complete gradient 
$(p_1,p_2,\dots,p_n)$. Furthermore $(H_{ij})_{i,j=1,2,\dots, m}$ plays the role of a horizontal Hessian. Finally we denote the closure of the superjets by $\tilde{\mathcal{P}}^{2,+}$ and $\tilde{\mathcal{P}}^{2,-}$. 

Then the viscosity solution in this case is defined as follows:
\begin{definition}\label{defn:viscosityDefn}
A function $u\in C^0 (G\times [0,\infty))$ is called a viscosity subsolution of \eqref{eq:orgMCF} if for every $(\xi_0,t_0)\in G\times (0,\infty)$ and every $(a,p,H)\in \jetp u(\xi_0,t_0)$ it is provided that
\begin{align}
a &\leq \SUM_{i,j=1}^m\Big (\delta_{ij}-\frac{p_ip_j}{\abs{p}^2} \Big)H_{ij}\quad \text{if}\quad p\neq 0,\\
a &\leq \SUM_{i,j=1}^m \delta_{ij}H_{ij}\quad\text{if}\quad p=0.
\end{align}
Viscosity supersolution is defined analogously where  $\leq$ is replaced by $\geq$ and $\jetp u(\xi_0,t_0)$ by $\jetn u(\xi_0,t_0)$. A viscosity solution to \eqref{eq:orgMCF} is a function $u\in C^0(G\times [0,\infty))$ which is both a subsolution and supersolution.
\end{definition}
The condition at the characteristic points is motivated by \eqref{eq:characteristic_gives_Laplace}. Let us explicitly note that alternative definitions are available, as for example given in \cite{dirr2010evolution}.

We explicitly remark that the set of viscosity solutions is a larger set than the set of vanishing viscosity solutions. In other words, a vanishing viscosity solution to \eqref{eq:orgMCF} is the viscosity solution which is the limit of a solution to \eqref{eq:regMCF} as $\epsilon,\delta\rightarrow 0$. Consequently vanishing viscosity solutions are also viscosity solutions (see \cite{citti2016sub}) while the fact that viscosity solutions are also vanishing viscosity solutions will be proved here in this article.

\subsection{Existence and comparison results}

In \cite{evans1991motion} the existence of a vanishing viscosity solution of an
Euclidean mean curvature flow was established under the assumption that the initial condition is identically $1$ at infinity. The same theorem is already known in the two types of groups considered here: Carnot groups and $\se$ (see \cite{capogna2009generalized} for Carnot groups and \cite{citti2016sub} for $\se$).

\begin{theorem}\label{existence}
Assume that $G$ is either a Carnot group or $\se$, and the initial datum $u_0$ is of class $C^1_E(G)$ (i.e., in the Euclidean sense). Then there is a sphere of radius $R$ such that $u_0$ is identically constant out of this sphere, and there is a constant $\tilde C $ such that 
$$ \max (\|u_0 \|_{L^{\infty}(G)},  \|\nabla_E u_0 \|_{L^{\infty}(G)}) \leq \tilde C ,$$
where $\nabla_E$ denotes the standard Euclidean gradient. Then for every compact set $Z$ there is a constant ${\tilde C}(Z)$, such that for every $\delta$ and $\epsilon$ the solution of problem \eqref{eq:regMCF} satisfies 
\begin{align}\label{eq:estimateseps}
\|u^{\eps}_{\delta}(.,t)\|_{L^{\infty}(G)} & \leq \tilde{C},\\
\|\nabla_E u^{\eps}_{\delta}(.,t)\|_{L^{\infty}(G)} & \leq {\tilde C}(Z).
\end{align}
As a consequence, there exists a continuous vanishing viscosity solution $u$ of problem \eqref{eq:orgMCF} which satisfies
\begin{align}\label{eq:estimatesFromFranceschiello}
\|u(.,t)\|_{L^{\infty}(G)} & \leq \tilde{C},\\
\|\nabla_E u(.,t)\|_{L^{\infty}(G)} & \leq \tilde{C}(Z).
\end{align}
\end{theorem}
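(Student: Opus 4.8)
This theorem packages two things: a priori bounds for the regularized problems \eqref{eq:regMCFbounded}--\eqref{eq:regMCF} that are \emph{uniform in $\eps,\delta,R$}, and the double limit $R\to\infty$ followed by $\eps,\delta\to 0$ that produces the vanishing viscosity solution. The plan is to treat them in that order. The existence of a smooth, global-in-time solution $u^{\eps,R}_{\delta}$ of the Dirichlet problem \eqref{eq:regMCFbounded} I would quote from classical theory: with $\eps,\delta,R$ frozen, \eqref{eq:regMCFbounded} is a quasilinear uniformly parabolic equation with smooth coefficients on a smooth bounded domain, so short-time existence of a classical solution is standard and the sup and gradient estimates below rule out any finite-time degeneration.

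\textbf{The sup bound.} The regularized operator has no zeroth order term, so constants solve \eqref{eq:regMCFbounded}; being (for fixed $\eps,\delta$) uniformly parabolic, it obeys the comparison principle, which forces $\min_G u_0\le u^{\eps,R}_{\delta}(\cdot,t)\le\max_G u_0$, hence $\|u^{\eps,R}_{\delta}(\cdot,t)\|_{L^\infty}\le\tilde C$ independently of $\eps,\delta,R,t$. This estimate survives both limits and gives the $L^\infty$ parts of \eqref{eq:estimateseps} and \eqref{eq:estimatesFromFranceschiello}.

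\textbf{The gradient bound} is the heart of the matter, and the step $2$ hypothesis enters here. I would split it into a boundary estimate and an interior one. For the boundary estimate one uses barriers: in a step $2$ Carnot group a half-space (equivalently the Euclidean distance to an affine hyperplane) has non-positive horizontal mean curvature, so its translates and dilates are super- and subsolutions of \eqref{eq:regMCFbounded} (cf.\ \cite{capogna2009generalized}); since $u_0$ is constant near $\partial B(0,R)$, such barriers placed from outside pin $|\nabla_E u^{\eps,R}_{\delta}|$ on $\partial B(0,R)$ uniformly in $\eps,\delta$, and in $\se$ one substitutes the barrier family of \cite{citti2016sub}. For the interior estimate I would run a Bernstein-type argument: with a Euclidean cutoff $\zeta$ supported near the compact set $Z$, the function $\zeta\,|\nabla_E u^{\eps,R}_{\delta}|^2$ satisfies a parabolic differential inequality whose only dangerous lower order terms are produced by the $\xi$-dependence of the coefficients of the $X_i$; by the explicit form \eqref{eq:VFs_representation_carnot}--\eqref{eq:skewsymmetric_W} (and \eqref{eq:seHVfs}--\eqref{eq:seVVfs} in $\se$) those coefficients and their derivatives are bounded on the support of $\zeta$, so the maximum principle yields $\sup_{Z\times[0,T]}|\nabla_E u^{\eps,R}_{\delta}|\le\tilde C(Z)$ with $\tilde C(Z)$ depending only on $Z,T,\tilde C$ and, crucially, not on $\eps,\delta,R$. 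Since the structural vector fields have coefficients growing linearly in $x$, the bound is genuinely local, which is why it depends on $Z$.

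\textbf{Passing to the limits.} With the uniform sup bound and the local Lipschitz bound in hand, interior parabolic estimates for the linearized equation (De Giorgi--Nash--Moser / Krylov--Safonov, then Schauder) give equicontinuity of $\{u^{\eps,R}_{\delta}\}$ on compact subsets of $G\times[0,T]$, uniformly in $\eps,\delta,R$. Arzel\`a--Ascoli extracts, as $R\to\infty$, a locally uniform limit $u^{\eps}_{\delta}$ solving \eqref{eq:regMCF} and inheriting all the bounds; to upgrade to \emph{uniform} convergence I would invoke once more a barrier --- using that the regularized operator is dominated by the sub-Laplacian (the matrix $\delta_{ij}-p_ip_j/(|p|^2+\eps^2)$ has eigenvalues in $(0,1]$) and that $u_0$ is constant outside a fixed ball --- which traps $u^{\eps,R}_{\delta}$ within any prescribed distance of that constant outside a ball whose radius is controlled uniformly in $\eps,\delta,R$. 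Repeating the compactness-plus-barrier argument as $\eps,\delta\to 0$ produces a continuous $u=\lim u^{\eps}_{\delta}$ (uniformly) satisfying \eqref{eq:estimatesFromFranceschiello}, which is by construction a vanishing viscosity solution in the sense of Definition \ref{vvisc}. The main obstacle is the gradient estimate made uniform in $\eps,\delta,R$: the Bernstein computation must be pushed through in the presence of the vanishing viscosity and of the unbounded structural coefficients, and it is precisely the minimality of planes in step $2$ --- which fails in higher step, as recalled in the introduction --- that furnishes the barriers making this possible.
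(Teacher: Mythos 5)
The first thing to note is that the paper does not actually prove this theorem: it is imported from the literature (the text immediately preceding the statement cites \cite{capogna2009generalized} for Carnot groups and \cite{citti2016sub} for $\se$), so there is no in-paper proof to compare yours against. Your sketch does reconstruct the overall architecture of the arguments in those references: the $L^\infty$ bound by comparison with constants, a gradient bound uniform in $\eps,\delta,R$, barriers exploiting the minimality of planes in step $2$ to control the solution near $\partial B(0,R)$ and at infinity, and a two-stage compactness argument. To that extent the plan is sound, and the sup bound and the role of the step-$2$ hypothesis are correctly identified.

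Two steps, however, are done differently in the cited works, and your versions have real gaps. First, the interior gradient estimate in \cite{capogna2009generalized} is not a Bernstein/cutoff computation but a symmetry argument: the regularized operator is invariant under left translations of the group, so the derivative of $u^{\eps,R}_{\delta}$ along any \emph{right}-invariant vector field solves the linearized equation and is controlled by its initial and boundary values via the maximum principle, with no ellipticity needed; the Euclidean gradient on a compact set $Z$ is then recovered because the right-invariant frame spans $TG$ with coefficients bounded on $Z$ (which is exactly why the constant is $\tilde C(Z)$ rather than global). A cutoff Bernstein argument for $\zeta\,|\nabla_E u|^2$ would have to absorb commutator and cutoff error terms using the ellipticity of the coefficient matrix, whose smallest eigenvalue is of order $\eps^2/(|\nabla_\delta u|^2+\eps^2)$; you give no indication of how this is made uniform in $\eps$, and it is not clear that it can be. Second, the equicontinuity you invoke from De Giorgi--Nash--Moser / Krylov--Safonov is \emph{not} uniform in $\eps,\delta$: those estimates depend on the ellipticity ratio, which degenerates as $\eps\to 0$, so they can only serve for the $R\to\infty$ limit at fixed $\eps,\delta$. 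The uniform modulus of continuity needed for the limit $\eps,\delta\to 0$ must instead come from the uniform spatial Lipschitz bound combined with a separate, uniform-in-$\eps$ estimate of continuity in time (obtained, for instance, by comparison of $u^{\eps}_{\delta}(\cdot,t+h)$ with explicit space-time barriers), a point your outline skips entirely.
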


Recall that the regularized equation \eqref{eq:regMCF} 
has no critical points, hence the comparison principle established by Capogna and Citti \cite{capogna2009generalized} for viscosity solutions of the equation in the Carnot group settings is valid for \eqref{eq:regMCF} as well. 

The proof of Theorem \ref{existence} in \cite{citti2016sub} is obtained 
via an approximation, starting with vanishing viscosity solutions, in the 
sense of Definition 1. Using this explicit construction, the following weak version of the 
comparison principle follows:
\begin{theorem}\label{thm:comparisonForVanishingViscosity}
Let $G$ be a Carnot group or $\se$. 
Assume that $u$ and $v$ are vanishing viscosity solutions of \eqref{eq:orgMCF} in accordance with Definition \ref{vvisc}, identically constant out of a compact set. Suppose further

(i) For all $\xi \in G$,  $u(\xi, 0) \leq v(\xi, 0)$,

(ii) $u$ and $v$ are uniformly continuous when restricted to 
$G\times \{t = 0\}$.

Then $u(\xi, t) \leq v (\xi, t)$ for all $\xi \in G$ and $t \geq  0$. 
\end{theorem}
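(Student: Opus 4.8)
The plan is to reduce the comparison of the two vanishing viscosity solutions to a classical parabolic comparison principle for the regularized, nondegenerate problem \eqref{eq:regMCFbounded}, and then to pass to the limit. Write $u_0=u(\cdot,0)$ and $v_0=v(\cdot,0)$, so that $u_0\le v_0$ by hypothesis (i). Fix $T>0$ together with parameters $\eps,\delta,R>0$. By Definition \ref{vvisc} there are classical ($C^\infty$) solutions $u^{\eps,R}_\delta$ and $v^{\eps,R}_\delta$ of \eqref{eq:regMCFbounded} on $B(0,R)\times(0,T]$, with initial data $u_0$ and $v_0$ respectively and with the common Dirichlet datum $0$ on $\partial B(0,R)\times[0,T]$.

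First I would observe that for $\delta>0$ the equation \eqref{eq:regMCFbounded} is uniformly parabolic on $B(0,R)$: the vector fields $X_1,\dots,X_m,X_{(m+1)\delta},\dots,X_{n\delta}$ span the tangent space at every point, and the coefficient matrix $\big(\delta_{ij}-\tfrac{p_ip_j}{|p|^2+\eps^2}\big)_{i,j=1}^n$ is positive definite with eigenvalues in $[\eps^2/(|p|^2+\eps^2),\,1]$. Hence \eqref{eq:regMCFbounded} has no characteristic points, and the classical parabolic maximum principle --- equivalently the comparison principle of Capogna--Citti quoted above --- applies to it. Comparing $u^{\eps,R}_\delta$ with $v^{\eps,R}_\delta$ on the parabolic boundary of $B(0,R)\times(0,T]$: on $\{t=0\}$ one has $u_0\le v_0$, while on $\partial B(0,R)\times[0,T]$ both functions vanish; hypothesis (ii) ensures that these values are attained continuously, so the maximum principle gives $u^{\eps,R}_\delta\le v^{\eps,R}_\delta$ throughout $B(0,R)\times[0,T]$.

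Next I would remove the regularization in two steps. Letting $R\to\infty$, Definition \ref{vvisc} yields $u^{\eps,R}_\delta\to u^\eps_\delta$ and $v^{\eps,R}_\delta\to v^\eps_\delta$ uniformly, with $u^\eps_\delta,v^\eps_\delta$ solving \eqref{eq:regMCF} on $G\times(0,\infty)$; passing to the limit in the inequality gives $u^\eps_\delta\le v^\eps_\delta$ on $G\times[0,T]$. Then, letting $\eps,\delta\to0$ and using $u=\lim_{\eps,\delta\to0}u^\eps_\delta$ and $v=\lim_{\eps,\delta\to0}v^\eps_\delta$ uniformly, I obtain $u\le v$ on $G\times[0,T]$; since $T>0$ is arbitrary, this is the assertion.

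The routine estimates aside, the two limit passages are the only delicate point: they are legitimate precisely because the convergences built into Definition \ref{vvisc} are uniform (so the inequality survives), and because hypothesis (ii) guarantees that the initial datum is attained uniformly up to $t=0$, which is what makes the classical comparison on the bounded domains applicable for data that are merely continuous. I expect the main obstacle to be essentially bookkeeping: the degeneracy of \eqref{eq:orgMCF} at characteristic points --- the genuine difficulty in a direct approach --- never appears here, being bypassed by working with the uniformly elliptic regularization \eqref{eq:regMCFbounded} and taking the limit only at the very end.
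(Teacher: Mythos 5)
Your proposal is correct and follows essentially the same route as the paper: the paper's (two-line) proof likewise invokes the maximum/comparison principle for the uniformly parabolic problem \eqref{eq:regMCFbounded} on the bounded cylinder and then passes this inequality to the limits $R\to\infty$ and $\eps,\delta\to 0$ supplied by Definition \ref{vvisc}. Your write-up simply makes explicit the uniform ellipticity of the regularized operator and the role of the uniform convergences, which the paper leaves implicit.
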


\begin{proof}
The solution $u^{\epsilon,R}_{\delta} $, which is defined  on the bounded cylinder, satisfies the maximum and comparison principles. 
As a consequence these properties are inherited by $u^{\epsilon}_{\delta}=\displaystyle\lim_{R\rightarrow +\infty}u^{\eps, R}_\delta
$ and $u=\displaystyle\lim_{\delta,\,\eps\rightarrow 0} u^{\eps}_{\delta}$.

\end{proof}


\section{Asymptotic behavior  of solutions}\label{sec:asymptoticProp}

In this section we establish 
the asymptotic behavior of solutions of equations \eqref{eq:orgMCF} and \eqref{eq:regMCF}, 
whose Euclidean analogue has been proved by Evans \cite{evans1991motion}, 
and extended to Carnot groups in \cite{capogna2009generalized}. 
The proof is based on a comparison with an ad hoc auxiliary function. 
Since at this stage we have the comparison principle only 
for the vanishing viscosity solutions, the geometrical results hold only for that type of solutions.

\subsection{Asymptotic behavior of the vanishing viscosity solution}

Recall that in a Carnot group of step 2 we use the notation $\xi=(x,\theta)$, 
where $x=(x_1, \cdots, x_m)$ are the variables of the first layer, 
$\theta= (\theta_1, \cdots, \theta_{n-m})$ are the variables of the second layer and in $\se$ we use $\xi=(x,y,\theta)$ with the spatial variables $x,y\in \R$ and the angular variable $\theta\in S^1$. We will write $\abs{\xi}$ in order to denote 
a pseudo-distance of $\xi$ from the origin which can describe a neighborhood of infinity. In particular 
we will denote
\begin{align}\label{eq:hDistanceDefn}
\abs{\xi}= \big( x^2+y^2 \big)^{1/2}\quad\text{in $\se$,}
\quad \abs{\xi}= \big( |x|^4 + |\theta|^2\big)^{1/4}\quad  \text{in a Carnot group.}
\end{align}

In the Euclidean and Carnot settings it is known that if the 
level sets of the initial datum are confined in some bounded region,
then the corresponding level set of the solution remains in the 
same region during the whole mean curvature flow. This result in Carnot groups can be stated by following \cite[Theorem 5.6]{capogna2009generalized} as: 
\begin{theorem}\label{thm:exteriorCylinderHeisenberg}
Assume that $G$ is a Carnot group of step 2, $u_0$ is continuous and there exists a constant $K>0$ such that
\begin{align}
\text{$u_0(\xi)$ is constant for all $\xi\in G$ satisfying $\lvert \xi \rvert\geq K.$}
\end{align}
Then there exists  $R>0$, dependent only on $K$, such that any vanishing viscosity solution of \eqref{eq:orgMCF} satisfies
\begin{align}
\text{$u(\xi,t)$ is constant for all $\xi\in G$ satisfying $\lvert \xi \rvert\geq R$ and for all $t>0$}.	
\end{align}
\end{theorem}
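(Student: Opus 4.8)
The plan is to establish the invariance of the exterior region by the comparison principle for vanishing viscosity solutions (Theorem~\ref{thm:comparisonForVanishingViscosity}), comparing the solution against a family of barrier functions built from the pseudo-distance $\abs{\xi}$. Since $u_0$ is constant, say equal to $c$, for $\abs{\xi}\geq K$, I first reduce to the case $c=0$ by subtracting the constant (the equation~\eqref{eq:orgMCF} is invariant under adding constants). Then I want to squeeze $u$ between two barriers $\pm\psi(\abs{\xi},t)$ that vanish on a slightly larger region $\{\abs{\xi}\geq R\}$ for all $t$, where $R=R(K)$. The natural ansatz is a function of the form $\psi(\xi,t)=\eta(\abs{\xi}^2 - R(t)^2)$ (or a similar radial profile), with $R(t)$ chosen so that the inner ``front'' of the barrier moves outward slowly enough that it never catches up to the relevant level set, yet the barrier is a genuine supersolution of~\eqref{eq:orgMCF}.

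The key computational step is to verify that such a radial function is a supersolution (resp.\ subsolution) of the regularized equation~\eqref{eq:regMCF} uniformly in $\eps,\delta$, or directly of~\eqref{eq:orgMCF} in the viscosity sense. Here one uses the explicit form of the vector fields: in the Carnot case~\eqref{eq:VFs_representation_carnot} with skew-symmetric $W^{(k)}$, one computes $X_i(\abs{\xi}^2)$ and $X_iX_j(\abs{\xi}^2)$ for $\abs{\xi}^2=(\abs{x}^4+\abs{\theta}^2)^{1/2}$; the crucial structural fact — this is why step $2$ and this particular gauge are chosen — is that the horizontal operator applied to a function of $\abs{\xi}$ produces controllable lower-order terms, so that for $\abs{\xi}$ large the parabolic inequality $\partial_t\psi \geq \sum_{i,j}(\delta_{ij}-\tfrac{X_i\psi X_j\psi}{\abs{\nabla_0\psi}^2})X_iX_j\psi$ holds provided $R(t)$ grows at a suitable (at most algebraic) rate. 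One follows here the construction in \cite[Theorem~5.6]{capogna2009generalized}; the $\se$ case is handled analogously using~\eqref{eq:seHVfs}–\eqref{eq:seVVfs} and the pseudo-distance $(x^2+y^2)^{1/2}$, noting that $X_1,X_2$ applied to $x^2+y^2$ again give bounded-coefficient lower-order contributions.

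Once the barrier $\psi$ is shown to be a supersolution with $\psi\geq 0 = u_0$ on $\{\abs{\xi}\geq K\}$ at $t=0$ and $\psi\geq \|u_0\|_\infty \geq u_0$ everywhere at $t=0$ (by taking the profile large on $\{\abs{\xi}\leq K\}$), the comparison principle of Theorem~\ref{thm:comparisonForVanishingViscosity} gives $u\leq\psi$ for all $t>0$; symmetrically $u\geq -\psi$. Since $\psi(\xi,t)=0$ whenever $\abs{\xi}\geq R(t)$, and since we only need the conclusion on a finite time interval — or, more precisely, since $R(t)$ can be arranged to stay bounded on any finite interval, with $R:=\sup_{t}R(t)$ if we want it uniform — we conclude $u(\xi,t)=0$, i.e.\ $u(\xi,t)=c$, for $\abs{\xi}\geq R$. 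The dependence $R=R(K)$ is read off from the barrier construction.

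I expect the main obstacle to be the supersolution verification: one must choose the radial profile $\eta$ and the front speed $\dot R(t)$ so that the degenerate, non-divergence-form operator in~\eqref{eq:orgMCF} (which, unlike the Euclidean Laplacian, mixes horizontal derivatives with the anisotropic coefficients coming from $W^{(k)}$ or from $\cos\theta,\sin\theta$) is dominated by $\partial_t\psi$ for all large $\abs{\xi}$; the characteristic-point case $\nabla_0\psi=0$, where the operator degenerates to $\sum_i X_iX_i\psi$, must be checked separately but is easier since that operator is subelliptic and the sign is favorable. A secondary technical point is that the comparison must really be applied at the level of the regularized problems~\eqref{eq:regMCFbounded}–\eqref{eq:regMCF} (where solutions are smooth and classical) and then passed to the limit, so the barrier inequalities should be robust in $\eps$ and $\delta$, which is why one keeps track of the $\delta$-scaled vertical fields~\eqref{eq:generalVFs} and the $\eps^2$ regularization — both only help the inequality go the right way.
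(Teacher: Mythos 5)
Your proposal is correct and follows essentially the same route as the paper: the paper states this Carnot-group result as \cite[Theorem 5.6]{capogna2009generalized} and proves only the $\se$ analogue (Theorem \ref{thm:exteriorCylinder} via Lemma \ref{lemma:VfunctionLem}), which is exactly your barrier-plus-comparison argument carried out at the level of the regularized problems and passed to the limit in $\eps,\delta$. The only cosmetic difference is that where you introduce a moving front $R(t)$, the paper uses a static profile $\Psi\big(h(\xi)+t\eps\big)-Ct\eps^{1/2}$ whose $t$-dependence is an $O(\eps^{1/2})$ correction vanishing in the limit, which immediately yields an $R$ independent of $t$ — precisely the bounded-front situation you anticipate.
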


Here we prove the same asymptotic behavior in $\se$. Clearly, if $\xi=(x,y,\theta)\in \se$ it is sufficient to check the $(x,y)$ variables in a neighborhood of 
infinity. Precisely we prove
\begin{theorem}\label{thm:exteriorCylinder}
Assume that $u_0\in C^{\infty}(\se)$ and it is constant at the exterior of a cylinder. More precisely, assume there exists a constant $K>0$ such that
\begin{align}
\text{$u_0(\xi)$ is constant for all $\xi\in\se$ satisfying $\lvert \xi \rvert\geq K.$}
\end{align}
Then there is $R>0$, dependent only on $K$, such that the vanishing viscosity solution $u$ of \eqref{eq:orgMCF} satisfies 
\begin{align}
\text{$u(\xi,t)$ is constant for all $\xi\in\se$ satisfying $\lvert \xi \rvert\geq R$ and for all $t>0$}.
\end{align}
\end{theorem}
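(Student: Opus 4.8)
The plan is to mimic the Euclidean/Carnot argument of Evans \cite{evans1991motion} and Capogna--Citti \cite{capogna2009generalized}: construct an explicit radial barrier, depending only on the $(x,y)$ variables of $\se$, that is a supersolution (or subsolution) of the approximated equation \eqref{eq:regMCFbounded}, and then use the comparison principle of Theorem \ref{thm:comparisonForVanishingViscosity} together with the passage to the limit $R\to\infty$, $\eps,\delta\to0$ built into Definition \ref{vvisc}. First I would reduce to the case in which $u_0$ equals a constant $c$ for $\lvert\xi\rvert\ge K$; by replacing $u_0$ with $u_0-c$ (and using that any constant is again a solution, as remarked after Corollary \ref{thm:comparisonPrincipleViscosity}), it is enough to trap the level set $\{u=0\}$ inside a fixed cylinder, i.e.\ to show $u(\xi,t)=c$ for $\lvert\xi\rvert\ge R$.

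The heart of the matter is the barrier. Because in $\se$ the pseudo-distance $\lvert\xi\rvert=(x^2+y^2)^{1/2}$ ignores $\theta$, I would look for a function of the form $\psi(\xi,t)=f\bigl(r^2+g(t)\bigr)$ with $r^2=x^2+y^2$, where $f$ is bounded, monotone, equal to $c$ for large argument, and $g$ is a suitable increasing function of $t$; the classical choice is $r^2+4mt$-type growth, adjusted here for the structure of $X_1,X_2,X_{(3)\delta}$. The key computation is to evaluate the operator
\begin{equation*}
\SUM_{i,j=1}^{n}\Bigl(\delta_{ij}-\frac{X_{i\delta}\psi\,X_{j\delta}\psi}{\lvert\nabla_\delta\psi\rvert^2+\eps^2}\Bigr)X_{i\delta}X_{j\delta}\psi
\end{equation*}
on such a $\psi$. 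Since $X_2=\partial_\theta$ and $X_{(3)\delta}=\delta X_{(3)}$ annihilate a radial function up to controlled lower-order terms (the coefficients of $X_1$ and $X_3$ involve $\cos\theta,\sin\theta$ but the combination $X_1^2+X_3^2$ acting on $f(r^2)$ is essentially the Euclidean Laplacian in $(x,y)$), the operator reduces, on the support of $\nabla\psi$, to something comparable to $f''\cdot(\text{bounded})+f'\cdot(\text{bounded})$ plus an $O(\delta)$ error, uniformly in $\eps,\delta\in(0,1)$. Choosing $f$ with $f'\le0$, $f''$ of the right sign, and $g$ growing fast enough, I would arrange $\partial_t\psi\ge$ (operator applied to $\psi$), so that $\psi$ is a supersolution of \eqref{eq:regMCFbounded} on the region where it is not constant, while $\psi\ge c$ everywhere, $\psi(\xi,0)\ge u_0(\xi)$ for all $\xi$ (possible by taking $g(0)$ large relative to $K$), and $\psi=c$ for $r\ge R$ with $R=R(K)$ independent of $\eps,\delta,t\le T$ on any finite time interval.

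Then the comparison principle for $u^{\eps,R'}_\delta$ on the bounded cylinder (valid since \eqref{eq:regMCFbounded} is uniformly parabolic, as noted before Theorem \ref{thm:comparisonForVanishingViscosity}) gives $u^{\eps,R'}_\delta\le\psi$; letting $R'\to\infty$ and then $\eps,\delta\to0$ via Definition \ref{vvisc} yields $u\le\psi$, hence $u(\xi,t)\le c$ for $\lvert\xi\rvert\ge R$. The symmetric barrier from below (replacing $f$ by a reflected version, or applying the argument to $-u$) gives $u(\xi,t)\ge c$ there, so $u(\xi,t)=c$ for $\lvert\xi\rvert\ge R$ and all $t>0$, with $R$ depending only on $K$. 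The main obstacle I anticipate is the barrier construction itself: one must check that the anisotropic, $\theta$-dependent coefficients of $X_1$ and $X_3$ in $\se$ do not destroy the sign of $\partial_t\psi-(\text{operator})\psi$, and that all error terms are bounded \emph{uniformly} in $\eps$ and $\delta$ on the relevant region $\{\nabla\psi\ne0\}$ — this is where the specific algebra of \eqref{eq:seHVfs}--\eqref{eq:seVVfs} and the scaling \eqref{eq:generalVFs} must be used carefully, and where one may need $\psi$ to depend mildly on a time-dependent radius rather than being a pure function of $r^2+g(t)$.
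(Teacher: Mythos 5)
Your proposal follows essentially the same route as the paper: a barrier depending only on $r^2=x^2+y^2$, composed with a monotone profile and a mild time correction, shown to be a sub/supersolution of the regularized equation \eqref{eq:regMCFbounded} using $X_2h=0$ and the fact that $X_1,X_{3\delta}$ act on radial functions like the (scaled) Euclidean gradient, followed by the comparison principle and the limits $R\to\infty$, $\eps,\delta\to0$. The paper's Lemma \ref{lemma:VfunctionLem} realizes exactly the construction you sketch, with the explicit choice $V^\eps_\delta(\xi,t)=\Psi\bigl(\tfrac{x^2+y^2}{2}+t\eps\bigr)-Ct\eps^{1/2}$, $\Psi(s)=(s-2)^3$ on $[0,2]$, where the inequality $\lvert\Psi''\rvert\le 2\sqrt{3}(\Psi')^{1/2}$ and the $-Ct\eps^{1/2}$ term absorb precisely the $O(\eps^{1/2})$ error you anticipate needing to control.
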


\begin{lemma}\label{lemma:VfunctionLem}Assume that $G=\se$. 
Let us fix $0<\delta,\eps<1 $ and let $h(\xi)=\frac{x^2 + y^2}{2}$. For all $\xi\in G,\; t>0$ 
let us call
\begin{align}\label{eq:psivu}
V^\epsilon_{\delta}(\xi, t) = \Psi\Big(h(\xi)+t\eps\Big) - 
Ct\epsilon^{1/2}\quad \text{with}\quad
\Psi(s)\equiv\bcs
 & 0 \quad\quad\quad\quad(s\geq 2),\\
 &(s-2)^3\quad (0\leq s\leq 2).
\ecs 
\end{align}
Then there exists a choice of the constant $C$ such that the function $V^{\eps}_\delta$ satisfies
\begin{align}\label{Veq}
\partial_t V^{\eps}_{\delta}-&\SUM_{i,j=1}^3\Bigg(\delta_{ij}-\frac{X_{i\delta} V^{\eps}_{\delta}X_{j\delta}V^{\eps}_{\delta}}{\eps^2+\lvert\nabla_\delta V^{\eps}_{\delta} \rvert^2}  \Bigg)(X_{i\delta}X_{j\delta})V^{\eps}_{\delta}\leq 0,
\end{align}
and at the initial time $t=0$
\begin{align}\label{eq:VFunctionConditions}
\begin{split}
\, & V^{\eps}_{\delta}(\xi,0)=0\; \text{ if\spc $h(\xi)\geq 2$},  \quad 
-1\leq V^{\eps}_{\delta}(\xi,0)\leq 0\; \text{ if\spc $1\leq h(\xi)\leq 2$}, \\
 & V^{\eps}_{\delta}(\xi,0)\leq -1\; \text{ if\spc $0\leq h(\xi)\leq 1$}.
\end{split}
\end{align}
\end{lemma}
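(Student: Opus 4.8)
The plan is to verify the two assertions of the lemma separately: first the parabolic differential inequality \eqref{Veq}, then the initial-time conditions \eqref{eq:VFunctionConditions}, which are essentially immediate from the definition of $\Psi$. Throughout I will write $s = h(\xi) + t\eps$, so that $V^\eps_\delta(\xi,t) = \Psi(s) - Ct\eps^{1/2}$, and I will exploit the fact that $\Psi$ and its derivatives vanish identically for $s \geq 2$; hence \eqref{Veq} is trivially satisfied (with equality $0 \leq 0$) on the region $\{s \geq 2\}$, and I only need to work on $\{0 \leq s \leq 2\}$, where $\Psi(s) = (s-2)^3$, $\Psi'(s) = 3(s-2)^2 \geq 0$, $\Psi''(s) = 6(s-2) \leq 0$.

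For the differential inequality I would compute each ingredient in terms of $\Psi$ and $h$. Since $h(\xi) = \tfrac12(x^2+y^2)$ depends only on the spatial variables, using $X_1 = \cos\theta\,\partial_x + \sin\theta\,\partial_y$, $X_2 = \partial_\theta$ (so $X_{2\delta} = \delta X_2$), $X_3 = -\sin\theta\,\partial_x + \cos\theta\,\partial_y$ (so $X_{3\delta} = \delta X_3$), one gets $X_1 h = x\cos\theta + y\sin\theta$, $X_2 h = 0$, $X_3 h = -x\sin\theta + y\cos\theta$, and the key algebraic identity $(X_1 h)^2 + (X_3 h)^2 = x^2 + y^2 = 2h$. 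Then $X_{i\delta} V^\eps_\delta = \Psi'(s)\,X_{i\delta} h$, so $\abs{\nabla_\delta V^\eps_\delta}^2 = (\Psi'(s))^2\big((X_1 h)^2 + \delta^2(X_3 h)^2\big)$ (the $X_2$ term drops out), and $\partial_t V^\eps_\delta = \eps\Psi'(s) - C\eps^{1/2}$. For the second-order terms I would compute $X_{i\delta}X_{j\delta} V^\eps_\delta = \Psi''(s)\,(X_{i\delta}h)(X_{j\delta}h) + \Psi'(s)\,X_{i\delta}X_{j\delta} h$; here one must be slightly careful because $X_1$ and $X_3$ do not commute with multiplication, but $X_{i\delta}X_{j\delta} h$ is an explicit bounded polynomial expression in $x,y,\theta$ (bounded on the relevant region since $0 \le s \le 2$ forces $h \le 2$, i.e. $x^2 + y^2 \le 4$), and the $\Psi''$ contributions combine favorably with the sign of $\Psi'' \le 0$. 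Substituting everything, the quasilinear operator applied to $V^\eps_\delta$ becomes $\Psi'(s)\cdot(\text{bounded terms}) + \Psi''(s)\cdot(\text{nonnegative quadratic terms after the projection})$, and since we subtract this from $\partial_t V^\eps_\delta = \eps\Psi'(s) - C\eps^{1/2}$, the dominant negative term $-C\eps^{1/2}$ will control the $O(1)$ contributions once $C$ is chosen large enough — uniformly in $0 < \delta, \eps < 1$ — provided $\Psi'$ and the coefficients stay bounded on $\{0 \le s \le 2\}$, which they do.

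The main obstacle I anticipate is bookkeeping rather than conceptual: one must check that the terms multiplied by $\Psi'(s)$ remain uniformly bounded (independently of $\delta$ and $\eps$) on the region $\{0 \le h(\xi) \le 2\}$ — in particular the projected Hessian term $\sum_{i,j}\big(\delta_{ij} - \tfrac{X_{i\delta}V\,X_{j\delta}V}{\eps^2 + \abs{\nabla_\delta V}^2}\big)X_{i\delta}X_{j\delta}h$ — and that the contribution of $\Psi''(s) \le 0$ either has a sign that helps or is itself bounded (note $\abs{\Psi''(s)} = 6\abs{s-2} \le 12$ on this region). Because $\Psi'(s) = 3(s-2)^2$ can degenerate to $0$ (at $s=2$) but is bounded above by $12$, and because the subtracted constant $-C\eps^{1/2}$ does not degenerate, the inequality $\partial_t V^\eps_\delta - \mathcal{L}_\delta V^\eps_\delta \le \eps\Psi'(s) - C\eps^{1/2} + C_1\Psi'(s) + C_2 \le (\eps + C_1)\cdot 12 + C_2 - C\eps^{1/2}$ holds once $C$ exceeds a fixed multiple of $C_1 + C_2 + 1$ (here using $\eps^{1/2} \ge \eps$ is not enough on its own, so one keeps $C\eps^{1/2}$ and absorbs; for $\eps$ small this is automatic, and for $\eps$ bounded away from $0$ one simply enlarges $C$). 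Finally, for \eqref{eq:VFunctionConditions}: at $t = 0$ we have $V^\eps_\delta(\xi,0) = \Psi(h(\xi))$, and directly from the definition $\Psi(h) = 0$ for $h \ge 2$, $\Psi(h) = (h-2)^3 \in [-1,0]$ for $1 \le h \le 2$, and $\Psi(h) = (h-2)^3 \le (1-2)^3 = -1$ for $0 \le h \le 1$, which is exactly the claimed three-case bound.
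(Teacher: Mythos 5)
There is a genuine gap in the final absorption step. After the sign-based cancellations (which you identify correctly in outline: the terms $-\bigl(1-\tfrac{(\Psi')^2(X_1h)^2}{\eps^2+(\Psi')^2|\nabla_\delta h|^2}\bigr)\Psi' X_1X_1h$ and its $X_{3\delta}$ analogue are nonpositive and can be dropped, as can the cross term carrying $\Psi''\leq 0$), the surviving positive remainder is
\begin{equation}
\eps\Psi' \;+\; \frac{\eps^2\,\abs{\Psi''}\,\bigl(|X_1h|^2+|X_{3\delta}h|^2\bigr)}{\eps^2+(\Psi')^2\abs{\nabla_\delta h}^2}.
\end{equation}
The first piece is indeed $O(\eps)\leq 12\,\eps^{1/2}$, but the second piece is \emph{not} $O(\eps^{1/2})$ by any crude bound: when $\Psi'$ is small the denominator degenerates to $\eps^2$ and the naive estimate gives only $\abs{\Psi''}\cdot\abs{\nabla_\delta h}^2 = O(1)$. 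Your concluding inequality ``$(\eps+C_1)\cdot 12 + C_2 - C\eps^{1/2}\leq 0$ once $C$ is large'' is false for fixed $C$ as $\eps\to 0$ whenever $C_1,C_2>0$, and the parenthetical ``for $\eps$ small this is automatic'' has the logic reversed: it is precisely for small $\eps$ that $-C\eps^{1/2}$ cannot absorb an $\eps$-independent constant. Since the whole point of the lemma is that the leftover is $O(\eps^{1/2})$ so that a \emph{fixed} $C$ works for all $0<\eps<1$, this step is the heart of the proof and cannot be waved through.

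The paper closes this gap using a structural property of the specific cubic profile, namely the interpolation inequality $\abs{\Psi''}\leq 2\sqrt{3}\,(\Psi')^{1/2}$ (which is why $\Psi(s)=(s-2)^3$ is chosen and not an arbitrary nonpositive cutoff), combined with a two-case analysis. If $\eps>\Psi'$, one bounds the fraction by $2\sqrt{3}\,(\Psi')^{1/2}\abs{\nabla_\delta h}^2\leq 32\sqrt{3}\,\eps^{1/2}$ using $\abs{\nabla_\delta h}^2\leq 16$ on $\{h<2\}$; if $\eps<\Psi'$, one keeps only $(\Psi')^2\abs{\nabla_\delta h}^2$ in the denominator and obtains $2\sqrt{3}\,\eps^2(\Psi')^{-3/2}\leq 2\sqrt{3}\,\eps^{1/2}$. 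Your treatment of the initial-time conditions \eqref{eq:VFunctionConditions} and the reduction to the region $\{0\leq s\leq 2\}$ are correct and agree with the paper, but as written the differential inequality \eqref{Veq} is not established.
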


\begin{proof}Let us first note that $\Psi\in C^{2}\big([0,\infty)\big)$,  and it satisfies
\begin{align}
\Psi^{\prime}(s)=\bcs
0\quad\quad\quad (s\geq 2),\\
3(s-2)^2\quad (0\leq s \leq 2), 
\ecs \text{ and } \quad
\Psi^{{\prime}{\prime}}(s)=\bcs
0\quad\quad\quad (s\geq 2),\\
6(s-2)\quad (0\leq s \leq 2).
\ecs
\end{align}
From this explicit expression it immediately follows that 
 $\Psi^{\prime}\geq 0$, $\Psi^{\prime\prime}\leq 0$ and $\Psi\leq 0$, $\lvert\Psi^{\prime\prime}\rvert\leq 2\sqrt{3} \,\big( \Psi^{\prime}\big)^{1/2}$,\; $\abs{\Psi^{\prime}},\abs{\Psi^{{\prime}{\prime}}}\leq 12 $,
for all $s> 0$. Besides if we call 
$$v^{\eps}_{\delta}(\xi,t)=\Psi\Big(h(\xi)+t\eps\Big),$$ 
then 
\begin{align}\label{eq:secondOrderD}
X_{i\delta}v^{\eps}_{\delta}(\xi,t)=&\Psi^{\prime}(h(\xi) + \epsilon t) X_{i\delta}h,\\
(X_{i\delta}X_{j\delta})v^{\eps}_{\delta}(\xi,t)=&\Psi^{\prime}(h(\xi) + \epsilon t) X_{i\delta}X_{j\delta}h+
\Psi^{{\prime}{\prime}}(h(\xi) + \epsilon t) X_{i\delta}hX_{j\delta}h.
\end{align}
If so we see 
\begin{align}\label{eq:mainEqnAux}
\partial_t v^{\eps}_{\delta}&-\SUM_{i,j=1}^3\Bigg(\delta_{ij}-\frac{X_{i\delta} v^{\eps}_{\delta}X_{j\delta}v^{\eps}_{\delta}}{\eps^2+\lvert\nabla_\delta v^{\eps}_{\delta} \rvert^2}  \Bigg)(X_{i\delta}X_{j\delta})v^{\eps}_{\delta}=\\
& =\eps\Psi^{\prime}-\SUM_{i,j=1}^3\Bigg(\delta_{ij}-\frac{(\Psi')^2 X_{i\delta} hX_{j\delta}h}{\eps^2+(\Psi')^2\lvert\nabla_\delta h\rvert^2}  \Bigg)(\Psi^{\prime} X_{i\delta}X_{j\delta}h+\Psi^{{\prime}{\prime}} X_{i\delta}hX_{j\delta}h)\\
& =\eps\Psi^{\prime}-\Bigg(1-\frac{(\Psi')^2 X_1 hX_1h}{\eps^2+(\Psi')^2\lvert  \nabla_\delta h\rvert^2}  \Bigg)(\Psi^{\prime} X_{1}X_{1}h+\Psi^{{\prime}{\prime}} X_{1}hX_{1}h)\\
&\quad\quad\quad -\Bigg(1-\frac{(\Psi')^2 X_{3\delta} hX_{3\delta}h}{\eps^2+(\Psi')^2\lvert  \nabla_\delta h\rvert^2}  \Bigg)(\Psi^{\prime} X_{3\delta}X_{3\delta}h+\Psi^{{\prime}{\prime}} X_{3\delta}hX_{3\delta}h)\\
&\quad\quad\quad 
+ \frac{2(\Psi')^2 (X_1 h)^2(X_{3\delta}h)^2\Psi^{\prime\prime}}{\eps^2+(\Psi')^2\lvert  \nabla_\delta h\rvert^2}    ,
\end{align}
since $X_2h=X_2X_2h=0$ and $X_1X_{3\delta}h=X_{3\delta}X_1h=0$. 
Using the fact that $\Psi^{\prime\prime}\leq 0$, $\Psi^{\prime}X_1X_1h\geq 0$, $\Psi^{\prime}X_{3\delta}X_{3\delta}h\geq 0$ we obtain 
\begin{align}
\partial_t v^{\eps}_{\delta}-&\SUM_{i,j=1}^3\Bigg(\delta_{ij}-\frac{X_{i\delta} v^{\eps}_{\delta}X_{j\delta}v^{\eps}_{\delta}}{\eps^2+\lvert\nabla_\delta v^{\eps} \rvert^2}  \Bigg)(X_{i\delta}X_{j\delta})v^{\eps}_{\delta}\leq\eps\Psi^{\prime}-\frac{\eps^2\Psi^{\prime\prime}(\lvert X_1 h\rvert^2+\lvert X_{3\delta} h\rvert^2)}{\eps^2+(\Psi')^2\abs{\nabla_{\delta}h}^2}\leq \\ &
\eps\Psi^{\prime}+\frac{\eps^2|\Psi^{\prime\prime}|\,\abs{\nabla_{\delta}h}^2}{\eps^2+(\Psi')^2\abs{\nabla_{\delta}h}^2} \leq \eps\Psi^{\prime}+\frac{2 \sqrt{3}\eps^2|\Psi^{\prime}|^{1/2}\abs{\nabla_{\delta}h}^2}{\eps^2+(\Psi')^2\abs{\nabla_{\delta}h}^2} ,
\end{align}
where we have used also the fact that $\lvert\Psi^{\prime\prime}\rvert\leq 2\sqrt{3} \,\big( \Psi^{\prime}\big)^{1/2}$. 
We can assume here that $h(\xi)<2$ (since otherwise the thesis is trivially true) and in this set $\lvert\nabla_\delta h\rvert^2 \leq 16$.

We have two cases. The first one is $\eps>\Psi^{\prime}$ resulting in
\begin{align}\label{eq:ForCConstant}
\eps\Psi^{\prime}+\frac{2 \sqrt{3}\eps^2|\Psi^{\prime}|^{1/2}\lvert\nabla_\delta h\rvert^2}{\eps^2+(\Psi')^2\lvert\nabla_\delta h\rvert^2} \leq \eps\Psi^{\prime}+
\frac{ 2\sqrt{3}\;\eps^{5/2}\lvert\nabla_\delta h\rvert^2}{\eps^2}\leq \eps^{1/2}\Psi^{\prime}+32\sqrt{3}\eps^{1/2}\leq C \eps^{1/2},
\end{align}
where $C>0$ is a fixed finite number.

In the second case where now $\eps<\Psi^{\prime}$ we find the same estimate as
\begin{align}
\eps\Psi^{\prime}+\frac{2 \sqrt{3}\eps^2|\Psi^{\prime}|^{1/2}\lvert\nabla_\delta h\rvert^2}{\eps^2+(\Psi')^2\lvert\nabla_\delta h\rvert^2}\leq \eps\Psi^{\prime}+\frac{2\sqrt{3}\eps^2(\Psi^{\prime})^{1/2}}{(\Psi^{\prime})^2}\leq \eps\Psi^{\prime}+ \frac{2\sqrt{3}\eps^2}{(\Psi^{\prime})^{3/2}}  \\\leq \eps\Psi^{\prime}+ \frac{2\sqrt{3}\eps^{2}}{\eps^{3/2}}     \leq \eps^{1/2}\Psi^{\prime}+ 2\sqrt{3}\eps^{1/2}\leq C\eps^{1/2}.
\end{align}
Choosing $C$ in this way in the definition of $v^{\eps}_{\delta}$, we immediately deduce that $V^\epsilon_{\delta}$ satisfies 
\begin{align}
\partial_t V^{\eps}_{\delta}-&\SUM_{i,j=1}^3\Bigg(\delta_{ij}-\frac{X_{i\delta} V^{\eps}_{\delta}X_{j\delta}V^{\eps}_{\delta}}{\eps^2+\lvert\nabla_\delta V^{\eps}_{\delta} \rvert^2}  \Bigg)(X_{i\delta}X_{j\delta})V^{\eps}_{\delta}\leq 0,
\end{align}
and at the initial time $t=0$
\begin{align}
\quad \quad \quad \quad &V^{\eps}_{\delta}(\xi,0)=0  &\text{if\spc $h(\xi)\geq 2$},&\quad \quad \quad \quad \quad \quad \quad \quad\\
\quad \quad \quad \quad&-1\leq V^{\eps}_{\delta}(\xi,0)\leq 0  &\text{if\spc $1\leq h(\xi)\leq 2$},&\quad \quad \quad \quad\quad \quad \quad \quad\\
\quad \quad \quad \quad&V^{\eps}_{\delta}(\xi,0)\leq -1 &\text{if\spc $0\leq h(\xi)\leq 1$}.&\quad \quad \quad \quad\quad \quad \quad \quad
\end{align}

\end{proof}

\bigskip

Now we can provide the proof of Theorem \ref{thm:exteriorCylinder}.
\begin{proof}

By Lemma \ref{lemma:VfunctionLem} we know that there exists a function $V^{\epsilon}_{\delta}$ which satisfies
\begin{align}
\partial_t V^{\eps}_{\delta}-&\SUM_{i,j=1}^3\Bigg(\delta_{ij}-\frac{X_{i\delta} V^{\eps}_{\delta}X_{j\delta}V^{\eps}_{\delta}}{\eps^2+\lvert\nabla_\delta V^{\eps}_{\delta} \rvert^2}  \Bigg)(X_{i\delta}X_{j\delta})V^{\eps}_{\delta}\leq 0,
\end{align}
and the conditions given in \eqref{eq:VFunctionConditions} at the initial time $t=0$. 
Up to a rescaling we can assume that $|u_0| \leq 1$, and $u_0 =0$ where $x^2 + y^2\geq 1$. Hence the conditions in \eqref{eq:VFunctionConditions} imply that $V^{\eps}_{\delta}(\xi,0)\leq u_0(\xi)$ for all $\xi\in \se$.
Applying the comparison principle to vanishing viscosity solutions obtained from the regularized mean curvature equation,
we deduce that $V^{\eps}_{\delta}\leq u^{\eps}_{\delta}$ in $\se\times [0,\infty)$ for each $0<\delta,\eps< 1$. This result implies
\begin{align}\label{eq:defnAtLim}
\lim_{\eps\rightarrow 0}V^{\eps}=\Psi\Big(\frac{\abs{x}^2+\abs{y}^2}{2}\Big)=0\leq u(\xi,t),
\end{align}
for all $t\geq 0$ and $\xi=(x,y,\theta)\in \se$ satisfying $(\abs{x}^2+\abs{y}^2)/2\geq 2$. Hence $u\geq 0$ if $(\abs{x}^2+\abs{y}^2)/2\geq 2$. Arguing in the same way with the function 
$\tilde{V}^{\eps}_{\delta}=-V^{\eps}_{\delta}$, we deduce
\begin{align}\label{eq:maxPrincipleGives1}
u\leq 0\quad\text{if}\quad (\abs{x}^2+\abs{y}^2)/2\geq 2.
\end{align}
Hence \eqref{eq:defnAtLim} and \eqref{eq:maxPrincipleGives1} give
$ u(\xi,t)= 0 $ for all $t\in[0,T]$ and $\xi=(x,y,\theta) $ such that $(\abs{x}^2+\abs{y}^2)/2\geq 2$.
\end{proof}

\subsection{Asymptotic behavior of the approximating solutions}

Here we prove that the solutions of problem \eqref{eq:regMCF}, which are obtained as limits of the solutions of problem \eqref{eq:regMCFbounded}, exponentially tend to a constant value.

\begin{theorem}\label{thm:exponentialEstimateThmHeisenberg} Let $G$ be either a Carnot group of step 2 or $\se$, and let $u^\eps_{\delta}$ be the solution   of problem \eqref{eq:regMCF}, obtained as the limit of the solution of problem \eqref{eq:regMCFbounded}. 
For each $0<\delta,\eps<1$ there exist some finite numbers $B,b>0$ independent of $\eps$ and $\delta$ such that
\begin{equation}
\abss{1-u^{\eps}_{\delta}(\xi,t)}\leq B\expp^{-b|\xi|}\quad \text{for all $\xi\in G\times[0,T]$},
\end{equation}
where $T$ is a positive finite number denoting the final time and $\abs{\xi}$ is defined in \eqref{eq:hDistanceDefn}.
\end{theorem}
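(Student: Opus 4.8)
The plan is a barrier argument in the spirit of \cite{evans1991motion,capogna2009generalized}: I will trap $u^{\eps}_{\delta}$ between two explicit functions of the gauge $\abs{\xi}$ (defined in \eqref{eq:hDistanceDefn}) using the comparison principle for the regularized problem \eqref{eq:regMCFbounded}, and then let the radius of the domain go to infinity. It is convenient first to normalize the far-field value of the datum to be $1$: both \eqref{eq:regMCF} and the class of solutions of \eqref{eq:regMCFbounded} are invariant under $u\mapsto u+\mathrm{const}$ (with the boundary datum of \eqref{eq:regMCFbounded} shifted accordingly), so we may assume $u_0\equiv1$ on $\{\abs{\xi}\ge K\}$ and $u^{\eps,R}_{\delta}\equiv1$ on $\partial B(0,R)$; Theorem \ref{existence} still gives $\|u^{\eps}_{\delta}\|_{L^\infty(G)}\le\tilde C$ with $\tilde C\ge1$ independent of $\eps,\delta$. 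It then suffices to produce $B,b>0$, independent of $\eps,\delta$, with $\abss{1-u^{\eps}_{\delta}(\xi,t)}\le B\,\expp^{-b\abs{\xi}}$ on $[0,T]$.

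Fix $b>0$ (say $b=1$) and $R_1>\max(K,1)$, write $\rho=\abs{\xi}$ (smooth on $\{\rho>R_1\}$ in both geometries), and look for $\gamma,B>0$ so that
\begin{equation}
w^{\pm}(\xi,t)=1\pm B\,\expp^{\gamma t}\expp^{-b\rho(\xi)}
\end{equation}
are respectively a (smooth) supersolution and subsolution of \eqref{eq:regMCFbounded} on $\{\rho>R_1\}\times(0,T]$ for all $0<\eps,\delta<1$. Writing $\phi=B\expp^{\gamma t}\expp^{-b\rho}$ and $a_{ij}=\delta_{ij}-X_{i\delta}w\,X_{j\delta}w/(\abs{\nabla_\delta w}^2+\eps^2)$ (positive semidefinite, $\abs{a_{ij}}\le2$, and the same for $w=w^+$ and $w=w^-$), a direct computation using $X_{i\delta}\phi=-b(X_{i\delta}\rho)\phi$ and $\sum_{i,j}a_{ij}X_{i\delta}\rho\,X_{j\delta}\rho=\eps^2\abs{\nabla_\delta\rho}^2/(b^2\phi^2\abs{\nabla_\delta\rho}^2+\eps^2)$ gives
\begin{equation}
\begin{split}
\partial_t w^{\pm}-\sum_{i,j=1}^n a_{ij}\,X_{i\delta}X_{j\delta}w^{\pm}
&=\pm\phi\,\Big(\gamma+b\sum_{i,j=1}^n a_{ij}\,X_{i\delta}X_{j\delta}\rho\\
&\qquad\qquad\qquad-\frac{b^2\eps^2\abs{\nabla_\delta\rho}^2}{b^2\phi^2\abs{\nabla_\delta\rho}^2+\eps^2}\Big).
\end{split}
\end{equation}
On $\{\rho>R_1\}$ one has $\abs{\nabla_\delta\rho}\le C_1$ and $\abs{X_{i\delta}X_{j\delta}\rho}\le C_2$, with $C_1,C_2$ depending only on $R_1$ and \emph{independent of $0<\delta<1$}: for a step-$2$ Carnot group this follows from the homogeneity of the gauge under the intrinsic dilations (so $X_i\rho$ is $0$-homogeneous and $X_iX_j\rho$ has homogeneity $\le-1$, the powers of $\delta<1$ inside $X_{i\delta}$ only helping), and for $\se$ from explicit formulas such as $X_1X_1\abs{\xi}=(y\cos\theta-x\sin\theta)^2/\abs{\xi}^3$. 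Since the last fraction is at most $b^2\abs{\nabla_\delta\rho}^2\le b^2C_1^2$, the parenthesis is $\ge\gamma-2n^2bC_2-b^2C_1^2$; choosing $\gamma$ larger than this (independently of $\eps,\delta$) makes the right-hand side $\ge0$ for $w^+$ and $\le0$ for $w^-$.

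It remains to apply the comparison principle for \eqref{eq:regMCFbounded} (as recalled after Theorem \ref{existence}; here it is the classical parabolic one, the operator being smooth and uniformly parabolic on the relevant compact set) on the annular cylinder $A_R\times(0,T]$ with $A_R=B(0,R)\cap\{\rho>R_1\}$, and $R$ so large that $\partial B(0,R)\subset\{\rho>R_1\}$ (possible because the gauge $\abs{\xi}$ is comparable to the left-invariant distance to the origin for $\abs{\xi}$ large, the $\theta$-component being bounded in $\se$). On the parabolic boundary: at $t=0$, $u_0\equiv1$, so $w^-\le u_0\le w^+$; on $\partial B(0,R)$, $u^{\eps,R}_{\delta}\equiv1$, so $w^-\le1\le w^+$; and on $\{\rho=R_1\}$, $\abss{u^{\eps,R}_{\delta}-1}\le2\tilde C$ while $\phi\ge B\expp^{-bR_1}$, so the choice $B=2\tilde C\expp^{bR_1}$ yields $w^-\le u^{\eps,R}_{\delta}\le w^+$. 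Comparison then gives $\abss{u^{\eps,R}_{\delta}(\xi,t)-1}\le B\expp^{\gamma t}\expp^{-b\rho}$ on $A_R$; letting $R\to\infty$ and using $u^{\eps}_{\delta}=\lim_{R\to\infty}u^{\eps,R}_{\delta}$ extends it to $\{\rho>R_1\}$, while on $\{\rho\le R_1\}$ one has $\abss{u^{\eps}_{\delta}-1}\le2\tilde C=B\expp^{-bR_1}\le B\expp^{-b\rho}$ directly. Absorbing the fixed factor $\expp^{\gamma T}$ into $B$ gives the assertion, with $B,b$ independent of $\eps,\delta$.

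The main obstacle I anticipate is precisely the uniform barrier verification: one needs the bounds on the gradient $\nabla_\delta\abs{\xi}$ and especially on the second-order derivatives $X_{i\delta}X_{j\delta}\abs{\xi}$ on $\{\abs{\xi}>R_1\}$ to be stable as $\delta,\eps\to0$ and to hold simultaneously in both geometries — which rests on the homogeneity of the gauge in the Carnot case and on a geometry-specific (though elementary) computation in $\se$ — together with the correct sign bookkeeping in the barrier identity above. The normalization and the invocation of the comparison principle are routine.
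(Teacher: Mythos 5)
Your proof is correct, and it reaches the stated estimate by a barrier--comparison argument that is structurally the same idea as the paper's but with a genuinely different barrier. The paper compares $1-u^{\eps}_{\delta}$ on all of $G$ with $v^{\eps}_{\delta}=\hat c\,\expp^{-\sigma(2T-\alpha t)h(\xi)}$, where $h$ is a smooth coordinate polynomial ($|x|^2+\sum_s\sqrt{1+\theta_s^2}$ in the Carnot case, $(x^2+y^2)/2$ in $\se$); because $|\nabla_\delta h|$ grows linearly in $x$, the term $\Psi''\,|X_ih|^2\sim\sigma^2(2T-\alpha t)^2|x|^2\Psi$ is unbounded and must be absorbed by the time derivative $\partial_t v^{\eps}_{\delta}=\sigma\alpha h\Psi$ --- this is precisely why the decay rate $\sigma(2T-\alpha t)$ is taken time-dependent and $\sigma,\delta$ small, and the verification also uses the skew-symmetry of the $w^{(s)}$ to kill the off-diagonal contributions in $Q_1$. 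Your barrier $1\pm B\expp^{\gamma t}\expp^{-b\abs{\xi}}$ sidesteps all of this: since $|\nabla_\delta\abs{\xi}|$ is bounded on $\{\abs{\xi}>R_1\}$ (by homogeneity of the gauge in the Carnot case, by direct computation in $\se$), a constant $\gamma$ suffices, at the price of working on an exterior domain with the inner boundary $\{\abs{\xi}=R_1\}$ absorbed by taking $B=2\tilde C\expp^{bR_1}$, and of the $R\to\infty$ passage, both of which you handle correctly. Two points of comparison worth noting. First, the paper's barrier actually delivers Gaussian-type decay $\expp^{-c\abs{\xi}^2}$, which is the form later assumed in the hypotheses of Lemma \ref{lemma:lem41}; your argument gives only the literal statement $\expp^{-b\abs{\xi}}$ (still sufficient for the downstream use after adjusting the formula for $\tilde R_\eps$, but not the hypothesis as written). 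Second, your explicit normalization $u_0\equiv1$ at infinity is the one the statement actually requires: the paper's own assumption \eqref{eq:u0ConstantAssumption} ($u_0=0$ for $\abs{\xi}\geq1$) is inconsistent with its initial comparison $\abs{1-u_0}\leq v^{\eps}_{\delta}(\cdot,0)$, so you have implicitly repaired a slip in the source.
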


\begin{proof}
The proof is based on comparing the function $1-u^{\eps}_{\delta}$ with the auxiliary function 
\begin{equation}v^\eps_{\delta}(\xi,t) = \Psi(h(\xi))\quad \text{where}
\quad \Psi(s)=\hat{c}\expp^{-\sigma(2T-\alpha t)s}, 
\end{equation} 
with $0<\alpha<1$,\; $0<\sigma<\infty$ and constant $\hat{c}=2\expp^{4\sigma T}$. The function $h$ will be a polynomial, and we will need to perform different choices of this function for the first and the second layer of the 
Carnot group and for the $\se$ group. As before we assume that 
\begin{equation}\label{eq:u0ConstantAssumption}
\abs{u_0}\leq 1\;\text{on $G$},\quad u_0=0\quad\text{for all}\quad |\xi|\geq 1.
\end{equation}

Now we proceed by using a procedure similar to the one in the proof of \cite[Theorem 5.1]{bonfiglioli2002uniform}, and we show that both in Carnot groups of step 2 and in $\se$ we have 
\begin{equation}\label{inequality}
\partial_t v^{\eps}_{\delta}-\SUM_{i,j=1}^n\Bigg(\delta_{ij}- \frac{X_{i\delta} v^{\eps}_{\delta}X_{j\delta} v^{\eps}_{\delta}}{\eps^2+\lvert \nabla_\delta v^{\eps}_{\delta}\rvert^2} \Bigg) (X_{i\delta}X_{j\delta})v^{\eps}_{\delta}\geq 0.
\end{equation}

In a Carnot group of step 2 we will consider a function $h(\xi) =|x|^2 + \SUM_{s=1}^{n-m} \sqrt{1 + \theta_s^2} $ where $x$ denotes the horizontal and $\theta$ the vertical variables. Furthermore we will express the horizontal vector fields $X_i$ by using \eqref{eq:VFs_representation_carnot}, and the approximating vertical vector fields $X_{i\delta}$ by using \eqref{eq:vertical_vfs}. In this case we write all the vector fields as:
$$X_j h= 2  x_j + \SUM_{k,s} w_{jk}^{(s)}x_k\frac{\theta_s}{\sqrt{1 + \theta_s^2}},\quad j=1,\dots,m,
$$
$$
 X_i X_jh= 2 \delta_{ij} + \SUM_{s}w_{ji}^{(s)}\frac{\theta_s}{\sqrt{1 + \theta_s^2}} +  \SUM_{p,k,s} w_{ip}^{(s)}w_{jk}^{(s)}\frac{ x_kx_p}{(1 + \theta_s^2)^{3/2}},\quad i,j=1,\dots,m,
$$
$$
X_{j\delta}h=\frac{\delta \theta_{j-m}}{\sqrt{1+\theta_{j-m}^2}},\quad j=m+1,\dots,n,
$$
$$
X_{i\delta}X_{j\delta}h=\frac{\delta_{ij}\delta^2}{(1+\theta_{j-m}^2)^{3/2}}, \quad i,j=m+1,\dots,n,
$$
$$
X_{i\delta}X_{j}h=\frac{\delta}{(1+\theta_{i-m}^2)^{3/2}}\SUM_{k}w_{jk}^{(i)}x_k=X_jX_{i\delta}h,\quad i=m+1,\dots,n,\quad j=1,\dots,m.
$$

We use the derivatives of $v^{\eps}_{\delta}$ computed in \eqref{eq:secondOrderD} and the fact that $X_iX_{j\delta}h=X_{j\delta}X_i h$, then we write the curvature operator on the function 
$v^{\eps}_{\delta}$ as:
\begin{align}
\partial_t v^{\eps}_{\delta}  & -\SUM_{i,j=1}^n\Bigg(\delta_{ij}- \frac{X_{i\delta} v^{\eps}_{\delta}X_{j\delta}v^{\eps}_{\delta}}{\eps^2+\lvert \nabla_\delta v^{\eps}_{\delta}\rvert^2} \Bigg) (X_{i\delta}X_{j\delta})v^{\eps}_{\delta}\\ 
= & \partial_t v^{\eps}_{\delta}
-\SUM_{i,j=1}^m\Bigg(\delta_{ij}-\frac{(\Psi')^2 X_i hX_jh}{\eps^2+(\Psi')^2\lvert\nabla_\delta h\rvert^2}  \Bigg)
(\Psi^{\prime} X_{i}X_{j}h+\Psi^{{\prime}{\prime}} X_{i}hX_{j}h)\\
& -2\SUM_{i=1}^m\SUM_{j=m+1}^{n}\Bigg(-\frac{(\Psi')^2 X_i hX_{j\delta}h}{\eps^2+(\Psi')^2\lvert\nabla_\delta h\rvert^2}  \Bigg)
(\Psi^{\prime} X_{i}X_{j\delta}h+\Psi^{{\prime}{\prime}} X_{i}hX_{j\delta}h)\\
& -\SUM_{i,j=m+1}^{n}\Bigg(\delta_{ij}-\frac{(\Psi')^2 X_{i\delta} hX_{j\delta}h}{\eps^2+(\Psi')^2\lvert\nabla_\delta h\rvert^2}  \Bigg)
(\Psi^{\prime} X_{i\delta}X_{j\delta}h+\Psi^{{\prime}{\prime}} X_{i\delta}hX_{j\delta}h)\\
= & \partial_t v_{\delta}^{\eps}-Q_1-Q_2-Q_3. 
\end{align}

Let us first consider $Q_1$. Due to the symmetry of $\delta_{ij}-\frac{(\Psi')^2 X_i hX_jh}{\eps^2+(\Psi')^2\lvert\nabla_\delta h\rvert^2}$ and the antisymmetry of $w_{ji}^{(s)}$ (note that $w_{ji}(s)'s$ correspond to a skew-symmetric matrix, see \eqref{eq:VFs_representation_carnot} and \eqref{eq:skewsymmetric_W})
we deduce that 
$$
-\SUM_{i,j=1}^m\Bigg(\delta_{ij}-\frac{(\Psi')^2 X_i hX_jh}{\eps^2+(\Psi')^2\lvert\nabla_\delta h\rvert^2}  \Bigg)
\Psi^{\prime}\Big( 2\delta_{ij} + \SUM_s w_{ji}^{(s)}\frac{\theta_s}{\sqrt{1 + \theta_s^2}} +  \SUM_{p,k,s} w_{ip}^{(s)}w_{jk}^{(s)}\frac{ x_kx_p}{(1 + \theta_s^2)^{3/2}}  \Big)= $$
$$
=-\Psi^{\prime}\SUM_{i,j=1}^m\Bigg(\delta_{ij}-\frac{(\Psi')^2 X_i hX_jh}{\eps^2+(\Psi')^2\lvert\nabla_\delta h\rvert^2}  \Bigg)\Big( 2\delta_{ij} +
 \SUM_{p,k,s} w_{ip}^{(s)}w_{jk}^{(s)}\frac{ x_kx_p}{(1 + \theta_s^2)^{3/2}}\Big)  \leq 0, $$
where we use also that $\Psi^{\prime}<0$ and the matrix of coefficients of the equation is positive semidefinite. On the other hand we see by using $\Psi^{\prime\prime}>0$ that
\begin{align}
\begin{split}
\SUM_{i=1}^m & \Bigg(\delta_{ij}-\frac{(\Psi')^2 (X_i h)^2}{\eps^2+(\Psi')^2\lvert\nabla_\delta h\rvert^2}  \Bigg) \Psi^{{\prime}{\prime}} X_{i}h X_{j}h=\SUM_{\substack{i,j=1 \\ i\neq j}}^m \Bigg(-\frac{(\Psi')^2 X_i h X_j h}{\eps^2+(\Psi')^2\lvert\nabla_\delta h\rvert^2}  \Bigg) \Psi^{{\prime}{\prime}} X_{i}hX_{j}h\\
& +\SUM_{i=1}^m\Bigg(1-\frac{(\Psi')^2 (X_i h)^2}{\eps^2+(\Psi')^2\lvert\nabla_\delta h\rvert^2}  \Bigg) \Psi^{{\prime}{\prime}} (X_{i}h)^2\\
& \leq \SUM_{i=1}^m  \Bigg|1-\frac{(\Psi')^2 (X_i h)^2}{\eps^2+(\Psi')^2\lvert\nabla_\delta h\rvert^2}  \Bigg| \Psi^{{\prime}{\prime}} (X_{i}h)^2 \leq \Psi^{{\prime}{\prime}}  \SUM_{i=1}^m |X_i h|^2\leq Z_1 \Psi^{{\prime}{\prime}} |x|^2.
\end{split}
\end{align}
Therefore
\begin{equation}
Q_1\leq  Z_1 \Psi^{{\prime}{\prime}} |x|^2\leq Z_1 \sigma^2 (2T-\alpha t)^2 \Psi \abs{x}^2,
\end{equation}
where $Z_1>0$ is a fixed finite number.

We continue with $Q_2$ and thanks to the antisymmetry of $w_{ik}^{(j)}$ we find that
\begin{align}
\begin{split}
Q_2 & \leq  2 \SUM_{i=1}^m \SUM_{j=m+1}^{m}  \Bigg|\frac{(\Psi')^2 X_i h X_{j\delta} h}{\eps^2+(\Psi')^2\lvert\nabla_\delta h\rvert^2}  \Bigg|\, \abs{\Psi^{\prime} X_{i}X_{j\delta}h}\leq \SUM_{j=m+1}^{n}\SUM_{i=1}^m \abs{\Psi^{\prime}X_iX_{j\delta}h}
\\&  \leq 2 \SUM_{i=1}^m \SUM_{j=m+1}^{n} \abs{\Psi^{\prime}}\,\abs{\frac{\delta}{(1+\theta_{j-m}^2)^{3/2}}\SUM_{k}w_{ik}^{(j)}x_k}\leq 2\delta \abs{x}\abs{\Psi^{\prime}}\leq \delta(\abs{x}^2+1)\sigma\alpha\Psi.
\end{split}
\end{align}

Finally we consider $Q_3$. Note that
\begin{align}
\begin{split}
Q_3\leq & \Bigg|\SUM_{i=m+1}^n\Bigg(\delta_{ii}-\frac{(\Psi')^2 (X_{i\delta} h)^2}{\eps^2+(\Psi')^2\lvert\nabla_\delta h\rvert^2}  \Bigg) \Psi^{{\prime}{\prime}} (X_{i\delta}h)^2-\SUM_{\substack{i,j=m+1 \\ i\neq j}}^{n}\Bigg(\frac{(\Psi')^2 (X_{i\delta} h)^2(X_{j\delta} h)^2}{\eps^2+(\Psi')^2\lvert\nabla_\delta h\rvert^2}  \Bigg) \Psi^{{\prime}{\prime}}\Bigg|
\\&
\leq \Bigg|\SUM_{i=m+1}^n\Bigg(\delta_{ii}-\frac{(\Psi')^2 (X_{i\delta} h)^2}{\eps^2+(\Psi')^2\lvert\nabla_\delta h\rvert^2}  \Bigg) \Psi^{{\prime}{\prime}} (X_{i\delta}h)^2\Bigg|\leq  \SUM_{i=m+1}^{n}\Psi^{\prime\prime}\frac{\delta^2 \theta_{i-m}^2}{1+\theta_{i-m}^2}\leq Z_3 \sigma^2(2T-\alpha t)^2 \Psi,
\end{split}
\end{align}
where $Z_3>0$ is a fixed finite number.

As a consequence 
\begin{align}
\begin{split}
\partial_t v^{\eps}  & -Q_1-Q_2-Q_3\geq\\
&\geq \sigma  \alpha\Psi(|x|^2+1)-Z_1 \sigma^2 (2T-\alpha t)^2 \Psi \abs{x}^2-\delta(\abs{x}^2+1)\sigma\alpha\Psi-Z_3 \sigma^2(2T-\alpha t)^2 \Psi
\geq 0,
\end{split}
\end{align}
if $\sigma$ and $\delta$ are small.


We choose for the $\se$ setting $h(\xi) = \frac{x^2 + y^2}{2}$. Then by using $X_2 h=0$, $\Psi^{\prime}X_1X_1 h<0$, $\Psi^{\prime}X_{3\delta}X_{3\delta} h<0$ and $\Psi^{\prime\prime}>0$ we obtain
\begin{align}\label{eq:soMaxPrinciple}
\begin{split}
\partial_t v^{\eps}_{\delta}   -\SUM_{i,j=1}^3\Bigg(\delta_{ij}- \frac{X_{i\delta} v^{\eps}_{\delta}X_{j\delta}v^{\eps}_{\delta}}{\eps^2+\lvert \nabla_\delta v^{\eps}_{\delta}\rvert^2} \Bigg) (X_{i\delta}X_{j\delta})v^{\eps}_{\delta}
\geq &   \partial_t v^{\eps}_{\delta} - 
\SUM_{i=1}^3 \frac{\eps^2}{\eps^2+(\Psi')^2\lvert\nabla_\delta h\rvert^2}\Psi^{{\prime}{\prime}} (X_{i\delta}h)^2\\
\geq & \sigma  \alpha\Psi|x|^2,
\end{split}
\end{align}
and we conclude as before that \eqref{inequality} is satisfied. Note that  
$$\partial_t(1-u^{\eps}_{\delta})-\SUM_{i,j=1}^3\Bigg(\delta_{ij}- \frac{X_{i\delta} (1-u^{\eps}_{\delta})X_{j\delta}(1-u^{\eps}_{\delta})}{\eps^2+\lvert\nabla_\delta (1-u^{\eps}_{\delta}) \rvert^2}  \Bigg)(X_{i\delta}X_{j\delta})(1-u^{\eps}_{\delta})=0.$$
Recall that $0\leq \abs{1-u_0}\leq 2$ due to \eqref{eq:u0ConstantAssumption}.
Hence, applying the comparison principle for vanishing viscosity solutions (see Theorem \ref{thm:comparisonForVanishingViscosity} and \cite{citti2016sub} for more details) to the functions $1-u^{\eps}_{\delta}$ (as well as $u^{\eps}_{\delta}-1$) and $v^{\eps}_{\delta}$ we find 
\begin{align}\label{eq:finalResultExpEst}
\abs{1-u^{\eps}_{\delta}}\leq v^{\eps}_{\delta}\quad\text{in $G\times [0,T]$}.
\end{align}
The proof is complete. 
\end{proof}

\section{Viscosity and vanishing viscosity solutions}\label{sec:viscosityUniqueness}

In this section we prove Theorem \ref{thm:univiscosityTheorem}, which says that any viscosity solution is a limit of a family of special vanishing viscosity solutions obtained with 
the procedure of \cite{deckelnick2000error}. We provide the proof in the generic setting of Carnot groups of step 2. We will denote any Carnot group of such family by $G\simeq \R^n$ while we will use the standard notation with $\se$ for the rototranslation group. 

Let us now define an exponential distance in the considered groups in terms of the exponential increments given in \eqref{eq:carnot_increment_exp} and \eqref{eq:se_increment_exp}. We fix $\beta$ in such a way that $0<\beta=\delta\leq \eps<1$. Then we define
\begin{equation}\label{eq:distance_functions}
 d_\beta(\xi, \eta) := \Big(e_1^2 +\dots+e_{m}^2+ \beta^2 e_{m+1}^2+\dots+ \beta^2e_{n}^2   \Big)^{1/2},
\end{equation}
and
\begin{equation}\label{eq:d0_d3_defn}
 d_0^2(\xi,\eta):=e_1^2+\dots+e_m^2,\quad d_3^2(\xi,\eta):=\beta^2 e_{m+1}^2+\dots +\beta^2 e_{n}^2.
 \end{equation} 

We fix an initial datum  $u_0$ and denote by $\operatorname{Lip}(u_0)$ its Lipschitz constant with respect to  $d_\beta$. We will denote by $\alpha$, $\gamma$, $M$ the parameters to be specified later, but satisfying
\begin{align}\label{eq:muExpression}
\gamma >2,\quad 0<\beta<1, \quad \mu=\frac{\gamma 4^{\gamma}\operatorname{Lip}(u_0)^{\gamma}}{ M^{\gamma-1}}. 
\end{align}

\begin{remark}
Assume that $\beta$ is fixed. The distance $d_{\beta}(\xi,\eta)$ depends on $\beta$, therefore, so $\operatorname{Lip}(u_0)$ does. In order to remove the dependency of the Lipschitz constant one may perform the following substitutions:
\begin{equation}\label{eq:substitution_u_0}
U(\xi,t)=u(\beta \xi,t),\quad\quad U^{\eps}_{\delta}(\eta)=u^{\eps}_{\delta}(\beta \eta,t),
\end{equation}
and
$$U_0(\xi)=u_0(\beta \xi),\quad\quad U^{\eps}_{0,\delta}(\eta)=u_0(\beta \eta).$$
If we denote by $\overline{\operatorname{Lip}}(u_0)$ the Lipschitz constant independent of $\beta$, that is,
$$\displaystyle\overline{\operatorname{Lip}}(u_0)=\underset{\xi,\eta\in G}{\operatorname{max}}\frac{\abs{u_0(\xi)-u_0(\eta)}}{d_1(\xi,\eta)}=\frac{\abs{u_0(\tilde{\xi})-u_0(\tilde{\eta})}}{d_1(\tilde{\xi},\tilde{\eta})}.$$
We can employ the substitutions given by \eqref{eq:substitution_u_0} and write
$$\displaystyle\operatorname{Lip}(U_0)\leq \frac{\overline{\operatorname{Lip}}(U_0)}{\beta}=\frac{ d_{1}(\beta\tilde{\xi},\beta\tilde{\eta})\overline{\operatorname{Lip}}(u_0)}{\beta d_{1}(\tilde{\xi},\tilde{\eta})}=\frac{ \beta d_{1}(\tilde{\xi},\tilde{\eta})\overline{\operatorname{Lip}}(u_0)}{\beta d_{1}(\tilde{\xi},\tilde{\eta})}=\overline{\operatorname{Lip}}(u_0).$$
Consequently we have the Lipschitz constant of the dilated initial condition, now which is independent of $\beta$ and in the $\se$ case this independence is valid for any $\theta_0$ (see \eqref{eq:theta_0_condition}). Furthermore, if \eqref{eq:contradHypth} is satisfied by the functions $u$ and $u^{\eps}_\delta$, then the dilated versions of those functions satisfy the same inequality with the same constants. Therefore, the solutions from now on can be considered as the dilated versions $U(\xi,t)$ and $U^{\eps}_{\delta}(\eta,t)$. 
\end{remark}

Then we introduce the function
\begin{equation}\label{eq:testFunctionDefnH}
\phi(\xi,\eta,t)=\frac{\mu}{\gamma}\eps^{1-\frac{\gamma}{2}}d_\beta^{\gamma}(\xi, \eta)+\frac{Mt}{2T}\eps^{\alpha}. 
\end{equation}

If  $u$  and $u^\eps_{\delta}$ are the solutions of \eqref{eq:orgMCF} and \eqref{eq:regMCF}, respectively (note that $u$ is continuous while $u^{\eps}_{\delta}$ is smooth), with the same 
initial condition $u(\xi, 0)=u^\eps_{\delta}(\xi, 0)=u_0(\xi)$, we write our test function as follows: 
\begin{equation}\label{eq:defnOmega2}
\om(\xi,\eta,  t)=u(  \xi,  t)-u^\eps_{\delta}( \eta,  t)-\phi( \xi,\eta,t),
\end{equation}
with suitable constants $M\geq 0$ and $0<\eps<1$.

\begin{lemma}\label{lemma:lem41}
Let $u$ and $u^{\eps}_{\delta}$ be two continuous functions such that $u(\xi, 0)=u^{\eps}_{\delta}(\xi, 0), $ for every $\xi \in G$.

Let $d_\beta$ be the distance defined in  \eqref{eq:distance_functions} and $\abs{\xi},\,\abs{\eta}$ be defined as in \eqref{eq:hDistanceDefn}. Let $\omega$ be the test function defined in \eqref{eq:defnOmega2}.
Assume that 
\begin{itemize}
\item{
There exist constants $B_{\eps}$, $b_{\eps}$ $R_\eps>0$ such that for every $|\xi|>R_\eps$
$$
|1- u(\xi, t)|\leq B_{\eps}\expp^{-b_{\eps}\abs{\xi}^2}, \quad 
|1- u^{\eps}_{\delta}(\xi, t)|\leq B_{\eps}\expp^{-b_{\eps}\abs{\xi}^2},
$$}
\item{
There exists a constant $\tilde C>0$, such that for every $\xi\in G$, $t\geq 0$
$$|u(\xi, t)| \leq\tilde C, \quad \quad |u^{\eps}_{\delta}(\xi, t)| \leq\tilde C.$$ }
\item{
\begin{equation}\label{eq:contradHypth} 
\sup_{\xi\in G, 0< t\leq T} (u-u^{\eps}_{\delta})(\xi,t) 
> 
M\eps^{\alpha}.\end{equation}}

\end{itemize}
Then 
$$\sup_{\xi,\eta\in G, \atop 0\leq t\leq T}\om(\xi,\eta, t)= \sup_{d_\beta(\xi, \eta) \leq r,\; |\xi|, |\eta|\leq R_{\eps} \atop 0< t\leq T}\om(\xi, \eta, t)$$
where $ r:= \Big( \frac{2\gamma \tilde{C}}{\mu}\eps^{\frac{\gamma}{2}-1}-\frac{M\gamma}{4\mu}\eps^{\alpha+\frac{\gamma}{2}-1} \Big)^{\frac{1}{\gamma}}$ and
$\tilde{R}_{\eps}:=r+\sqrt{\frac{4 B_{\eps}}{M b_{\eps}}\eps^{-\alpha}}$, and the maximum is attained at a point $(\hat{\xi},\hat{\eta},\hat{t})$ with $\hat{t}>0$.

\end{lemma}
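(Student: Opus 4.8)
The plan is to run the classical localization argument for the maximum of the test function $\om$, in three movements: a positive lower bound for $\sup\om$ extracted from the contradiction hypothesis \eqref{eq:contradHypth}; matching upper bounds for $\om$ outside the region $\{d_\beta(\xi,\eta)\le r\}\cap\{|\xi|,|\eta|\le \tilde R_\eps\}$ that are strictly below that lower bound, so the supremum is forced to be attained inside this (compact) set; and finally a comparison on the slice $t=0$ showing the maximum cannot lie there.

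\emph{Step 1 — a positive lower bound for $\sup\om$.} By \eqref{eq:contradHypth} there is a point $(\xi_\ast,t_\ast)$ with $(u-u^\eps_\delta)(\xi_\ast,t_\ast)>M\eps^\alpha$; since the right-hand side is positive and $u(\cdot,0)=u^\eps_\delta(\cdot,0)$, necessarily $t_\ast\in(0,T]$. Evaluating $\om$ on the diagonal $\xi=\eta=\xi_\ast$, where $d_\beta=0$, gives $\om(\xi_\ast,\xi_\ast,t_\ast)=(u-u^\eps_\delta)(\xi_\ast,t_\ast)-\tfrac{Mt_\ast}{2T}\eps^\alpha>M\eps^\alpha-\tfrac{M}{2}\eps^\alpha=\tfrac{M}{2}\eps^\alpha$. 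Hence $\sup_{G\times G\times[0,T]}\om>\tfrac M2\eps^\alpha>0$.

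\emph{Step 2 — spatial localization.} Split the complement of the claimed region into two pieces. If $d_\beta(\xi,\eta)>r$, bound $u(\xi,t)-u^\eps_\delta(\eta,t)\le 2\tilde C$ and $\phi\ge\tfrac\mu\gamma\eps^{1-\gamma/2}d_\beta^\gamma$; the definition of $r$ is precisely the one for which this becomes $\om(\xi,\eta,t)\le 2\tilde C-\tfrac\mu\gamma\eps^{1-\gamma/2}r^\gamma=\tfrac M4\eps^\alpha<\tfrac M2\eps^\alpha$ (here $r>0$ because $M\eps^\alpha<2\tilde C$, itself a consequence of \eqref{eq:contradHypth} and $|u-u^\eps_\delta|\le 2\tilde C$). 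If instead $d_\beta(\xi,\eta)\le r$ but $\max(|\xi|,|\eta|)>\tilde R_\eps$, one uses a quasi-triangle inequality $\big|\,|\xi|-|\eta|\,\big|\le C_\beta\,d_\beta(\xi,\eta)$: in $\se$ this is immediate with $C_\beta=1$, since $d_\beta^2\ge e_1^2+e_3^2$ equals the squared Euclidean distance of the $(x,y)$-components (the rotation by $\theta_0$ being orthogonal), while in a step-$2$ Carnot group it follows from the homogeneity of $|\cdot|$ together with the skew-symmetry of the matrices $W^{(k)}$, which makes the bilinear correction terms in $e_{m+1},\dots,e_n$ controllable. This forces both $|\xi|$ and $|\eta|$ to exceed $\tilde R_\eps-r=\sqrt{4B_\eps\eps^{-\alpha}/(Mb_\eps)}$ (we may take $R_\eps$ small enough that this exceeds $R_\eps$), so the exponential decay hypothesis applies at both points and yields $u(\xi,t)-u^\eps_\delta(\eta,t)\le B_\eps\big(\expp^{-b_\eps|\xi|^2}+\expp^{-b_\eps|\eta|^2}\big)<2B_\eps\,\expp^{-4B_\eps\eps^{-\alpha}/M}<\tfrac M2\eps^\alpha$, the last step using $\expp^{-s}<1/s$. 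In both cases $\om<\sup\om$, so by continuity of $\om$ and compactness of $\{d_\beta(\xi,\eta)\le r,\ |\xi|,|\eta|\le\tilde R_\eps,\ 0\le t\le T\}$ the supremum is attained there, at a point $(\hxi,\het,\htt)$.

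\emph{Step 3 — the maximum is interior in time.} Suppose $\htt=0$. Since $u(\cdot,0)=u^\eps_\delta(\cdot,0)=u_0$, $\phi(\cdot,\cdot,0)=\tfrac\mu\gamma\eps^{1-\gamma/2}d_\beta^\gamma$, and $u_0$ is $\operatorname{Lip}(u_0)$-Lipschitz with respect to $d_\beta$, we get $\om(\hxi,\het,0)\le\operatorname{Lip}(u_0)\,d_\beta(\hxi,\het)-\tfrac\mu\gamma\eps^{1-\gamma/2}d_\beta(\hxi,\het)^\gamma\le\max_{s\ge 0}\big(\operatorname{Lip}(u_0)\,s-\tfrac\mu\gamma\eps^{1-\gamma/2}s^\gamma\big)$. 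A one-variable optimisation evaluates this maximum at $s_\ast=(\operatorname{Lip}(u_0)/\mu)^{1/(\gamma-1)}\eps^{(\gamma-2)/(2(\gamma-1))}$ to the value $\tfrac{\gamma-1}{(4\gamma)^{\gamma/(\gamma-1)}}\,M\,\eps^{(\gamma-2)/(2(\gamma-1))}$ once $\mu=\gamma 4^\gamma\operatorname{Lip}(u_0)^\gamma/M^{\gamma-1}$ from \eqref{eq:muExpression} is substituted — this is exactly the reason for that normalisation of $\mu$. Since $\gamma>2$ the numerical prefactor is $<\tfrac14$, and $\eps^{(\gamma-2)/(2(\gamma-1))}\le\eps^\alpha$ for $0<\eps<1$ once $\alpha\le\tfrac{\gamma-2}{2(\gamma-1)}$; hence $\om(\hxi,\het,0)<\tfrac M4\eps^\alpha<\sup\om$, contradicting maximality. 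Therefore $\htt>0$, and since the maximizing point lies in $\{d_\beta(\xi,\eta)\le r,\ |\xi|,|\eta|\le\tilde R_\eps,\ 0<t\le T\}$ the identity of the two suprema follows.

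\emph{Main obstacle.} Essentially all the conceptual weight sits in Step 2: one must exploit that $u$ and $u^\eps_\delta$ decay to the \emph{same} constant $1$ at infinity and play this off at the two points $\xi$ and $\eta$ simultaneously, which requires $\xi$ and $\eta$ to be comparably far from the origin once $d_\beta(\xi,\eta)$ is small — hence the need for the quasi-isometry between the homogeneous pseudo-norm $|\cdot|$ and $d_\beta$. In $\se$ this is an equality up to the bounded term $e_2=\sin(\theta_\xi-\theta_\eta)$; in a Carnot group of step $2$ it is the one place where the group law, through its bilinear correction terms, genuinely enters, and tracking how the resulting constant $C_\beta$ interacts with $\beta$ (so that the stated $\tilde R_\eps$ is adequate) is the main bookkeeping point. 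Steps 1 and 3 are then routine once $r$, $\tilde R_\eps$ and $\mu$ are fixed as in the statement.
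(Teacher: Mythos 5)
Your proof follows the paper's own three-step localization: the lower bound $\sup\om>\frac M2\eps^{\alpha}$ from \eqref{eq:contradHypth} evaluated on the diagonal, the exclusion of $\{d_\beta>r\}$ by the choice of $r$, the exclusion of a neighborhood of infinity via the exponential decay to the common constant $1$, and the exclusion of $t=0$ via the Lipschitz bound on $u_0$ and the normalisation of $\mu$. Your Step 3 (one-variable optimisation giving the value $\frac{\gamma-1}{(4\gamma)^{\gamma/(\gamma-1)}}M\eps^{(\gamma-2)/(2(\gamma-1))}$) is an equivalent, slightly cleaner packaging of the paper's two-case split; both versions tacitly need $\alpha\leq\frac{\gamma-2}{2(\gamma-1)}$, which is consistent with the $\alpha$ fixed later in \eqref{eq:sigma_condition_carnot_3}. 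The one place where you go beyond the paper --- the mixed case $\abs{\xi}>\tilde R_\eps$, $\abs{\eta}\leq\tilde R_\eps$, which the paper's written proof skips by treating only the case where both points are far --- is also the place where your argument, as written, does not hold up.

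The quasi-triangle inequality $\abs{\,\abs{\xi}-\abs{\eta}\,}\leq C_\beta\, d_\beta(\xi,\eta)$ is not what you claim. In $\se$ you assert $d_\beta^2\geq e_1^2+e_3^2$, but $d_\beta^2=e_1^2+e_2^2+\beta^2e_3^2$ carries the weight $\beta^2<1$ on $e_3$, so the planar Euclidean distance $\sqrt{e_1^2+e_3^2}$ is only bounded by $\beta^{-1}d_\beta$; with $C_\beta=\beta^{-1}=\eps^{-\sigma}$ the displacement $C_\beta r$ is no longer obviously absorbed by the stated $\tilde R_\eps=r+\sqrt{4B_\eps\eps^{-\alpha}/(Mb_\eps)}$, and one must compare exponents in $\eps$ to see that it still is. In a step-2 Carnot group the inequality is simply false for every constant: taking $\xi$ at the origin and $\eta=(0,\theta)$ gives $e_1=\dots=e_m=0$, hence $d_\beta=\beta\abs{\theta}$, while $\abs{\,\abs{\xi}-\abs{\eta}\,}=\abs{\theta}^{1/2}$, so the ratio blows up as $\theta\to0$; the homogeneous pseudo-norm of \eqref{eq:hDistanceDefn} is only H\"older of order $1/2$ in the vertical direction relative to the Euclidean-type distance $d_\beta$, and the skew-symmetry of the $W^{(k)}$ does not repair this. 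What saves the conclusion is that you only need the implication in the regime $\abs{\xi}\geq\tilde R_\eps\gg1$ and $d_\beta\leq r\ll1$, where one can estimate $\abs{\,\abs{\xi}^4-\abs{\eta}^4\,}$ directly (using $\abs{x_\xi-x_\eta}\leq r$ and $\abs{\theta_\xi-\theta_\eta}\leq r/\beta+O(\abs{x_\xi}r)$) and divide by $\abs{\xi}^3$; this localized estimate, not a global quasi-isometry, is what should be supplied, together with the corresponding check that $\tilde R_\eps$ is adequate. A further small point: $R_\eps$ is a datum of the hypothesis, not a parameter you may ``take small''; what you actually need is that $\tilde R_\eps-C r$ exceeds $R_\eps$, which must be verified for the $\eps$ under consideration.
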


\begin{proof}
 
We first show that the supremum on the whole space is equal to the supremum 
under the condition that $d_{\beta}(\xi,\eta) \leq r$. Indeed using \eqref{eq:defnOmega2} and \eqref{eq:contradHypth} we observe 
\begin{align}\label{eq:bound1Onom}
\sup_{\xi,  \eta\in G, \atop 0\leq  t\leq T}\om(\xi,\eta, t)&
\geq M\eps^{\alpha}-\frac{M}{2T}\eps^{\alpha}T=\frac{M}{2}\eps^{\alpha} .
\end{align}
Note that when $d_{\beta}(\xi,\eta)\geq r$ we have
\begin{align}\label{eq:boundsContradicted}
\om(\xi,\eta, t)\leq &\sup_{ \xi\in G,\,0\leq t\leq T} u (\xi, t)+\sup_{ \eta\in G,\,0\leq t\leq T}\Big(-u^{\eps}_{\delta}(\eta,t)\Big)-\frac{\mu}{\gamma}\eps^{1-\frac{\gamma}{2}}d_\beta(\xi, \eta)^{\gamma}\\
\leq & 2 \tilde{C}-\frac{\mu}{\gamma}\eps^{1-\frac{\gamma}{2}}d_\beta(\xi, \eta)^{\gamma}\leq 
\frac{M}{4}\eps^{\alpha}, 
\end{align}
due to the choice of $r$. 
Consequently we deduce that
\begin{align}
\sup_{\xi,  \eta, \in G,\atop 0\leq t\leq T} \om(\xi,\eta, t)=
\sup_{\substack{\text{$\xi, \eta\in G,\,$  $d_{\beta}(\xi, \eta)\leq r,$}\\ \;0\leq t\leq T}}
\om(\xi,\eta,t).
\end{align}

Now we show that the supremum is not achieved in a neighborhood of infinity. 
Indeed for $\abs{\eta}\geq \tilde{R}_{\eps}$, 
we see by the first assumption that
\begin{align}\label{eq:estForEtaSoln}
\abs{1-u^{\eps}_{\delta}(\eta, t)}&\leq B_{\eps}\expp^{-b_{\eps}\abs{\eta}^2}\leq \frac{B_{\eps}}{b_{\eps}\abs{\eta}^2}   \leq  \frac{M}{8}\eps^{\alpha}.
\end{align}
Analogously 
\begin{align}\label{eq:estForXiSoln}
\abs{1-u(\xi, t)}\leq \frac{M}{8}\eps^{\alpha},
\end{align}
for $\abs{\xi}\geq \tilde{R}_{\eps}$. If
$\abs{\eta}$, $\abs{\xi}\geq \tilde{R}_{\eps}$, then it follows that  
\begin{align}\label{eq:expEstimateForAll}
\om(\xi, \eta, t) & \leq \abss{1-u(\xi, t)}+
\abss{1-u^{\eps}_{\delta}(\eta,t)}\leq \frac{M}{4}\eps^{\alpha}.
\end{align}
Therefore we deduce 
\begin{align}
\sup_{\xi, \eta\in G,\atop \; 0\leq t\leq T}\om(\xi,\eta,t)=
\sup_{\substack{\text{$\xi, \eta \in G,$ }\\  \abs{\xi}\leq \tilde{R}_{\eps},\;\abs{\eta}\leq \tilde{R}_{\eps},\\ 
0\leq t\leq T}}\om(\xi,\eta, t).
\end{align}
We will denote the point 
where $\omega$ is maximum by $(\hat{\xi},\hat{\eta},\hat{t})$. 

Finally we see that $\hat{t}>0$ must hold. Observe for $t=0$ that
\begin{align}\label{eq:ForZeroTContradiction}
\om(\xi,\eta,0)=u_0(\xi)-u_0(\eta)-\frac{\mu}{\gamma}\eps^{1-\frac{\gamma}{2}}d_\beta(\xi,\eta)^{\gamma}\\
\leq d_\beta(\xi,\eta)\operatorname{Lip}(u_0) \Big(1-\frac{\mu}{\operatorname{Lip}(u_0)\gamma }\eps^{1-\frac{\gamma}{2}}d_\beta(\xi,\eta)^{\gamma-1}\Big).
\end{align}

Consider the first case with $d_\beta(\xi,\eta)\operatorname{Lip}(u_0)\leq \frac{M\eps^{\alpha}}{4}$. Then from \eqref{eq:ForZeroTContradiction} we deduce
\begin{align}
\om(\xi,\eta, 0)\leq \frac{M\eps^{\alpha}}{4},
\end{align}
which together with \eqref{eq:contradHypth} ensures that the maximum is not achieved at $t=0$. 

Now consider the second case where $d_\beta(\xi,\eta)\operatorname{Lip}(u_0)> \frac{M\eps^{\alpha}}{4}$. Then we observe that \eqref{eq:muExpression} and \eqref{eq:ForZeroTContradiction} give
\begin{align}\label{eq:tPositiveContradiction}
\om(\xi,\eta, 0)\leq d_\beta(\xi,\eta)\operatorname{Lip}(u_0) \Big(1-\frac{\mu }{\operatorname{Lip}(u_0)\gamma }\eps^{1-\frac{\gamma}{2}}d_\beta(\xi,\eta)^{\gamma-1}\Big)\leq 0,
\end{align}
which contradicts \eqref{eq:bound1Onom} and ensures that $\hat{t}>0$.
\end{proof}

At this point we can determine $\theta_0$, which appears in \eqref{eq:se_increment_exp}. In the $\se$ setting, we will fix 
\begin{equation}\label{eq:theta_0_condition}
\theta_0=\frac{\hat{\theta}_{\xi}+\hat{\theta}_{\eta}}{2},
\end{equation}
so that the orientation components of the points, $\hat{\xi}=(\hat{x}_{\xi},\hat{y}_{\xi},\hat{\theta}_{\xi})$ and $\hat{\eta}=(\hat{x}_{\eta},\hat{y}_{\eta},\hat{\theta}_{\eta})$, which are the points maximizing a test function $\omega(\xi,\eta, t),$ satisfy
\begin{equation}\label{eq:symmetry_trick}
\hat{\theta}_{\xi}-\theta_0=\hat{\theta}_{\xi}-\frac{\hat{\theta}_{\xi}+\hat{\theta}_{\eta}}{2} =
\frac{\hat{\theta}_{\xi}-\hat{\theta}_{\eta}}{2} =
-(\hat{\theta}_\eta-\theta_0).
\end{equation}

In order to simplify the computations in the proof of Theorem \ref{thm:univiscosityTheorem} we state here some properties of the 
derivatives of the function $\phi$. We will denote the vector fields by $X_{i\delta}^\xi$ and $X_{i\delta}^\eta$ while 
we take the derivatives with respect to $\xi$ and $\eta$, respectively. For the sake of simplicity we will write $\phi$ instead of $\phi(\hat{\xi},\hat{\eta},\hat{t})$ and $d_{\beta}$, $d_{0}$, $d_{3}$ instead of $d_{\beta}(\hat{\xi},\hat{\eta})$, $d_{0}(\hat{\xi},\hat{\eta})$, $d_{3}(\hat{\xi},\hat{\eta})$ from now on, as long as explicit notation is not specifically required. Furthermore we will use the notation $\tilde{K}$ in order denote a fixed finite positive number which is not necessarily the same each time it appears.

%

Let us first collect some properties of the first derivatives of the function $\phi$. 
\begin{lemma}\label{rem:fistOrderEst_both}Assume that $d_\beta\leq 1$. 
At the point $(\hat{\xi},\hat{\eta},\hat{t})$, if $\operatorname{deg}i=1$, we have
\begin{equation}\label{firstderivphi}
 |X_{i}^{\xi}\phi|,\,|X_{i}^{\eta}\phi|\leq  \tilde{K}\mu  \eps^{1-\frac{\gamma}2} d_\beta^{\gamma-2}d_0, 
\quad |X^\xi_i \phi+ X^\eta_i\phi|\leq \tilde{K}\mu\eps^{1-\frac{\gamma}2} d_\beta^{\gamma-1}d_0\beta,
\end{equation}
if $\operatorname{deg}i=2$
\begin{equation}\label{remarkderivative}
|X_{i\delta}^{\xi}\phi|,\,|X_{i\delta}^{\eta}\phi|\leq  \tilde{K}\mu  \eps^{1-\frac{\gamma}2}d_\beta^{\gamma-2}d_0\delta(d_\beta + \beta),\quad
|X^\xi_{i\delta} \phi+ X^\eta_{i\delta}\phi|\leq \tilde{K}\mu\eps^{1-\frac{\gamma}2} d_\beta^{\gamma}\delta.
\end{equation}

Finally, for $d_0$ and $d_3$ defined in \eqref{eq:d0_d3_defn}, the following estimate of the gradient holds
\begin{equation}\label{remarknabla}
|\nabla_0 ^{\zeta} \phi| \geq \mu\epsilon^{1-\frac{\gamma}{2}} d^{\gamma -2}_\beta d_0 , \quad |\nabla^\eta_{\delta} \phi |^2\geq \gamma^2 \mu^2\eps^{2-\gamma}(d_0^{2\gamma-2}+\beta^2\delta^2d_3^{2\gamma-2}). \end{equation}
\end{lemma}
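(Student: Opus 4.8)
The plan is to compute the horizontal and vertical derivatives of $\phi$ directly from the definition \eqref{eq:testFunctionDefnH} and then bound each term using the structure of the exponential increments \eqref{eq:carnot_increment_exp}, \eqref{eq:se_increment_exp}, the skew-symmetry \eqref{eq:skewsymmetric_W}, and the symmetry choice \eqref{eq:theta_0_condition}–\eqref{eq:symmetry_trick} in the $\se$ case. Since the $\frac{Mt}{2T}\eps^{\alpha}$ term contributes nothing to spatial derivatives, everything reduces to differentiating $\frac{\mu}{\gamma}\eps^{1-\gamma/2}d_\beta^{\gamma}(\xi,\eta)$. The chain rule gives $X_{i\delta}^{\zeta}\phi = \frac{\mu}{\gamma}\eps^{1-\gamma/2}\,\gamma\, d_\beta^{\gamma-2}\,\tfrac12 X_{i\delta}^{\zeta}\big(d_\beta^2\big)$ for $\zeta\in\{\xi,\eta\}$, so the whole lemma comes down to controlling $X_{i\delta}^{\zeta}(d_\beta^2)$ and $X_{i\delta}^{\xi}(d_\beta^2)+X_{i\delta}^{\eta}(d_\beta^2)$.

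First I would treat the Carnot case. Writing $d_\beta^2 = \sum_{l=1}^m e_l^2 + \beta^2\sum_{k=1}^{n-m}e_{m+k}^2$ with the $e_l$ as in \eqref{eq:carnot_increment_exp}, one computes for $\operatorname{deg}i=1$ that $X_i^{\xi}e_l = \delta_{il}$ for $l\le m$, while $X_i^{\xi}e_{m+k}$ is linear in the $x$-variables with coefficients $w^{(k)}_{\cdot\cdot}$; the key point is that $X_i^{\xi}e_{m+k} + X_i^{\eta}e_{m+k}$ telescopes to something of order $d_0\beta$ after using skew-symmetry, whereas $X_i^{\xi}e_l + X_i^{\eta}e_l = 0$ for $l\le m$. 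This yields the first line of \eqref{firstderivphi}: the individual derivative is $O(\mu\eps^{1-\gamma/2}d_\beta^{\gamma-2}d_0)$ (the factor $d_0$ coming from $\big|\sum_l e_l X_i^\xi e_l\big|\le d_0$ plus the vertical contribution, which is lower order), and the sum picks up the extra $d_\beta\beta$. For $\operatorname{deg}i=2$ we have $X_{i\delta}^{\zeta} = \delta\,\partial_{\theta_{i-m}}^{\zeta}$, so $X_{i\delta}^{\zeta}(d_\beta^2) = 2\delta\beta^2 e_{m+i-m}\cdot(\pm1) + (\text{cross terms from }\partial_\theta\text{ hitting the }e_l\text{'s})$; here the $\beta^2 e$ part is bounded by $\delta\beta^2 d_3 \le \delta\beta d_\beta$, giving \eqref{remarkderivative}, and the sum $X_{i\delta}^{\xi}+X_{i\delta}^{\eta}$ of the $\partial_\theta e_{m+k}$ terms cancels exactly (the second-layer increments being antisymmetric under swapping $\xi,\eta$), leaving only the $O(d_\beta^2\delta)$ piece which is absorbed into $\tilde K\mu\eps^{1-\gamma/2}d_\beta^{\gamma}\delta$. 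The $\se$ case is handled identically but is actually easier: with $\theta_0$ chosen as in \eqref{eq:theta_0_condition} the increments $e_1,e_2,e_3$ of \eqref{eq:se_increment_exp} are essentially symmetric/antisymmetric in $\hat\xi,\hat\eta$, and $X_2=\partial_\theta$, $e_2=\sin(\theta_\xi-\theta_\eta)$, so the cross-derivative bookkeeping is short.

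For the lower bound \eqref{remarknabla}, the point is that $\nabla_0^\zeta\phi = \mu\eps^{1-\gamma/2}d_\beta^{\gamma-2}\big(\tfrac12\nabla_0^\zeta(d_\beta^2)\big)$ and the horizontal part of $\tfrac12\nabla_0^\zeta(d_\beta^2)$ has a component equal to $e_l$ in each direction $l\le m$ (from $X_l^\zeta e_l=\pm1$) plus lower-order vertical corrections, so its Euclidean norm is bounded below by $d_0$ up to the harmless corrections — giving $|\nabla_0^\zeta\phi|\ge \mu\eps^{1-\gamma/2}d_\beta^{\gamma-2}d_0$. For the full approximating gradient at $\eta$, $|\nabla_\delta^\eta\phi|^2$ separates into the horizontal block, which is $\ge \gamma^2\mu^2\eps^{2-\gamma}d_0^{2\gamma-2}$ by the same reasoning, and the vertical block $\sum_{i>m}(X_{i\delta}^\eta\phi)^2$, whose dominant contribution is the diagonal term $\big(\mu\eps^{1-\gamma/2}d_\beta^{\gamma-2}\cdot\delta\beta^2 e_{m+k}\big)^2$ summed over $k$, i.e.\ of order $\mu^2\eps^{2-\gamma}\beta^2\delta^2 d_3^{2\gamma-2}$; adding the two blocks gives the stated bound. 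The main obstacle I anticipate is purely organizational rather than deep: keeping the two geometries in parallel and making sure every cross-term produced by $\partial_\theta$ acting on the $x$-dependent parts of the increments is correctly identified as either vanishing under the $\xi\leftrightarrow\eta$ symmetrization or else of an order (an extra power of $d_\beta$ or $\delta$) that can be swept into $\tilde K$; the skew-symmetry \eqref{eq:skewsymmetric_W} and the symmetry trick \eqref{eq:symmetry_trick} are exactly what make these cancellations work, so the proof should foreground those two facts.
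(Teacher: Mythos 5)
Your proposal follows essentially the same route as the paper's proof: differentiate $\phi$ via the chain rule to reduce everything to the first derivatives $X_{i\delta}^{\zeta}e_j$ of the increments, exploit the exact cancellation $X_i^\xi e_j+X_i^\eta e_j=0$ for $\deg i=\deg j=1$ (via skew-symmetry of $W^{(k)}$ in Carnot groups and the choice $\theta_0=(\hat\theta_\xi+\hat\theta_\eta)/2$ in $\se$), and let the $\beta^2$ weight on the second-layer increments supply the extra factors of $\beta$ and $d_\beta$ in the summed estimates and the lower bounds. The only caveat is a minor bookkeeping slip — the sum $X_i^\xi e_{m+k}+X_i^\eta e_{m+k}$ is itself $O(d_0)$, with the $\beta$ coming from $\beta^2|e_{m+k}|\le\beta d_3$ rather than from the telescoping — but this does not affect the final bounds and the paper's own computation is the same.
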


\begin{proof}
The proof is a direct computation, which is similar in Carnot groups and in $\se$. 
We first show if $\operatorname{deg}i=\operatorname{deg}j=1$ and $|\theta_\xi- \theta_\eta|\leq \pi/4$, that 
\begin{equation}\label{stima11}
|X_{ i}^{\xi}e_j|, |X_{ i}^{\eta}e_j|\leq 1, \quad 
|X_{ i}^{\xi}e_j| \geq  \delta_{ij}/2,
\quad |X_{ i}^{\xi}e_j + X_{ i}^{\eta}e_j|=0.
\end{equation}

Indeed, using the expression \eqref{eq:carnot_increment_exp}, we obtain in Carnot groups 
$$
X_{ i}^{\xi}e_j = \delta_{ij},\quad  X_{ i}^{\eta}e_j = - \delta_{ij}\quad
\text{ if }
\operatorname{deg}i=\operatorname{deg}j=1.$$
In $\se$, we use the increments given by \eqref{eq:se_increment_exp} and find
\begin{align}
X^\xi_1 e_1= & \cos(\frac{\hat{\theta}_{\xi}-\hat{\theta}_{\eta}}{2}), & X^\xi_1 e_2=X^\xi_2 e_1=0, \quad\quad\quad\quad\quad   & X^\xi_2 e_2 = 1 , \\
X^\eta_1 e_1 = & -\cos(\frac{\hat{\theta}_{\xi}-\hat{\theta}_{\eta}}{2}), & X^\eta_1 e_2=X^\eta_2 e_1=0, \quad\quad\quad\quad\quad  & X^\eta_2 e_2 =-1.
\end{align}

When $\operatorname{deg}i=1,\;\operatorname{deg}j=2,$ we have 
\begin{equation}\label{stima12}
|X_{ i}^{\xi}e_j|, |X_{ i}^{\eta}e_j|\leq  d_\beta,\quad |X_{ i}^{\xi}e_j + X_{ i}^{\eta}e_j|\leq d_\beta,
\end{equation}
where we use as structure constants for $\se$ the constants of the Heisenberg group. 
Indeed we obtain in Carnot groups 
$$X_{ i}^{\xi}e_j  =\SUM_{l=1}^{m}w_{il}^{(j-m)}e_l = 
 X_{ i}^{\eta}e_j,$$
and in $\se $ we find: 
$$X^\xi_1 e_3= X^\eta_1 e_3=\sin(\frac{\hat{\theta}_{\xi}-\hat{\theta}_{\eta}}{2} ) = \frac{e_2}{2}+O(\abs{e_2}^3), \quad \quad X^\xi_2 e_3 = X^\eta_2 e_3 = 0. $$

From here and the expression of the function $\phi$ and the distance function $d_\beta$ 
in \eqref{eq:distance_functions} we obtain
\begin{equation}\label{derivaphi}
X_{ i\delta}^{\zeta} \phi = \mu\epsilon^{1-\frac{\gamma}{2}} d^{\gamma -2}_\beta
\Big(\sum_{j=1 }^m e_j X_{ i\delta}^{\zeta}e_j +  \beta^2 \sum_{j=m+1 }^n e_j X_{i\delta}^{\zeta}e_j\Big),
\end{equation}
where $\zeta$ can be either $\xi$ or $\eta$. 
Applying \eqref{stima11} and \eqref{stima12} we obtain 
$$ 
|X_{ i\delta}^{\zeta} \phi |\leq  \tilde{K}\mu\epsilon^{1-\frac{\gamma}{2}} d^{\gamma -2}_\beta
\Big(d_0 +  \beta d_0 d_3\Big),
$$
and using the assumption 
$d_\beta\leq 1$, the first inequality in \eqref{firstderivphi} for $\deg i=1$ follows. 
Moreover, using the last equality in \eqref{stima11}, we find

\begin{align}\label{sommaderivaphi}
\begin{split}
(X_{ i}^{\xi} + X_{ i}^{\eta}) \phi = & 2\gamma\mu\epsilon^{1-\frac{\gamma}{2}} d^{\gamma -2}_\beta \beta^2 \sum_{j=m+1 }^n e_j  \SUM_{l=1}^{m}w_{il}^{(j-m)}e_l\quad\quad\text{in Carnot groups},\\
(X_{ i}^{\xi} + X_{ i}^{\eta}) \phi = &  \gamma\mu\epsilon^{1-\frac{\gamma}{2}} d^{\gamma -2}_\beta \delta_{i1}\beta^2 e_3 \sin(\frac{\hat{\theta}_{\xi}-\hat{\theta}_{\eta}}{2} )\quad\quad\text{in $\se$}.
\end{split}
\end{align}

By using \eqref{derivaphi}, and the fact that $|X_{ i\delta}^{\zeta}e_j| \geq \delta_{ij}/2 $ we find for $\deg i=1$, 
$$
|X_{ i}^{\zeta} \phi| = \mu\epsilon^{1-\frac{\gamma}{2}} d^{\gamma -2}_\beta
\Big|\sum_{j=1 }^m e_j X_{ i }^{\zeta}e_j +  \beta^2 \sum_{j=m+1 }^n e_j X_{i}^{\zeta}e_j\Big|
\geq \mu\epsilon^{1-\frac{\gamma}{2}} d^{\gamma -2}_\beta\Big(
\frac{|e_i|}{2} -  \beta d_\beta d_0\Big),
$$
and therefore
\begin{equation}	\label{stimagrad}
|\nabla_0 ^{\zeta} \phi| \geq \mu\epsilon^{1-\frac{\gamma}{2}} d^{\gamma -2}_\beta d_0. 
\end{equation}

\bigskip

If $\operatorname{deg}i=2$ and $\operatorname{deg}j=1,$ 
we obtain 
\begin{equation}\label{stima21}
X_{\delta i}^{\xi}e_j =O(\delta d_\beta).
\end{equation}
Indeed in the case of Carnot groups we have 
$$X_{\delta i}^{\xi}e_j=X_{\delta i}^{\eta}e_j=0,$$
while in $\se$ we obtain
$$X^\xi_{3\delta} e_1 =X^\eta_{3\delta} e_1=-\delta\sin(\frac{\hat{\theta}_{\xi}-\hat{\theta}_{\eta}}{2} )=-\frac{\delta e_2}{2}+O(\delta d_{\beta}^3)  ,\quad \quad X^\xi_{3\delta} e_2 =X^\eta_{3\delta} e_2 =  0. $$
Finally if $\operatorname{deg}i=2=\operatorname{deg}j=2,$
\begin{equation}\label{stima22}
X_{\delta i}^{\xi}e_j  =\delta \delta_{ij} + O(\delta d^2_\beta), \quad  X_{\delta i}^{\eta}e_j  =-\delta \delta_{ij}+O(\delta d^2_\beta), \quad {\color{green}|} X_{\delta i}^{\xi}e_j + X_{\delta i}^{\eta}e_j{\color{green}|}= 0.
\end{equation}
In Carnot groups
\begin{equation}\label{eq:Carnot_case_0_deg2}
X^\xi_{i\delta}e_j=-X^\eta_{i\delta}e_j=\delta\,\delta_{ij}.
\end{equation}
In $\se$, we obtain
\begin{equation}\label{eq:se2_0_deg2}
X^\xi_{3\delta} e_3 = \delta \cos(\frac{\hat{\theta}_{\xi}-\hat{\theta}_{\eta}}{2} )=  \delta + O(\delta e^2_2), \quad 
X^\eta_{3\delta} e_3=-\delta \cos(\frac{\hat{\theta}_{\xi}-\hat{\theta}_{\eta}}{2})=-\delta+O(\delta e^2_2).
\end{equation}

Now we plug \eqref{stima21} and \eqref{stima22} 
in  \eqref{derivaphi} and we obtain for $\operatorname{deg} i = 2$, and $\zeta=\xi$ or $\zeta=\eta$
\begin{equation}\label{eq:deg2_firstOrederStim}
|X_{ i\delta}^{\zeta} \phi| \leq 
\mu\epsilon^{1-\frac{\gamma}{2}}  d^{\gamma -2}_\beta
\Big|\sum_{j=1 }^m e_j X_{ i\delta}^{\zeta}e_j +  \beta^2 \sum_{j=m+1 }^n e_j X_{i\delta}^{\zeta}e_j\Big|\leq
\tilde{K}\mu\epsilon^{1-\frac{\gamma}{2}} d^{\gamma-2}_\beta d_0
\delta.
\end{equation}
Always from \eqref{derivaphi} we have 
$$|X_{ i\delta}^{\xi} \phi + X_{ i\delta}^{\eta} \phi | \leq 
\mu\epsilon^{1-\frac{\gamma}{2}} d^{\gamma -2}_\beta
\Big|  \sum_{j=1 }^m e_j X_{ i\delta}^{\zeta}e_j \Big|\leq
\tilde{K}\mu\epsilon^{1-\frac{\gamma}{2}} d^{\gamma}_\beta
\delta.
$$
Then the inequalities in \eqref{remarkderivative} are directly found.

Finally applying \eqref{derivaphi} with $\deg i=2$ and the fact that 
$|X_{i\delta}^{\zeta}e_j|\geq \delta_{ij}/2,$ if $\deg i=\deg j=2$ we have
$$|X_{ i\delta}^{\zeta} \phi| \geq 
\mu\epsilon^{1-\frac{\gamma}{2}}  d^{\gamma -2}_\beta
\Big|\sum_{j=1 }^m e_j X_{ i\delta}^{\zeta}e_j +  \beta^2 \sum_{j=m+1 }^n e_j X_{i\delta}^{\zeta}e_j\Big|\geq
\mu\epsilon^{1-\frac{\gamma}{2}}  d^{\gamma -2}_\beta\Big(
 \beta^2 \delta | e_i |/2- \Big|\sum_{j=1 }^m e_j X_{ i\delta}^{\zeta}e_j  \Big|	\Big)$$
$$\geq 
\mu\epsilon^{1-\frac{\gamma}{2}}  d^{\gamma -2}_\beta\Big(
 \beta^2 \delta | e_i |/2- \delta d_\beta d_0 \Big).
$$
Also using \eqref{stimagrad} we obtain
$$
|\nabla_\delta ^{\zeta} \phi| \geq \mu\epsilon^{1-\frac{\gamma}{2}} d^{\gamma -2}_\beta (d_0 + 
\beta \delta d_3).$$

%
\end{proof}

\begin{lemma}\label{lem:zero_gradient_implies}
At the point $(\hxi,\het,\htt)$, the following holds:
$$\abs{\nabla_0^{\xi}\phi}^2=0 \implies \abs{\nabla_0^{\eta}\phi}^2=0.$$
\end{lemma}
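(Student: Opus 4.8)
The plan is to show that the hypothesis forces all the horizontal increments $e_1,\dots,e_m$ to vanish at $(\hxi,\het,\htt)$, and then to read off $|\nabla_0^{\eta}\phi|^2=0$ directly from the explicit first-derivative formula \eqref{derivaphi}. First I would dispose of the degenerate case $d_\beta(\hxi,\het)=0$: then every increment $e_i$ is zero, so by \eqref{derivaphi} both $\nabla_0^{\xi}\phi$ and $\nabla_0^{\eta}\phi$ vanish and there is nothing to prove. So assume from now on $d_\beta:=d_\beta(\hxi,\het)>0$; since $\gamma>2$ this gives $d_\beta^{\gamma-2}>0$, and as $\mu,\eps^{1-\frac{\gamma}{2}}>0$ too, the common prefactor $\mu\eps^{1-\frac{\gamma}{2}}d_\beta^{\gamma-2}$ occurring in \eqref{derivaphi} is strictly positive; in particular the vanishing of a component of $\nabla_0^{\zeta}\phi$ is equivalent to the vanishing of the corresponding bracketed expression in \eqref{derivaphi}.

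The key step is the gradient lower bound already established in Lemma~\ref{rem:fistOrderEst_both} (see \eqref{remarknabla}, or \eqref{stimagrad}): $|\nabla_0^{\xi}\phi|\geq \mu\eps^{1-\frac{\gamma}{2}}d_\beta^{\gamma-2}d_0$, with $d_0$ as in \eqref{eq:d0_d3_defn}. Combined with $|\nabla_0^{\xi}\phi|^2=0$ and the strict positivity of the prefactor, this forces $d_0=0$, i.e. $e_1=\dots=e_m=0$: all horizontal increments vanish at $(\hxi,\het,\htt)$. In a Carnot group this already finishes the argument: for $\operatorname{deg}i=1$, \eqref{derivaphi} reads $X_i^{\eta}\phi=\mu\eps^{1-\frac{\gamma}{2}}d_\beta^{\gamma-2}\big(\sum_{j\leq m}e_j X_i^{\eta}e_j+\beta^2\sum_{j>m}e_j X_i^{\eta}e_j\big)$, and since $X_i^{\eta}e_j=-\delta_{ij}$ for $\operatorname{deg}j=1$ and $X_i^{\eta}e_j=\sum_{l=1}^m w_{il}^{(j-m)}e_l$ for $\operatorname{deg}j=2$, both sums vanish once $e_1=\dots=e_m=0$; hence $X_i^{\eta}\phi=0$ for all $i\leq m$, that is $|\nabla_0^{\eta}\phi|^2=0$. (An equivalent route avoiding the lower bound: $|\nabla_0^{\xi}\phi|^2=0$ is equivalent to $(I+\beta^2 M)e'=0$ with $e'=(e_1,\dots,e_m)$ and $M_{il}=\sum_{j>m}e_j w_{il}^{(j-m)}$, which is skew-symmetric by \eqref{eq:skewsymmetric_W}; pairing with $e'$ gives $|e'|^2=0$, whence $e'=0$ and then $X_i^{\eta}\phi=0$.)

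In $\se$ one additional observation is needed, because the coefficient $X_1^{\eta}e_3=\sin\!\big(\tfrac{\hat{\theta}_{\xi}-\hat{\theta}_{\eta}}{2}\big)$ entering \eqref{derivaphi} is not linear in the horizontal increments, so $d_0=0$ alone does not annihilate $X_1^{\eta}\phi$. After the step above we have $e_1=e_2=0$; in addition $|\nabla_0^{\xi}\phi|^2=0$ yields $X_1^{\xi}\phi=0$, which by \eqref{derivaphi} (using $X_1^{\xi}e_1=\cos(\tfrac{\hat{\theta}_{\xi}-\hat{\theta}_{\eta}}{2})$, $X_1^{\xi}e_3=\sin(\tfrac{\hat{\theta}_{\xi}-\hat{\theta}_{\eta}}{2})$ and $e_1=0$) forces $\beta^2 e_3\sin\!\big(\tfrac{\hat{\theta}_{\xi}-\hat{\theta}_{\eta}}{2}\big)=0$. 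Then $X_1^{\eta}\phi=\mu\eps^{1-\frac{\gamma}{2}}d_\beta^{\gamma-2}\big(-e_1\cos(\tfrac{\hat{\theta}_{\xi}-\hat{\theta}_{\eta}}{2})+\beta^2 e_3\sin(\tfrac{\hat{\theta}_{\xi}-\hat{\theta}_{\eta}}{2})\big)=0$ and $X_2^{\eta}\phi=-\mu\eps^{1-\frac{\gamma}{2}}d_\beta^{\gamma-2}e_2=0$, so $|\nabla_0^{\eta}\phi|^2=0$.

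The only slightly delicate point I anticipate is precisely this last one in the $\se$ setting: recognizing that $d_0=0$ does not by itself kill $X_1^{\eta}\phi$ and that it must be supplemented by the vanishing of $X_1^{\xi}\phi$ to remove the $\beta^2 e_3\sin(\cdot)$ contribution. An equivalent way to organize it is to note $e_2=\sin(\hat{\theta}_{\xi}-\hat{\theta}_{\eta})=2\sin(\tfrac{\hat{\theta}_{\xi}-\hat{\theta}_{\eta}}{2})\cos(\tfrac{\hat{\theta}_{\xi}-\hat{\theta}_{\eta}}{2})$, so that $X_2^{\xi}\phi=0$ forces one of these two trigonometric factors to vanish, and then a one-line case distinction again gives $X_1^{\eta}\phi=0$. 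Everything else is a direct substitution into \eqref{derivaphi}.
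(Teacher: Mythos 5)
Your proof is correct and follows essentially the same route as the paper's: both arguments reduce the claim to showing that the horizontal increments $e_1,\dots,e_m$ (and, in $\se$, the term $\beta^2 e_3\sin(\tfrac{\hat\theta_\xi-\hat\theta_\eta}{2})$) vanish, and then read off $\nabla_0^\eta\phi=0$ from \eqref{derivaphi}. The only differences are organizational --- you obtain $d_0=0$ from the lower bound \eqref{stimagrad} and then kill the $e_3$ term using $X_1^\xi\phi=0$, whereas the paper first deduces $\hat\theta_\xi=\hat\theta_\eta$ from $X_2^\xi\phi=0$ --- and your explicit handling of the degenerate case $d_\beta=0$ is a small but welcome addition.
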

\begin{proof}
In Carnot groups, the zero gradient at $(\hxi,\het,\htt)$ implies
\begin{equation}
d_0^{2}=0, 
\end{equation}
therefore
\begin{equation}
X_{i}^{\xi}\phi=0,\quad X_{i}^{\eta}\phi=0,\quad\text{if $\operatorname{deg}i=1$}. 
\end{equation}
Furthermore \eqref{eq:carnot_gradient_equal} gives 
\begin{equation}
\abs{\nabla_0^{\eta}\phi}^2=0.
\end{equation}
In $\se$, we first notice that $X_1^{\xi}\phi(\hat{\xi},\hat{\eta},\hat{t})=0$ and $X_2^{\xi}\phi(\hat{\xi},\hat{\eta},\hat{t})=0$ imply
\begin{equation}\label{eq:zero_gradient_observation}
e_1\cos(\frac{\hat{\theta}_{\xi}-\hat{\theta}_{\eta}}{2})+\beta^2 e_3\sin(\frac{\hat{\theta}_{\xi}-\hat{\theta}_{\eta}}{2} )=0,\quad \hat{\theta}_\xi-\hat{\theta}_\eta=0,
\end{equation}
respectively, as a result of $\hat{\theta}_{\xi}-\hat{\theta}_{\eta}\ll 1$. From \eqref{eq:zero_gradient_observation}, we see also that
$$e_1=0,$$
therefore
$$\nabla_0^{\eta}\phi=0.$$
\end{proof}

Let us now collect some properties of the second derivatives of the function $\phi$. 
\begin{lemma}\label{rem:SecondOrderEst_Lemma_both}
Assume that $d_{\beta}\leq 1$. At the point $(\hat{\xi},\hat{\eta},\hat{t})$,
if $\operatorname{deg}i=\operatorname{deg}j=1$, 
we have 
\begin{equation}\label{rem:secondOrderEst1}
|X^\eta_{i}X^\eta_{j} \phi | \leq  \tilde{K}\mu\eps^{1-\frac{\gamma}2}d_{\beta}^{\gamma-2}.
\end{equation}

If  $\deg i=2$  or $\deg j=2$, we have
\begin{equation}\label{derivateij12}
X^\eta_{i\delta}X^\eta_{j\delta} \phi \leq  \tilde{K}\mu\eps^{1-\frac{\gamma}2}\delta d_{\beta}^{\gamma-2}.\end{equation}

If $\operatorname{deg}i=\operatorname{deg}j=2$
\begin{equation}\label{derivateij22}
X^\eta_{i\delta}X^\eta_{j\delta} \phi  \leq  \tilde{K}\mu\eps^{1-\frac{\gamma}{2}}\delta^2 d_{\beta}^{\gamma-2}. 
\end{equation}


Finally, in both settings for $i$ and $j$ of any degree the following holds:
\begin{align}\label{rem:for_the_Hessian_estimates}
\begin{split}
|X^\xi_{i}(X^\xi_{j} \phi+X^\eta_{j}\phi)|, \quad  |X^\eta_{i}(X^\xi_{j}\phi +X^\eta_{j}\phi)| & \leq \tilde{K}\mu\eps^{1-\frac{\gamma}{2}}\beta d_{\beta}^{\gamma-2}.
\end{split}
\end{align}
\end{lemma}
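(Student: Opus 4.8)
The plan is to compute all the stated bounds by differentiating the explicit formula \eqref{derivaphi} once more, reusing the first- and second-order increment estimates \eqref{stima11}--\eqref{stima22} already established in Lemma \ref{rem:fistOrderEst_both}. Recall from \eqref{derivaphi} that, writing $F:=\mu\epsilon^{1-\frac{\gamma}{2}}d_\beta^{\gamma-2}$, we have $X_{i\delta}^\zeta\phi = F\cdot\bigl(\sum_{j\le m}e_jX_{i\delta}^\zeta e_j+\beta^2\sum_{j>m}e_jX_{i\delta}^\zeta e_j\bigr)$. Applying a second horizontal (or $\delta$-rescaled) derivative $X_{k\delta}^\eta$ produces three types of terms: (i) $(X_{k\delta}^\eta F)$ times the bracket, where $X_{k\delta}^\eta d_\beta^{\gamma-2}$ carries a factor $d_\beta^{\gamma-3}$ together with a first derivative of $d_\beta$, which itself is $O(d_\beta)$ by the first-order estimates, so this term is $O(\mu\epsilon^{1-\gamma/2}d_\beta^{\gamma-2})$; (ii) $F$ times $\sum_j (X_{k\delta}^\eta e_j)(X_{i\delta}^\eta e_j)$, a product of two first-order increment derivatives; (iii) $F$ times $\sum_j e_j\,X_{k\delta}^\eta X_{i\delta}^\eta e_j$, involving the second increment derivatives \eqref{stima11}, \eqref{stima12}, \eqref{stima21}, \eqref{stima22}. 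One then just bookkeeps the powers of $\delta$ and $\beta$ coming from the degrees of $i$ and $j$.

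Concretely, for \eqref{rem:secondOrderEst1} with $\deg i=\deg j=1$, both increment factors in (ii) are $O(1)$ and the second increment derivatives in (iii) are $O(1)$ for deg-$1$ components and $O(d_\beta)$ for deg-$2$ components (the latter killed by the extra $\beta^2$ and $d_\beta\le 1$), so everything is $\tilde K\mu\epsilon^{1-\gamma/2}d_\beta^{\gamma-2}$. For \eqref{derivateij12}, where at least one of $i,j$ has degree $2$, the rescaling \eqref{eq:generalVFs} inserts exactly one factor $\delta$ (the deg-$2$ field is $\delta$ times the unnormalized one), and \eqref{stima21}--\eqref{stima22} show this is the only $\delta$ picked up, giving $\tilde K\mu\epsilon^{1-\gamma/2}\delta d_\beta^{\gamma-2}$; for \eqref{derivateij22}, both fields are degree $2$, producing $\delta^2$. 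For \eqref{rem:for_the_Hessian_estimates}, the point is that $X_j^\xi\phi+X_j^\eta\phi$ was already shown in \eqref{sommaderivaphi} to lose the leading deg-$1$ contribution — it equals $F$ times a sum over only the deg-$2$ indices $j>m$ of $e_j\sum_l w_{il}^{(j-m)}e_l$ (resp. the single $\se$ term with $\sin\bigl(\tfrac{\hat\theta_\xi-\hat\theta_\eta}{2}\bigr)$) — so it already carries a factor $\beta^2 d_\beta\cdot d_0\le\beta d_\beta^{2}$, i.e. it is $O(\mu\epsilon^{1-\gamma/2}\beta d_\beta^{\gamma-1})$; differentiating once more costs one power of $d_\beta$, yielding $\tilde K\mu\epsilon^{1-\gamma/2}\beta d_\beta^{\gamma-2}$.

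I would organize the write-up by the three cases $(\deg i,\deg j)\in\{(1,1),(1,2)\text{ or }(2,1),(2,2)\}$, in each case expanding $X_{k\delta}^\eta X_{i\delta}^\eta\phi$ via the product rule on \eqref{derivaphi}, then substituting the increment bounds term by term and using $d_0,d_3\le d_\beta\le 1$ and $0<\beta=\delta\le\epsilon<1$ to absorb everything into $\tilde K$. The $\se$ case differs only in that the explicit $\cos,\sin$ expressions from \eqref{eq:se_increment_exp} and the choice \eqref{eq:theta_0_condition}, \eqref{eq:symmetry_trick} replace the structure constants $w_{il}^{(k)}$; since $|\hat\theta_\xi-\hat\theta_\eta|\ll1$ one has $\sin(\tfrac{\hat\theta_\xi-\hat\theta_\eta}{2})=O(d_\beta)$ and $\cos(\tfrac{\hat\theta_\xi-\hat\theta_\eta}{2})=1+O(d_\beta^2)$, and all derivatives of these in $\theta$ are again $O(1)$, so the same bookkeeping goes through verbatim.

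The main obstacle is purely organizational rather than conceptual: one must be careful about which derivatives fall on the prefactor $d_\beta^{\gamma-2}$ versus on the increments $e_j$, since a derivative of $d_\beta^{\gamma-2}$ produces $d_\beta^{\gamma-3}$ and one needs the compensating $O(d_\beta)$ from the first-order estimates \eqref{firstderivphi} (valid precisely because $d_\beta\le1$) to recover the clean exponent $\gamma-2$; and for \eqref{rem:for_the_Hessian_estimates} one must genuinely use the cancellation in \eqref{sommaderivaphi} and not merely the triangle inequality on the two separate Hessians \eqref{rem:secondOrderEst1}, which would only give $d_\beta^{\gamma-2}$ without the crucial extra factor $\beta$. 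I expect this last estimate to be the delicate one, because the cancellation must survive differentiation — i.e. one differentiates the already-cancelled expression \eqref{sommaderivaphi}, not the uncancelled $X_i^\xi\phi$ and $X_i^\eta\phi$ individually.
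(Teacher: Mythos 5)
Your proposal follows essentially the same route as the paper: it differentiates \eqref{derivaphi} by the product rule (the paper records the result as the explicit formula \eqref{eq:secondOrder_derivative_expression}), substitutes the first- and second-order increment estimates \eqref{stima11}--\eqref{stima22} with the same $\delta$- and $\beta$-bookkeeping by degree, and, crucially, obtains \eqref{rem:for_the_Hessian_estimates} by differentiating the already-cancelled identity \eqref{sommaderivaphi} rather than adding the two separate Hessian bounds --- exactly as the paper does. The only quibble is your parenthetical claim that a first derivative of $d_\beta$ is $O(d_\beta)$ (it is only $O(1)$, since the $O(d_\beta)$ numerator is divided by $d_\beta$), but this does not affect the stated bounds because the accompanying bracket already supplies the needed factor of $d_\beta$.
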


\begin{proof}

It is a direct computation using the expression of the derivatives 
already computed in the previous lemma together with the second order derivatives of the increments. 
Note that for $\zeta_1,\zeta_2\in \{\xi, \eta \}$ a second order derivative of $\phi$ is written as
\begin{align}\label{eq:secondOrder_derivative_expression}
\begin{split}
X^{\zeta_1}_{i\delta} X^{\zeta_2}_{j\delta}\phi= & (\gamma-2)\gamma\mu\eps^{1-\frac{\gamma}{2}}d_{\beta}^{\gamma-4}\Big( \sum_{k=1 }^m e_k X_{i\delta}^{\zeta_1}e_k+\beta^2\sum_{k=m+1 }^n e_k X_{i\delta}^{\zeta_1}e_k \Big)\Big( \sum_{k=1 }^m e_k X_{i\delta}^{\zeta_2}e_k+\beta^2\sum_{k=m+1 }^n e_k X_{i\delta}^{\zeta_2}e_k \Big)\\
 &+\gamma\mu\eps^{1-\frac{\gamma}{2}}d_{\beta}^{\gamma-2}\sum_{k=1 }^m\Big(X_{i\delta}^{\zeta_1}e_k X_{j\delta}^{\zeta_2}e_k +  e_k X_{i\delta}^{\zeta_1} X_{j\delta}^{\zeta_2}e_k\Big)\\
 & +\gamma\mu\eps^{1-\frac{\gamma}{2}}\beta^2 d_{\beta}^{\gamma-2} \sum_{k=m+1}^n \Big(X_{i\delta}^{\zeta_1}e_k X_{j\delta}^{\zeta_2}e_k + e_k X_{i\delta}^{\zeta_1}X_{j\delta}^{\zeta_2}e_k \Big).
\end{split}
\end{align}

We plug the first and second order derivatives of the increments in \eqref{eq:secondOrder_derivative_expression}. Then for $\deg i=\deg j=1$ and $\zeta$ denoting either $\xi$ or $\eta$,  we see
\begin{equation}
\label{eq:est_sec_dist}
\sum_{k=1 }^m\Big(X_{i}^{\zeta}e_k X_{ j}^{\zeta}e_k +  e_k X_{i}^{\zeta} X_{ j}^{\zeta}e_k\Big)\leq 
\tilde{K}(1 + d^2_\beta)\leq \tilde{K}, \quad  \beta^2 \sum_{k=m+1 }^n \Big(X_{i}^{\zeta}e_k X_{ j}^{\zeta}e_k + e_k X_{i}^{\zeta}X_{ j}^{\zeta}e_k \Big)\leq \beta d_\beta,
\end{equation}
and we obtain by using also Lemma \ref{rem:fistOrderEst_both} that
$$X_i^{\zeta}X_j^{\zeta}\phi\leq \tilde{K}\mu  
\eps^{1-\frac{\gamma}2} d_\beta^{\gamma-4}(d_\beta+\beta d_{\beta}^{2})^2  + \tilde{K}\mu  
\eps^{1-\frac{\gamma}2} d_\beta^{\gamma-2}(1 + \beta d_\beta)\leq \tilde{K}\mu\eps^{1-\frac{\gamma}{2}}d_{\beta}^{\gamma-2},
$$
which gives the estimate in \eqref{rem:secondOrderEst1}.


The estimates given by \eqref{derivateij12} and \eqref{derivateij22} are found in the same way via direct substitution of the first and second order derivatives of the increments in \eqref{eq:secondOrder_derivative_expression}.

Finally the estimate provided in \eqref{rem:for_the_Hessian_estimates} is found via differentiating \eqref{sommaderivaphi} straightforwardly. 
\end{proof}

Let us now provide the proof of our main result, Theorem \ref{thm:univiscosityTheorem}. 

\begin{proof}
We argue by contradiction and assume that for  each $M\geq 0$ there exists an $\eps$ such that $0<\delta<\eps<1$ and
\begin{equation}
\sup_{\xi\in G, 0< t\leq T}\lvert (u-u^{\eps}_{\delta})(\xi,t)\rvert >M\eps^{\alpha}.
\end{equation}
Due to Theorem \ref{existence} and Theorem \ref{thm:exponentialEstimateThmHeisenberg} we can apply Lemma \ref{lemma:lem41} and deduce that the function $\omega$ 
defined in \eqref{eq:defnOmega2} attains its maximum at an interior point 
$(\hxi,\het, \htt)$ satisfying 
\begin{equation}\label{eq:distEstimateAbsol}
d_\beta(\hat \xi, \hat \eta)\leq \Big( \frac{2\gamma \tilde{C}}{\mu}\eps^{\frac{\gamma}{2}-1}-\frac{M\gamma}{4\mu}\eps^{\alpha+\frac{\gamma}{2}-1} \Big)^{\frac{1}{\gamma}}.
\end{equation}

Since the function $u$ is only continuous, we replace the derivatives of $u$ with the superjet $\mathcal{P}^{2,+}u(\xi,t)$ (see Subsection \ref{sec:superjetDefns} for the superjet definition). Note that $u^{\eps}_{\delta}$ is smooth so we have $-X_i^{\eta}u^{\eps}_{\delta}(\het,\htt)= X_i^{\eta}\phi(\hxi,\het,\htt)$  
and $-X_i^{\eta}X_j^{\eta}u^{\eps}_{\delta}(\het,\htt)\leq X_i^{\eta}X_j^{\eta}\phi(\hxi,\het,\htt)$. 
We know by \cite[Theorem 8.3]{crandall1992user} that for every $\rho>0$ there exist symmetric matrices $H= (H_{ij})_{i,j, =1, \cdots, m}$ and $Y = (Y_{ij})_{i,j, =1, \cdots, m}$ such that
\bit
\item[(i)] $\quad\quad\quad (a, \nabla_0^{\xi}\phi(\hxi,\het,\htt),H)\in\jetp u(\hxi,\htt),$

$\quad\quad\quad(b, \nabla_{\delta}^{\eta}\phi(\hxi,\het,\htt),Y)\in -\jetn u^{\eps}_{\delta}(\het,\htt)$,

\item[(ii)] $\quad\quad\quad  a+b=\frac{M}{2T}\eps^{\alpha}$,
\item[(iii)]  $\quad\quad\quad -\Big(\frac{1}{\rho}+ \vert\vert A \vert\vert\Big) I\leq \begin{pmatrix}
H & 0 \\ 0 & Y 
\end{pmatrix}\leq A+\rho A^2$,
\eit   
where $A$ is defined as follows:
\begin{equation}
A=\begin{pmatrix}
B_{\xi\xi} & B_{_{\xi\eta}} \\ B_{\eta\xi} & B_{\eta\eta}
\end{pmatrix},
 \quad B_{\zeta_1\zeta_2}=
\begin{pmatrix}
X_i^{\zeta_1}X_i^{\zeta_2}\phi & \frac{1}{2}( X_i^{\zeta_1}X_j^{\zeta_1}\phi+X_j^{\zeta_1}X_i^{\zeta_2}\phi)\\
\frac{1}{2}( X_i^{\zeta_1}X_j^{\zeta_2}\phi+X_j^{\zeta_1}X_i^{\zeta_2}\phi) & X_j^{\zeta_1}X_j^{\zeta_2}\phi
\end{pmatrix}_{\text{$i,j=1,\dots,m$}\atop \text{$i\neq j$}}
\end{equation}
where $\zeta_1,\zeta_2\in\{\xi,\eta \}$.

We employ \eqref{rem:for_the_Hessian_estimates} and (iii) to see that for all $z\in \R^m$ 
\begin{align}\label{eq:condition_3_form_CIL}
z^T(H+Y)z=
\begin{pmatrix}
z^T, & z^T
\end{pmatrix}
\begin{pmatrix}
H & 0\\
0 & Y
\end{pmatrix}
\begin{pmatrix}
z \\ z
\end{pmatrix}
\leq\begin{pmatrix}
z^T, & z^T
\end{pmatrix}
(A+\rho A^2)
\begin{pmatrix}
z \\ z
\end{pmatrix}.
\end{align}

Due to \eqref{rem:for_the_Hessian_estimates}, this implies that for $0<\beta< 1$ and as $\rho$ goes to zero we have
\begin{equation}\label{eq:HY_estimate}
z^T(H+Y)z\leq \tilde{K}\mu\eps^{1-\frac{\gamma}{2}}\beta d_{\beta}^{\gamma-2}\abs{z}^2.
\end{equation}

Since $(a,\nabla_0^{\xi}\phi(\hxi,\het,\htt),H)\in\jetp 
u(\hxi,\htt)$ and $u$ is a viscosity subsolution of \eqref{eq:orgMCF} we have
\begin{align}\label{eq:viscDefinitionInProof}
\begin{split}
a & -\SUM_{i,j=1}^m\Big(\delta_{ij}-\frac{X_i^{\xi}\phi(\hxi,\het,\htt) X_j^{\xi}\phi(\hxi,\het,\htt)}{\abs{\nabla_0^{\xi}\phi(\hxi,\het,\htt)}^2}   \Big){\color{black}H}_{ij}\leq 0,\quad \text{if}\quad \nabla_0^{\xi}{\phi(\hxi,\het,
\htt)}\neq 0,\\
a & -\SUM_{i,j=1}^m\delta_{ij}{\color{black}H}_{ij}\leq 0,\quad\text{if}\quad \nabla_0^{\xi}{\phi(\hxi,\het,
\htt)}= 0.
\end{split}
\end{align}

Moreover since $u^{\eps}_{\delta}$ is smooth we have $\partial_t u^{\eps}_{\delta}(\het,\htt)=-b$. It is a solution and it satisfies
\begin{align}\label{eq:uesolutionDefinitionInProof}
\begin{split}
b  & = - \partial_t u^{\eps}_{\delta}(\het,\htt)=
-\SUM_{i,j=1}^n\Big(\delta_{ij}-\frac{X_{i\delta}^{\eta} u^{\eps}_{\delta}(\hxi,\het,\htt) X_{j\delta}^{\eta} u^{\eps}_{\delta}(\hxi,\het,\htt)}{\eps^2+\abs{\nabla_\delta^{\eta} u^{\eps}_{\delta}(\hxi,\het,\htt)}^2}   \Big)X^{\eta}_{i\delta}X^{\eta}_{j\delta} u^{\eps}_{\delta}(\het,\htt)
\\
&\leq \SUM_{i,j=1}^n\Big(\delta_{ij}-\frac{X_{i\delta}^{\xi} \phi(\hxi,\het,\htt) 
X_{j\delta}^{\xi} \phi(\hxi,\het,\htt)}{\eps^2+\abs{\nabla_\delta^{\xi} \phi(\hxi,\het,\htt)}^2}\Big)X^{\eta}_{i\delta}X^{\eta}_{j\delta} \phi(\het,\htt),
\end{split}
\end{align}
due to the fact that at the maximum point 
$-\nabla_\delta^{\eta} u^{\eps}_{\delta}(\het,\htt)=\nabla_\delta^{\eta}\phi(\hxi,\het,\htt),$
and  
\begin{equation}\label{inequalities}
- X^{\eta}_iX^{\eta}_j u_{\delta}^\eps(\het,\htt) \leq X^{\eta}_iX^{\eta}_j \phi(\het,\htt), \quad 
- X^{\eta}_iX^{\eta}_j u_{\delta}^\eps(\het,\htt)\leq Y_{ij}.
\end{equation}

Now we distinguish two cases: 
$\nabla_0^{\xi}\phi (\hxi,\het,\htt)\neq 0$ and 
$\nabla_0^{\xi}\phi (\hxi,\het,\htt)=0$. Let us first consider the case with $\nabla_0^{\xi}\phi (\hxi,\het,\htt)\neq 0$. 

We sum \eqref{eq:viscDefinitionInProof} and \eqref{eq:uesolutionDefinitionInProof}. Then we obtain
\begin{align}\label{eq:compactForm11}
\begin{split}
&\frac{M \eps^{\alpha}}{2T}= a+b
\\& \leq\SUM_{i,j=1}^m\Big( \delta_{ij}-\frac{X_i^{\xi}\phi X_j^{\xi}\phi}{\abs{\nabla_0^{\xi}\phi}^2} \Big)H_{ij}+
\SUM_{i,j=1}^n\Big( \delta_{ij}-
\frac{X_{i\delta}^{\eta}\phi X_{j\delta}^{\eta}\phi}{(\eps^2+\abs{\nabla_\delta^{\eta} \phi}^2) }\Big)
X^{\eta}_{i\delta}X^{\eta}_{j\delta}\phi
\\& \leq\SUM_{i,j=1}^m\Big( \delta_{ij}-\frac{X_i^{\xi}\phi X_j^{\xi}\phi}{\abs{\nabla_0^{\xi}\phi}^2} \Big)(H_{ij}+Y_{ij})+ 2\SUM_{i=1}^n\SUM_{j=m+1}^n\Big | \delta_{ij}-
\frac{X_{i\delta}^{\eta}\phi X_{j\delta}^{\eta}\phi}{(\eps^2+\abs{\nabla_\delta^{\eta} \phi}^2) }\Big |
X^{\eta}_{i\delta}X^{\eta}_{j\delta}\phi
\\& + \SUM_{i,j=1}^mX_i^{\xi}\phi X_j^{\xi}\phi\Big(\frac{ \eps^2 }{(\eps^2+\abs{\nabla_{\delta}^{\eta} \phi}^2)\abs{\nabla_0^{\xi}\phi}^2}\Big)X_i^{\eta}X_j^{\eta}\phi
\\ &
+\SUM_{i,j=1}^m X_i^{\xi}\phi X_j^{\xi}\phi\Big(\frac{\abs{\nabla_{\delta}^{\eta}\phi}^2-\abs{\nabla_0^{\eta}\phi}^2\, }{(\eps^2+\abs{\nabla_\delta^{\eta} \phi}^2)\abs{\nabla_0^{\xi}\phi}^2 }\Big)X^{\eta}_i X_j^{\eta}\phi
\\ &
+\SUM_{i,j=1}^m\Big(\frac{ X_i^{\xi}\phi X_j^{\xi}\phi (\abs{\nabla_0^{\eta}\phi}^2-\abs{\nabla_0^{\xi}\phi}^2)+\abs{\nabla_0^{\xi}\phi}^2 (X_i^{\xi}\phi X_j^{\xi}\phi-X_i^{\eta}\phi X_j^{\eta}\phi)  }{(\eps^2+\abs{\nabla_\delta^{\eta} \phi}^2)\abs{\nabla_0^{\xi}\phi}^2 }\Big) X^{\eta}_i X_j^{\eta}\phi .
\end{split}
\end{align}
Let us denote by $I_1,\,I_2,\,I_3,\,I_4,\,I_5$ the five terms appearing on the right hand side of the last inequality in \eqref{eq:compactForm11}, and let us 
consider one term at a time. 

We set $\beta=\delta=\eps^{\sigma}$, where $\sigma>0$ is to be determined in the sequel. Furthermore, we will employ the estimate which is found from \eqref{eq:distEstimateAbsol}: 
\begin{equation}\label{eq:estimateSimple_for_distance}
d_\beta^{\gamma}\leq \frac{2\tilde{C}\gamma}{\mu}\epsilon^{\frac{\gamma}{2}-1}.
\end{equation}
Finally we note that for any value of $M$ we can find an $\eps$ (which satisfies $0<\eps<1$) such that 
$$d_{\beta}(\hat{\xi},\hat{\eta})\leq 1.$$

We use \eqref{eq:HY_estimate} and observe that
\begin{align}\label{eq:HY_I1}
\begin{split}
I_1=
\SUM_{i,j=1}^m\Big( \delta_{ij}-\frac{X_i^{\xi}\phi X_j^{\xi}\phi}{\abs{\nabla_0^{\xi}\phi}^2} \Big) (H_{ij}+Y_{ij})\leq \tilde{K}\mu\eps^{1-\frac{\gamma}{2}}\beta d_{\beta}^{\gamma-2}.
\end{split}
\end{align}

We use \eqref{eq:estimateSimple_for_distance} and the definition of $\mu$ given by \eqref{eq:muExpression} in this expression. Then we find
\begin{align}\label{eq:HY_I1_2}
\begin{split}
\tilde{K}\mu\eps^{1-\frac{\gamma}{2}}\beta d_{\beta}^{\gamma-2}&\leq \tilde{K}(\mu^{\frac{1}{\gamma}}\eps^{(1-\frac{\gamma}{2})\frac{1}{\gamma}+\sigma}+\mu^{\frac{2}{\gamma}}\eps^{(1-\frac{\gamma}{2})\frac{2}{\gamma}+2\sigma})  \leq
\tilde{K}\mu^{\frac{2}{\gamma}}\eps^{(1-\frac{\gamma}{2})\frac{2}{\gamma}+\sigma}
,
\end{split}
\end{align}
which implies for $M>1$ that
\begin{equation}
I_1\leq\tilde{K}\mu^{\frac{2}{\gamma}}\eps^{(1-\frac{\gamma}{2})\frac{2}{\gamma}+\sigma}.
\end{equation}
Once we impose on $\sigma>0$, the condition given by
\begin{equation}\label{eq:sigma_condition_carnot_1}
(1-\frac{\gamma}{2})\frac{2}{\gamma}+\sigma \geq \alpha,
\end{equation}
results for $M>1$ in
\begin{equation}
I_1\leq \tilde{K}M^{-1}\eps^{\alpha}.
\end{equation}

We continue with the second term: $I_2$. We first notice that
\begin{equation}\label{equazioneI2}
I_2 \leq 2\SUM_{i=1}^{n} 
\SUM_{j=m+1}^{n}|X_{i\delta}^{\eta}X_{j\delta}^{\eta}\phi|  .
\end{equation}
Furthermore, by \eqref{derivateij12} and \eqref{derivateij22}, we have
\begin{align}\label{eq:threeInequalities}
|X_{i\delta}^{\eta}X_{j\delta}^{\eta}\phi| \leq\tilde{K}\mu\eps^{1-\frac{\gamma}{2}}(\delta^2 d_{\beta}^{\gamma-2}+ \beta^2\delta^2 d_\beta^{\gamma-4})
\tilde{K}\mu\eps^{1-\frac{\gamma}{2}}\delta^2 d_{\beta}^{\gamma-2}\leq \tilde{K}\mu\eps^{1-\frac{\gamma}{2}}\delta d_{\beta}^{\gamma-2}
\end{align}
if at least one of $i$ and $j$ is of degree 2. 
We employ \eqref{eq:muExpression} and \eqref{eq:estimateSimple_for_distance} to find 
\begin{equation}
I_2\leq \mu^{\frac{2}{\gamma}}\eps^{(1-\frac{\gamma}{2})\frac{2}{\gamma}+\sigma}
\end{equation}
which implies for $M>1$ and the values of $\sigma$ satisfying \eqref{eq:sigma_condition_carnot_1} that
\begin{equation}
I_2\leq \tilde{K}M^{-1}\eps^{\alpha}.
\end{equation}

Now we consider $I_3$. First we observe that
\begin{equation}
 I_3\leq \SUM_{i,j=1}^m\frac{\eps^2\,\abs{X_i^{\eta}X_j^{\eta}\phi}}{(\eps^2+\abs{\nabla_{\delta}^{\eta} \phi}^2)}.
 \end{equation} 
Then we employ the estimates given in \eqref{remarknabla} and  \eqref{rem:secondOrderEst1} in order to find
\begin{align}\label{eq:YoungInq_based_starts}
\begin{split}
\SUM_{i,j=1}^m\frac{\eps^2\,X_i^{\eta}X_j^{\eta}\phi}{(\eps^2+\abs{\nabla_{\delta}^{\eta} \phi}^2)}\leq & \frac{\tilde{K}(\gamma-2)\gamma\mu\eps^{3-\frac{\gamma}{2}} d_{\beta}^{\gamma-2}}{\eps^2+\gamma^2\mu^2\eps^{2-\gamma} d_\beta^{2\gamma-4}
(d_0^2 + \beta^2 \delta^2 d_3^2 )}\leq 
\frac{\tilde{K}(\gamma-2)\gamma\mu\eps^{3-\frac{\gamma}{2}} d_0^{\gamma-2} + 
(\gamma-2)\gamma\mu\eps^{3-\frac{\gamma}{2}} d_3^{\gamma-2}
}{\eps^2+\gamma^2\mu^2\eps^{2-\gamma} d_0^{2\gamma-2} + \gamma^2\mu^2\eps^{2-\gamma} \beta^2 \delta^2 d_3^{2\gamma-2}}.
\end{split}
\end{align}
We may write the first term in the numerator as follows: 
\begin{equation}
(\gamma-2)\gamma\mu\eps^{3-\frac{\gamma}{2}}\gamma d_0^{\gamma-2} =
(\gamma-2)\gamma\mu^{\frac{1}{\gamma-1}}\eps^{\frac{\gamma-2}{2(\gamma-1)}}
\eps^{\frac{\gamma}{\gamma-1}}\mu^{\frac{\gamma-2}{\gamma-1}}\eps^{-\frac{(\gamma-2)^2}{2(\gamma-1)}}d_0^{\gamma-2}.
\end{equation}
Then we employ Young's inequality $mn\leq \frac{1}{p}m^p+\frac{1}{q}n^q$  with $p=\frac{2(\gamma-1)}{\gamma}$ and $q=\frac{2(\gamma-1)}{\gamma-2}$, which results in
$$
(\gamma-2)\gamma\mu\eps^{3-\frac{\gamma}{2}}\gamma d_0^{\gamma-2} \leq 
(\gamma-2)\gamma\mu^{\frac{1}{\gamma-1}}\eps^{\frac{\gamma-2}{2(\gamma-1)}}
\Big(\frac{\gamma}{2(\gamma-1)}\eps^2+\frac{\gamma-2}{2(\gamma-1)}\mu^2\eps^{2-\gamma}d_0^{2\gamma-2}
\Big) $$
\begin{equation}\label{pezzo1}
\leq \tilde{K}(\gamma-2)\gamma\mu^{\frac{1}{\gamma-1}}\eps^{\frac{\gamma-2}{2(\gamma-1)}}
\Big(\frac{1}{2}\eps^2+\mu^2\eps^{2-\gamma}d_0^{2\gamma-2}
\Big).
\end{equation}
The second term in the numerator can be handled in the same way:
$$(\gamma-2)\gamma\mu\eps^{3-\frac{\gamma}{2}}\gamma d_3^{\gamma-2}
=(\gamma-2)\gamma\mu^{\frac{1}{\gamma-1}}\eps^{\frac{(\gamma-2)(1- 4\sigma )}{2(\gamma-1)}  }
\eps^{\frac{\gamma}{\gamma-1}}\mu^{\frac{\gamma-2}{\gamma-1}}\eps^{-\frac{(\gamma-2)(\gamma-2 - 4\sigma)}{2(\gamma-1)}}d_3^{\gamma-2}
$$
\begin{equation}\label{pezzo2}
\leq \tilde{K}(\gamma-2)\gamma\mu^{\frac{1}{\gamma-1}}
\eps^{\frac{(\gamma-2)(1- 4\sigma )}{2(\gamma-1)}  }
\Big(\frac{1}{2}\eps^2+\mu^2\beta^2\delta^2\eps^{2-\gamma}d_3^{2\gamma-2}
\Big).
\end{equation}

We sum up \eqref{pezzo1} and \eqref{pezzo2} and obtain
\begin{align}
\SUM_{i,j=1}^m\frac{\eps^2\,X_i^{\eta}X_j^{\eta}\phi}{(\eps^2+\abs{\nabla_{\delta}^{\eta} \phi}^2)} \leq \tilde{K}\mu^{\frac{1}{\gamma-1}}
\eps^{\frac{(\gamma-2)(1- 4\sigma )}{2(\gamma-1)}   }.
\end{align}
Furthermore we use \eqref{eq:muExpression} and find
\begin{align}
\tilde{K}\mu^{\frac{1}{\gamma-1}}
\eps^{\frac{(\gamma-2)(1- 4\sigma )}{2(\gamma-1)}   } \leq & \tilde{K} \Big(\frac{\gamma 4^{\gamma}\operatorname{Lip}(u_0)^{\gamma}}{M^{\gamma-1}}\Big)^{\frac{1}{\gamma-1}}\eps^{\frac{(\gamma-2)(1- 4\sigma )}{2(\gamma-1)}},
\end{align}
which implies for $M>1$ and the values of $\sigma$ satisfying \eqref{eq:sigma_condition_carnot_1} the following:
\begin{equation}\label{eq:YoungInq_based_ends}
I_3\leq \tilde{K} \eps^{\frac{(\gamma-2)(1- 4\sigma )}{2(\gamma-1)}  }M^{-1} = \tilde K M^{-1} \epsilon^\alpha 
\end{equation}
if 
\begin{equation}\label{eq:sigma_condition_carnot_3}
\alpha = \frac{(\gamma-2)(1- 4\sigma )}{2(\gamma-1)}.
\end{equation}

Now let us tackle $I_4$ and $I_5$. Note that
by \eqref{firstderivphi}
\begin{equation}\label{eq:carnot_gradient_equal}
\Big|\abs{\nabla^{\eta}_0\phi}^2- \abs{\nabla^{\eta}_0\phi}^2\Big|\leq 
\sum_{i=m+1}^n |X_i^\eta\phi|^2\leq 
\delta^2	\tilde{K}\mu^2  \eps^{2-\gamma} d_\beta^{2\gamma-4}d^2_0\leq \tilde K\delta^2 \abs{\nabla^{\eta}_0\phi}^2,
\end{equation}
hence
\begin{equation}\label{eq:startL_3}
I_4\leq \tilde K \delta^2\SUM_{i,j=1}^m X^{\eta}_iX^{\eta}_j\phi . 
\end{equation}

Now, using \eqref{firstderivphi} and \eqref{remarknabla}
we obtain
$$\Big| X_i^{\xi}\phi X_j^{\xi}\phi (\abs{\nabla_0^{\eta}\phi}^2-\abs{\nabla_0^{\xi}\phi}^2)+\abs{\nabla_0^{\xi}\phi}^2 (X_i^{\xi}\phi X_j^{\xi}\phi-X_i^{\eta}\phi X_j^{\eta}\phi)\Big|\leq 
 $$
$$\abs{\nabla_0^{\xi}\phi}^2\sum_{i,j=1}^m(X_j^{\xi}\phi+ X_j^{\eta}\phi) (X_i^{\xi}\phi -X_i^{\eta}\phi)\leq \abs{\nabla_0^{\xi}\phi}^2 
\beta \tilde{K}^2\mu^2  \eps^{2-\gamma} d_\beta^{2\gamma-4}d^2_0\leq 
\beta  \abs{\nabla_0^{\xi}\phi}^2  \abs{\nabla_0^{\eta}\phi}^2, 
$$
As a consequence 
$$I_5 \leq \tilde K \beta \SUM_{i,j=1}^m X^{\eta}_iX^{\eta}_j\phi.$$

At this point we use the estimates given in \eqref{remarkderivative}, \eqref{rem:secondOrderEst1} and find for $M>1$ that
\begin{align}
\begin{split}
I_4 + I_5\leq & (\delta + \beta)\tilde{K}\mu\eps^{1-\frac{\gamma}{2}}d_{\beta}^{\gamma-2}.
\end{split}
\end{align}
This term is clearly similar to $I_2$. We see that for the values of $\sigma$ which satisfy \eqref{eq:sigma_condition_carnot_1} we have
\begin{equation}
I_4 + I_5\leq \tilde {K}M^{-1}\eps^{\alpha}.
\end{equation}

Finally we have proved  that 
\begin{equation}\label{eq:contradiction_nonzero}
\frac{M \epsilon^\alpha}{2T} \leq I_1+I_2+I_3+I_4+I_5\leq \tilde{K}M^{-1} \epsilon^\alpha,
\end{equation}
where $\sigma$ fulfills the condition given in \eqref{eq:sigma_condition_carnot_1} and $\alpha>0$ is defined as in \eqref{eq:sigma_condition_carnot_3}. This result is a contradiction for large values of $M$, proving the assertion of the theorem if the gradient of $\phi$ does not vanish at the maximum point $(\hat{\xi}, \hat{\eta},\hat{t})$.

Now we look at the case with $\nabla_0^{\xi}\phi(\hxi,\het,\htt)=0$. In this case we can write \eqref{eq:compactForm11} by using \eqref{eq:viscDefinitionInProof} and Lemma \ref{lem:zero_gradient_implies} as
\begin{align}\label{eq:contr_disuguglianza}
\begin{split}
\frac{M}{2T}\eps^{\alpha}=  a+b 
\leq  \SUM_{i,j=1}^m \delta_{ij}({\color{black}H}_{ij}+Y_{ij})+2\SUM_{i=1}^n\SUM_{j=m+1}^n\Big( \delta_{ij}-
\frac{X_{i\delta}^{\eta}\phi X_{j\delta}^{\eta}\phi}{(\eps^2+\abs{X_{3\delta}^{\eta} \phi}^2) }\Big)
X^{\eta}_{i\delta}X^{\eta}_{j\delta}\phi.
\end{split}
\end{align}

The first and second terms on the right hand side of the final inequality can be tackled exactly in the same way employed in the previous non-zero gradient case of $I_1$ and $I_2$, respectively. Then the contradiction follows from \eqref{eq:contradiction_nonzero} as before.

We have obtained the contradiction for both cases with vanishing and non-vanishing horizontal gradients. The proof is complete. 
\end{proof}

\begin{remark}
The comparison principle given in Theorem \ref{thm:comparisonForVanishingViscosity} was valid only for the vanishing viscosity solutions. Now we have also the comparison principle for the viscosity solutions both in Carnot groups of step 2 and $\se$, as given in Corollary \ref{thm:comparisonPrincipleViscosity}. It follows directly from Theorem \ref{thm:univiscosityTheorem}.
\end{remark}

\bibliographystyle{siam}
\bibliography{uniquenessPreprintBib}

\end{document}